\crefname{equation}{}{}
\crefname{lem}{Lemma}{Lemmas}
\crefname{thm}{Theorem}{Theorems}
\DeclareMathOperator{\D}{D}
\DeclareMathOperator{\I}{I}
\newcommand{\dual}[1]{\left\langle {#1} \right\rangle}
\newcommand{\jmp}[1]{{[\![ {#1} ]\!]}}
\newcommand{\nm}[1]{\left\lVert {#1} \right\rVert}
\newcommand{\snm}[1]{\left\lvert {#1} \right\rvert}
\newcommand{\ssnm}[1]
{
  \left\vert\kern-0.25ex
  \left\vert\kern-0.25ex
  \left\vert
  {#1}
  \right\vert\kern-0.25ex
  \right\vert\kern-0.25ex
  \right\vert
}
\numberwithin{equation}{section}
\newtheorem{Def}{Definition}[section]
\newtheorem{lem}{Lemma}[section]
\newtheorem{rem}{Remark}[section]
\newtheorem{thm}{Theorem}[section]
\begin{document}

\title{
  \Large \bf A space-time finite element method for fractional wave problems
\thanks
{
	This work was supported in part by National Natural Science Foundation
	of China (11771312).
}}
\author{
	Binjie Li \thanks{Email: libinjie@scu.edu.cn},
	Hao Luo \thanks{Corresponding author. Email: galeolev@foxmail.com},
	Xiaoping Xie \thanks{Email: xpxie@scu.edu.cn} \\
	{School of Mathematics, Sichuan University, Chengdu 610064, China}
}

\date{}
\maketitle

\begin{abstract}
  This paper analyzes a space-time finite element method for fractional wave
  problems.  The method uses a Petrov-Galerkin type time-stepping scheme to discretize
  the time fractional derivative of order $ \gamma $ ($1<\gamma<2$). We
  establish the stability of this method, and derive  the optimal
  convergence in the $ H^1(0,T;L^2(\Omega)) $-norm and suboptimal convergence in
  the discrete $ L^\infty(0,T;H_0^1(\Omega)) $-norm. Furthermore, we discuss the
  performance of this method in the case that the solution has singularity at $
  t= 0 $, and show that optimal convergence rate with respect to the $
  H^1(0,T;L^2(\Omega)) $-norm can still be achieved by using   graded grids in
  the time discretization. Finally, numerical experiments are performed to
  verify the theoretical results.


\end{abstract}

\medskip\noindent{\bf Keywords:} fractional wave problem, space-time finite element,
convergence, graded grid.

\section{Introduction}
This paper considers the following fractional wave problem:
\begin{equation}
  \label{eq:model}
  \left\{
    \begin{aligned}
      \D_{0+}^\gamma (u-u_0-tu_1) - \Delta u &= f &&
      \text{in $ \Omega \times (0,T) $,} \\
      u &= 0 &&
      \text{on $ \partial\Omega \times (0,T) $,} \\
      u(\cdot,0) &= u_0 &&
      \text{in $ \Omega $,} \\
      u_t(\cdot,0) &= u_1 &&
      \text{in $ \Omega $,}
    \end{aligned}
  \right.
\end{equation}
where $ 1 < \gamma < 2 $, $ \Omega \subset \mathbb R^d $ ($d=2,3$) is a
polygon/polyhedron, and $ u_0 $, $ u_1 $ and $ f $ are given functions. Here $
u_t $ is the derivative of $ u $ with respect to the time variable $ t $, and $
\D_{0+}^\gamma $ is a Riemann-Liouville fractional differential operator of
order $ \gamma $.

In the last two decades, the numerical treatment to time fractional
diffusion-wave partial differential equations has been an active research area.
The main difference of these numerical methods is how to discretize the fractional
derivatives. So far, there are three approaches to discretize the fractional
derivatives: the finite difference method, the spectral method, and the finite
element method. For the first class of algorithms that use the finite difference
method to discretize the fractional derivatives, we refer the reader to
\cite{Meerschaert2004Finite,Yuste2005,Yuste2006,sun2006fully,Liu2011,Gao2011,Cao2013A,Huang2013Two,Wang2014Compact,Ren2017}
and the references therein. These algorithms are easy to implement, but   are
generally of low temporal accuracy. For the second class of algorithms that use the
spectral method to discretize the fractional derivatives, we refer the reader to
\cite{Li2009,Zayernouri2014Fractional,Zayernouri2012Karniadakis,Zayernouri2014Exponentially,Zheng2015,yang2016spectral,Li2017A}.
These algorithms have high-order accuracy if the solution is sufficiently
regular. Since singularity is an important feature of time fractional
diffusion-wave problems, the high-order accuracy of these algorithms is limited.
Besides, the algorithms often lead to large scale dense systems to solve. For the third
class of algorithms that use the finite element method to discretize fractional
derivatives, we refer the reader to
\cite{Mustapha2009Discontinuous,Mustapha2012Uniform,Mustapha2012Superconvergence,Mclean2015Time,mustapha2014well-posedness,Mustapha2014A,Mustapha2015Time}.
Similar to the first class of algorithms, the discrete systems arising from these
algorithms are solved successively in the time direction. Furthermore, these
algorithms possess high-order accuracy, and if the solution has singularity,
these algorithms can also have high-order accuracy by using graded grids in the
time discretization.

Due to the nonlocal property of fractional derivatives, the history information
has to be stored to compute the solution at each stage
\cite{Zhao2014Short,Ford2001The}. Hence the storage and computing cost to solve
a time fractional wave problem is significantly more expensive than that to
solve a standard wave problem. A natural idea is to develop high-order temporal
accuracy algorithms. However, it is well known that time fractional wave
problems generally have singularity at $ t=0 $, despite how regular the initial
  and boundary data are. This makes developing high-order accuracy
algorithms more challenging. As mentioned earlier, the finite difference methods
generally only have low temporal accuracy. Besides, the high-order accuracy of
the spectral method is limited by the singularity of the time fractional wave
problems. This motivates us to develop high-order accuracy methods that can also
tackle the singularity at $ t=0 $.

In this paper, we propose a space-time finite element method for the fractional wave
problem \eqref{eq:model}. This method employs a Petrov-Galerkin type time-stepping scheme to
discretize the fractional derivative, which uses   continuous piecewise
polynomials (of degree $\leqslant m$) as trial functions and   totally
discontinuous piecewise polynomials (of degree $\leqslant m-1 $) as test functions. 
We
establish the stability of this method and derive two a priori error estimates
under a reasonable regularity assumption on the solution. The estimates 
show that the proposed method possesses temporal accuracy order $ m $ in the $
H^1(0,T;L^2(\Omega)) $-norm and temporal accuracy order $ m-1/2 $
($m\geqslant2$) in the discrete $ L^\infty(0,T;H_0^1(\Omega)) $-norm, provided
the solution is sufficiently regular. Furthermore, we use the two estimates to
analyze the convergence rates of this method in the case that the solution has
singularity at $ t=0 $, indicating that using suitable graded grids in the time
discretization can still achieve temporal accuracy order $ m $ in the $
H^1(0,T;L^2(\Omega)) $-norm. Finally, we note that our analysis is quite
different from that of the aforementioned third class of algorithms with    the finite element discretization for the fractional
derivatives, and
the techniques developed in this paper can also be used to analyze other time
fractional diffusion-wave problems.


The rest of this paper is organized as follows. \cref{sec:pre} introduces some
vector valued spaces, the Riemann-Liouville fractional calculus operators, and
the weak form to problem \cref{eq:model}. \cref{sec:discr} describes a space-time finite element method, and \cref{sec:main} investigates its stability and
convergence. \cref{sec:numer} performs some numerical experiments
to verify the theoretical results.


\section{Preliminaries}
\label{sec:pre}
We first introduce some vector valued spaces. Let $ X $ be a separable Hilbert
space with an inner product $ (\cdot,\cdot)_X $ and an orthonormal basis $ \{
  e_j:\ j \in \mathbb N \} $, and let $ \mathcal O = (a,b) $ be an interval. For
  $ 0 < \alpha < \infty $, define
\[
  H^\alpha(\mathcal O;X) := \left\{
    v \in L^2(\mathcal O;X):\
    \sum_{j=0}^\infty \nm{(v,e_j)_X}_{H^\alpha(\mathcal O)}^2 < \infty
  \right\}
\]
and endow this space with the norm
\[
  \nm{\cdot}_{H^\alpha(\mathcal O; X)} := \left(
    \sum_{j=0}^\infty \nm{(\cdot,e_j)_X}_{H^\alpha(\mathcal O)}^2
  \right)^{1/2},
\]
where $ L^2(\mathcal O;X) $ is an $ X $-valued Bochner $L^2$ space. If $ 0 <
\alpha < 1/2 $, we also introduce the following two norms:
\begin{align*}
  \snm{v}_{H^\alpha(\mathcal O;X)} &:= \left(
    \sum_{j=0}^\infty \snm{(v,e_j)_X}_{H^\alpha(\mathcal O)}^2
  \right)^{1/2}, \\
  \ssnm{v}_{H^\alpha(\mathcal O;X)} &:=
  \inf_{
    \substack{
      \widetilde v \in H^\alpha((-\infty,b);X) \\
      \widetilde v|_{\mathcal O} = v
    }
  } \snm{\widetilde v}_{H^\alpha((-\infty,b);X)},
\end{align*}
for all $ v \in H^\alpha(\mathcal O;X) $. Here, $ H^\alpha(\mathcal O) $ is a
standard Sobolev space (see \cite{Tartar2007}), and
\[
  \snm{v}_{H^\alpha(\mathcal O)} := \left(
    \int_\mathbb R \snm{\xi}^{2\alpha}
    \snm{\mathcal F(v\chi_{\mathcal O})(\xi)}^2 \,\mathrm{d}\xi
  \right)^{1/2}
\]
for each $ v \in H^\alpha(\mathcal O) $ with $ 0 < \alpha < 1/2 $, where $
\mathcal F: L^2(\mathbb R) \to L^2(\mathbb R) $ is the Fourier transform
operator and $ \chi_{\mathcal O} $ is the indicator function of the interval $
\mathcal O $. Moreover, for $ 0 < \alpha < 1/2 $, we use $ H^{-\alpha}(\mathcal
O;X) $ to denote the dual space of $ H^{\alpha}(\mathcal O; X) $, where $
H^\alpha(\mathcal O;X) $ is endowed with the norm $
\snm{\cdot}_{H^{\gamma_0}(\mathcal O;X)} $. For $ v \in H^i(\mathcal O;X) $ with
$ i \in \mathbb N_{>0} $, we use $ v^{(i)} $ to denote its $ i $th weak
derivative, and $ v^{(1)} $ and $ v^{(2)} $ are abbreviated to $ v' $ and $ v''
$, respectively.

Additionally, for $ 0 \leqslant \delta < 1 $, define
\[
  L_\delta^2(\mathcal O;X) := \left\{
    v \in L^1(\mathcal O;X) :\
    \nm{v}_{L_\delta^2(\mathcal O;X)} < \infty
  \right\},
\]
where
\[
  \nm{v}_{L_\delta^2(\mathcal O;X)} := \left(
    \int_\mathcal O \snm{t}^\delta \nm{v(t)}_X^2 \, \mathrm{d}t
  \right)^{1/2}.
\]
Conventionally, $ C(\mathcal O;X) $ is the set of all $ X $-valued continuous
functions defined on $ \mathcal O $, and $ P_j(\mathcal O;X) $ is the set of all
$ X $-valued polynomials defined on $ \mathcal O $ of degree $ \leqslant j $.
For convenience, $ \nm{\cdot}_{L_\delta^2(\mathcal O;\mathbb R)} $ and $
P_j(\mathcal O;\mathbb R) $ are abbreviated to $ \nm{\cdot}_{L_\delta^2(\mathcal
O)} $ and $ P_j(\mathcal O) $, respectively.

Now we introduce the Riemann-Liouville fractional calculus operators. Let $ X $
be a Banach space and let $ -\infty \leqslant a < b \leqslant \infty $.
\begin{Def}
  For $ 0 < \alpha < \infty $, define
  \begin{align*}
    \left(\I_{a+}^{\alpha,X} v\right)(t) &:=
    \frac1{ \Gamma(\alpha) }
    \int_a^t (t-s)^{\alpha-1} v(s) \, \mathrm{d}s, \quad t\in(a,b), \\
    \left(\I_{b-}^{\alpha,X} v\right)(t) &:=
    \frac1{ \Gamma(\alpha) }
    \int_t^b (s-t)^{\alpha-1} v(s) \, \mathrm{d}s, \quad t\in(a,b),
  \end{align*}
  for all $ v \in L^1(a,b;X) $, where $ \Gamma(\cdot) $ is the gamma function.
\end{Def}

\begin{Def}
  For $ j-1 < \alpha < j $ with $ j \in \mathbb N_{>0} $, define
  \begin{align*}
    \D_{a+}^{\alpha,X} & := \D^j \I_{a+}^{j-\alpha,X}, \\
    \D_{b-}^{\alpha,X} & := (-1)^j \D^j \I_{b-}^{j-\alpha,X},
  \end{align*}
  where $ \D $ is the first-order differential operator in the distribution
  sense.
\end{Def}
\noindent Above $ L^1(a,b;X) $ is a standard $ X $-valued Bochner $ L^1 $ space.
For convenience, we shall simply use $ \I_{a+}^\alpha $, $ \I_{b-}^\alpha $, $
\D_{a+}^\alpha $ and $ \D_{b-}^\alpha $, without indicating the underlying
Banach space $ X $.

Finally, let us define the weak solution to problem \cref{eq:model}. Throughout
this paper, we assume that $ u_0 \in H_0^1(\Omega) $, $ u_1 \in L^2(\Omega) $,
and $ f \in H^{-\gamma_0}(0,T;L^2(\Omega)) $, where $ \gamma_0 = (\gamma-1)/2 $
and $ H^{-\gamma_0}(0,T;L^2(\Omega)) $. We call
\[
  u \in H^{1+\gamma_0}(0,T;L^2(\Omega)) \cap L^2(0,T;H_0^1(\Omega))
\]
a weak solution to problem \cref{eq:model}, if $ u(0) = u_0 $ and
\begin{equation}
  \label{eq:weak_sol}
  \begin{aligned}
    & \dual{\D_{0+}^\gamma(u-u_0-tu_1), v}_{H^{\gamma_0}(0,T;L^2(\Omega))} +
    \dual{\nabla u, \nabla v}_{\Omega \times (0,T)} \\
    =& \dual{f,v}_{H^{\gamma_0}(0,T;L^2(\Omega))}
  \end{aligned}
\end{equation}
for all $ v \in H^{\gamma_0}(0,T;L^2(\Omega)) \cap L^2(0,T;H_0^1(\Omega)) $.
Above and throughout, if $ D $ is a Lebesgue measurable set of $ \mathbb R^l $
($ l= 1,2,3,4 $) then the symbol $ \dual{p,q}_D $ means $ \int_D pq $, and if $
X $ is a Banach space then $ \dual{\cdot,\cdot}_X $ means the duality pairing
between $ X^* $ (the dual space of $ X $) and $ X $.



\section{Discretization}
\label{sec:discr}
For $ \sigma \geqslant 1 $ and $ J \in \mathbb N_{>0} $, define
\[
  t_j :=(j/J)^\sigma T \quad\text{ for all } 0 \leqslant j \leqslant J,
\]
and we use $ \tau $ to abbreviate $ \tau_J $. when For each $ 1 \leqslant j \leqslant
J $, set $ \tau_j := t_j-t_{j-1} $ and $ I_j := (t_{j-1},t_j) $. Notice that $\bigcup\{I_j\} $ is   an equidistributed grid if  $ \sigma =1 $ and a graded grid  if  $ \sigma >1 $.  Let $ \mathcal
K_h $ be a triangulation of $ \Omega $ consisting of $ d $-simplexes, and we use
$ h $ to denote the maximum diameter of the elements in $ \mathcal K_h $. Define
\begin{align*}
  S_h&:= \left\{
    v_h \in H_0^1(\Omega):\
    v_h|_K \in P_n(K),\ \forall \,K \in \mathcal K_h
  \right\},\\
  \Sigma_h &:= \left\{
    v_h \in H^1(\Omega):\
    v_h|_K \in P_n(K),\ \forall \,K \in \mathcal K_h
  \right\}, \\
  M_{h,\tau} &:= \left\{
    V \in H^1(0,T;S_h):\
    V|_{I_j} \in P_m(I_j;S_h),\ \forall\,1\leqslant j\leqslant J
  \right\}, \\
  W_{h,\tau} &:= \left\{
    V \in L^2(0,T;S_h):\
    V|_{I_j} \in P_{m-1}(I_j;S_h),\ \forall\,1\leqslant j\leqslant J
  \right\},
\end{align*}
where $ m, n \in \mathbb N_{>0} $.

Now, inspired by the weak formulation \cref{eq:weak_sol}, we construct a
space time finite element method as follows: seek $ U \in M_{h,\tau} $ such that
$ U(0) = R_h u_0 $ and
\begin{equation}
  \label{eq:algo}
  \begin{aligned}
    & \dual{\D_{0+}^\gamma (U-U(0)-t\Pi_hu_1),V}_{H^{\gamma_0}(0,T;L^2(\Omega))} +
    \dual{\nabla U,\nabla V}_{\Omega \times (0,T)} \\
    = & \dual{f,V}_{H^{\gamma_0}(0,T;L^2(\Omega))}
  \end{aligned}
\end{equation}
for all $ V \in W_{h,\tau} $, where $ \Pi_h $ is the $ L^2(\Omega) $-orthogonal
projection operator onto $ \Sigma_{h} $, and $ R_h: H_0^1(\Omega) \to
S_h $ is defined by
\[
  \dual{\nabla (v-R_hv), \nabla v_h}_\Omega = 0,
  \quad \forall\,v \in H_0^1(\Omega), \; \forall\,v_h \in S_h.
\]

\begin{rem}
  Given $ V \in M_{h,\tau} $, a straightforward calculation yields that
  \begin{align*}
    &
    \dual{ \D_{0+}^\gamma \big( V-V(0) \big), \varphi } =
    \dual{\I_{0+}^{2-\gamma} (V-V(0)), \varphi''}_{(0,T)} =
    \dual{V-V(0), \I_{T-}^{2-\gamma} \varphi''}_{(0,T)} \\
    = &
    \dual{
      V-V(0), \left( \I_{T-}^{2-\gamma} \varphi\right )''
    }_{(0,T)} =
    -\dual{
      V', \left( \I_{T-}^{2-\gamma} \varphi\right )'
    }_{(0,T)} \\
    = &
    \sum_{j=1}^J \dual{V'', \I_{T-}^{2-\gamma} \varphi}_{I_j} +
    \sum_{j=1}^{J-1} \jmp{V_j'} \I_{T-}^{2-\gamma}\varphi(t_j)
  \end{align*}
  for any $ \varphi \in C_0^\infty(0,T) $, where $ \jmp{V_j'} := \lim_{t \to
  t_{j+}} V'(t) - \lim_{t \to t_{j-}} V'(t) $. Setting $ Z \in L^1((0,T); S_h) $
  by
  \[
    Z|_{I_j} := (V|_{I_j})'' \quad
    \text{ for all } 1 \leqslant j \leqslant J,
  \]
  we obtain
  \[
    \sum_{j=1}^J \dual{V'', \I_{T-}^{2-\gamma} \varphi}_{I_j} =
    \dual{Z, \I_{T-}^{2-\gamma} \varphi}_{(0,T)} =
    \dual{\I_{0+}^{2-\gamma} Z, \varphi}_{(0,T)}.
  \]
  Additionally, a simple computing gives
  \[
    \sum_{j=1}^{J-1} \jmp{V_j'} \I_{T-}^{2-\gamma} \varphi (t_j) =
    \dual{\sum_{j=1}^{J-1} \jmp{V_j'} \omega_j, \varphi}_{(0,T)},
  \]
  where
  \[
    \omega_j :=
    \begin{cases}
      0 & \text{ if } 0 < t < t_j, \\
      \frac{(t-t_j)^{1-\gamma}}{\Gamma(2-\gamma)}
      & \text{ if } t_j < t < T.
    \end{cases}
  \]
  Consequently,
  \[
    \dual{ \D_{0+}^\gamma \big(V-V(0)\big), \varphi } =
    \dual{
      \I_{0+}^{2-\gamma} Z + \sum_{j=1}^{J-1} \jmp{V_j'} \omega_j, \varphi
    }_{(0,T)}
  \]
  for all $ \varphi \in C_0^\infty(0,T) $, which indicates
  \[
    \D_{0+}^\gamma \big( V-V(0) \big) = \I_{0+}^{2-\gamma} Z +
    \sum_{j=1}^{J-1} \jmp{V_j'} \omega_j.
  \]
\end{rem}
\begin{rem}
  Since $ W_{h,\tau} $ is totally discontinuous, we can solve $ U|_{I_j} $
  successively from $ j = 1 $ to $ j = J $.
\end{rem}


\section{Stability and Convergence}
\label{sec:main}
For convenience, $ a \lesssim b $ means that there exists a positive constant $
C $, depending only on $ \gamma $, $ T $, $ m $, $ n $, or the shape regularity
of $ \mathcal K_h $, such that $ a \leqslant C b $, and $ a \sim b $ means $ a
\lesssim b \lesssim a $. Moreover, if the symbol $ C $ has subscript(s), then it
means a positive constant that depends only its subscript(s) unless otherwise
stated, and its value may differ at each of its occurrence(s). For example, $
C_{\gamma,T} $ is a positive constant that depends only on $ \gamma $ and $ T $,
and its value may differ at different places.

\subsection{Two Interpolation Operators}
Let $ X $ be a separable Hilbert space. For each $ v \in C((0,T];X) $, define $
P_\tau^X v $ by
\[
  \left\{
    \begin{aligned}
      & \left( P_\tau^Xv \right) \! \big|_{I_j} \in P_{m-1}(I_j;X),
      \quad \lim_{t\to t_{j}^-} \left( P_\tau^Xv \right)(t) = v(t_j), \\
      & \dual{ v- P_\tau^X v, q }_{I_j} = 0
      \quad \text{ for all } q \in P_{m-2}(I_j),
    \end{aligned}
  \right.
\]
for all $ 1\leqslant j \leqslant J $, where $ P_{-1}(I) := \emptyset $ in the
case of $ m = 1 $. For any $ v \in H^{1+\gamma_0}(0,T;X) $, define $ Q_\tau^X v
\in C([0,T];X) $ by
\[
  \left\{
    \begin{aligned}
      & (Q_\tau^X v)(0) = v(0), && \\
      & \left( Q_\tau^X v\right)|_{I_j} \in P_m(I_j;X) &&
      \text{ for all } 1\leqslant j \leqslant J,\\
      & \dual{
      \D_{0+}^{2\gamma_0} \left( v-Q_\tau^X v \right)',
      w_\tau
    }_{H^{\gamma_0}(0,T)} = 0 && \text{ for all } w_\tau \in W_\tau,
  \end{aligned}
  \right.
\]
where
\[
  W_\tau := \left\{
    w_\tau \in L^2(0,T):\
    w_\tau|_{I_j} \in P_{m-1}(I_j),\,\forall\,1\leqslant j \leqslant J
  \right\}.
\]
For simplicity, we shall suppress the superscript $ X $ of $ Q_\tau^X $ and $
P_\tau^X $ when no confusion will arise.

\begin{rem}
  Clearly, \cref{lem:coer} implies that $ Q_\tau^X $ is well-defined.
\end{rem}

\begin{lem}[\cite{Thomee1997}]
  \label{lem:ptau}
  If $ 1\leqslant j\leqslant J $ and $ v\in H^{m}(I_j) $, then
  \[
    \nm{v-P_\tau v}_{L^2(I_j)} + \tau_j\nm{v-P_\tau v}_{H^1(I_j)}
    \lesssim \tau_j^{m}\nm{v}_{H^{m}(I_j)}.
  \]
\end{lem}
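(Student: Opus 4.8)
The plan is to reduce the estimate to a reference interval by scaling and then apply a Bramble--Hilbert (Deny--Lions) argument. Fix $j$, apply the affine change of variables $t=t_{j-1}+\tau_j\hat t$ carrying $I_j$ onto $\hat I:=(0,1)$, and set $\hat v(\hat t):=v(t_{j-1}+\tau_j\hat t)$. The three conditions defining $P_\tau$ on $I_j$ --- membership in $P_{m-1}$, matching $v$ at the right endpoint, and vanishing of the moments against $P_{m-2}$ --- are each invariant under this map, so the transform of $P_\tau v$ coincides with $\widehat P\hat v$, where $\widehat P$ denotes the reference counterpart of $P_\tau$. It therefore suffices to prove the scale-free estimate $\nm{\hat v-\widehat P\hat v}_{H^1(\hat I)}\lesssim\snm{\hat v}_{H^m(\hat I)}$ and then transform back.

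To set up Bramble--Hilbert I would first verify that $\widehat P$ is a well-defined, bounded linear operator on $H^m(\hat I)$. Its $m$ defining conditions are unisolvent on $P_{m-1}(\hat I)$: any $p\in P_{m-1}(\hat I)$ with $p(1)=0$ factors as $p(\hat t)=(\hat t-1)r(\hat t)$ with $r\in P_{m-2}(\hat I)$, and testing the moment condition with $q=r$ gives $\int_0^1(\hat t-1)\,r(\hat t)^2\,\mathrm{d}\hat t=0$, which forces $r\equiv0$ since $\hat t-1<0$ on $(0,1)$. The data feeding these conditions --- the point value $\hat v(1)$, controlled via $H^1(\hat I)\hookrightarrow C(\overline{\hat I})$, and the $L^2$ moments of $\hat v$ --- are bounded functionals on $H^m(\hat I)$, so $\widehat P\colon H^m(\hat I)\to H^1(\hat I)$ is bounded. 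Moreover $\widehat P$ reproduces $P_{m-1}(\hat I)$, since such a polynomial trivially satisfies all the defining conditions. Hence $I-\widehat P$ is bounded on $H^m(\hat I)$ and annihilates $P_{m-1}(\hat I)$, and the Bramble--Hilbert lemma yields $\nm{\hat v-\widehat P\hat v}_{H^1(\hat I)}\lesssim\snm{\hat v}_{H^m(\hat I)}$, a fortiori the same bound in the $L^2(\hat I)$-norm.

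Finally I would undo the scaling using $\nm{w}_{L^2(I_j)}=\tau_j^{1/2}\nm{\hat w}_{L^2(\hat I)}$ and $\snm{w}_{H^k(I_j)}=\tau_j^{1/2-k}\snm{\hat w}_{H^k(\hat I)}$ for $k=0,1$, together with $\snm{\hat v}_{H^m(\hat I)}=\tau_j^{m-1/2}\snm{v}_{H^m(I_j)}$. This converts the reference estimate into $\nm{v-P_\tau v}_{L^2(I_j)}\lesssim\tau_j^{m}\snm{v}_{H^m(I_j)}$ and $\snm{v-P_\tau v}_{H^1(I_j)}\lesssim\tau_j^{m-1}\snm{v}_{H^m(I_j)}$; multiplying the second by $\tau_j$, adding the two, and using $\tau_j\leqslant T$ to absorb the $L^2$ contribution to $\nm{v-P_\tau v}_{H^1(I_j)}$ gives the stated bound. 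I expect the only genuinely delicate point to be the well-posedness of $\widehat P$ --- the unisolvence argument above, and separately the degenerate case $m=1$, where $P_{-1}=\emptyset$ and $\widehat P\hat v\equiv\hat v(1)$, so the claim reduces to the elementary inequality $\nm{\hat v-\hat v(1)}_{L^2(\hat I)}+\nm{\hat v'}_{L^2(\hat I)}\lesssim\nm{\hat v}_{H^1(\hat I)}$; the rest is routine scaling and the standard averaged-Taylor/compactness argument behind Bramble--Hilbert.
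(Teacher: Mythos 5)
The paper does not prove this lemma at all: it is quoted directly from Thom\'ee's book, so there is no in-paper argument to compare against. Your proof --- affine scaling to the reference interval, unisolvence of the endpoint-plus-moments conditions (including the factorization $p=(\hat t-1)r$ trick and the degenerate case $m=1$), Bramble--Hilbert, and scaling back --- is the standard derivation of this estimate and is correct and complete.
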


\begin{lem}[\cite{Tartar2007}]
  \label{lem:equiv}
  If $ 0 < \alpha < 1/2 $, then
  \[
    C_1 \nm{v}_{H^\alpha(0,1)}
    \leqslant \snm{v}_{H^\alpha(0,1)} \leqslant
    C_2\nm{v}_{H^\alpha(0,1)}
  \]
  for all $ v \in H^\alpha(0,1) $, and
  \[
    C_3 \snm{v}_{H^\alpha(\mathbb R)} \leqslant
    \left(
      \int_\mathbb R \int_\mathbb R
      \frac{\snm{v(s)-v(t)}^2}{\snm{s-t}^{1+2\alpha}}
      \, \mathrm{d}s \, \mathrm{d}t
    \right)^\frac12 \leqslant C_4 \snm{v}_{H^\alpha(\mathbb R)}
  \]
  for all $ v \in H^\alpha(\mathbb R) $, where $ C_1 $, $ C_2 $, $ C_3 $ and $
  C_4 $ are four positive constants that depend only on $ \alpha $.
\end{lem}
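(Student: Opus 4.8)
The plan is to establish the two norm equivalences in turn, handling the one on $\mathbb{R}$ first because the one on $(0,1)$ will rely on it. For $v\in H^\alpha(\mathbb{R})$ I would compute the double integral on the right-hand side by Plancherel's identity: after the substitution $h=s-t$ and Fubini's theorem it equals, up to the normalisation of $\mathcal F$,
\[
  \int_\mathbb{R}\snm{\mathcal F v(\xi)}^2\left(\int_\mathbb{R}\frac{\snm{e^{\mathrm{i}h\xi}-1}^2}{\snm{h}^{1+2\alpha}}\,\mathrm{d}h\right)\mathrm{d}\xi,
\]
and the inner integral, after rescaling $h\mapsto h/\snm{\xi}$, becomes $\snm{\xi}^{2\alpha}\kappa(\alpha)$ with $\kappa(\alpha):=\int_\mathbb{R}\snm{e^{\mathrm{i}h}-1}^2\snm{h}^{-1-2\alpha}\,\mathrm{d}h$. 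Since $\snm{e^{\mathrm{i}h}-1}^2\sim\snm{h}^2$ near the origin and is bounded at infinity, $\kappa(\alpha)$ is finite and strictly positive precisely because $0<\alpha<1$; comparing with the Fourier definition of $\snm{\cdot}_{H^\alpha(\mathbb{R})}$ then yields the two-sided bound with $C_3,C_4$ depending only on $\alpha$ (indeed the two seminorms are proportional once $\mathcal F$ is fixed).

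For the statement on $(0,1)$ I would use the standard fact that $\nm{v}_{H^\alpha(0,1)}$ is equivalent to $\big(\nm{v}_{L^2(0,1)}^2+[v]^2\big)^{1/2}$, where $[v]^2:=\int_0^1\!\int_0^1\snm{v(s)-v(t)}^2\snm{s-t}^{-1-2\alpha}\,\mathrm{d}s\,\mathrm{d}t$, together with the identity $\snm{v}_{H^\alpha(0,1)}=\snm{v\chi_{(0,1)}}_{H^\alpha(\mathbb{R})}$. The lower bound $C_1\nm{v}_{H^\alpha(0,1)}\leqslant\snm{v}_{H^\alpha(0,1)}$ splits into two pieces. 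First, enlarging the domain of integration from $(0,1)^2$ to $\mathbb{R}^2$ only adds non-negative terms, so the $\mathbb{R}$-equivalence already proved gives $[v]\lesssim\snm{v\chi_{(0,1)}}_{H^\alpha(\mathbb{R})}=\snm{v}_{H^\alpha(0,1)}$. Second, for $t\in(0,1)$ and $s\in(1,2)$ one has $(v\chi_{(0,1)})(s)=0$ and $\snm{s-t}\leqslant2$, hence $\nm{v}_{L^2(0,1)}^2\leqslant2^{1+2\alpha}\int_0^1\!\int_1^2\snm{(v\chi_{(0,1)})(s)-(v\chi_{(0,1)})(t)}^2\snm{s-t}^{-1-2\alpha}\,\mathrm{d}s\,\mathrm{d}t\lesssim\snm{v}_{H^\alpha(0,1)}^2$, again by the $\mathbb{R}$-equivalence. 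Combining the two pieces gives the lower bound.

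The upper bound $\snm{v}_{H^\alpha(0,1)}\leqslant C_2\nm{v}_{H^\alpha(0,1)}$ is the only place where $\alpha<1/2$ is used, and I expect it to be the main obstacle; it is equivalent to the boundedness of the extension-by-zero operator $H^\alpha(0,1)\to H^\alpha(\mathbb{R})$. Using the Gagliardo characterisation from the first part for $f:=v\chi_{(0,1)}$, I would split $\int_\mathbb{R}\!\int_\mathbb{R}$ into the part with both arguments in $(0,1)$ (which is $[v]^2\lesssim\nm{v}_{H^\alpha(0,1)}^2$), the part with neither in $(0,1)$ (which vanishes), and the two symmetric cross parts. The cross parts reduce to $\int_0^1\snm{v(t)}^2\big(\int_{\mathbb{R}\setminus(0,1)}\snm{s-t}^{-1-2\alpha}\,\mathrm{d}s\big)\mathrm{d}t\sim\int_0^1\snm{v(t)}^2\,\mathrm{dist}(t,\{0,1\})^{-2\alpha}\,\mathrm{d}t$, so the whole matter comes down to the fractional Hardy inequality $\int_0^1\snm{v(t)}^2\,\mathrm{dist}(t,\{0,1\})^{-2\alpha}\,\mathrm{d}t\lesssim\nm{v}_{H^\alpha(0,1)}^2$, which holds exactly when $0<\alpha<1/2$ and fails for $\alpha\geqslant1/2$ (constants ceasing to be admissible there). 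That inequality can be proved by an averaging argument over the dyadic layers $\{2^{-k-1}<\mathrm{dist}(t,\{0,1\})<2^{-k}\}$, estimating $v$ on each layer against its mean on a comparable interval and summing the resulting geometric series; alternatively it is the content invoked from \cite{Tartar2007}. Assembling the three contributions yields $\snm{v}_{H^\alpha(0,1)}^2\lesssim\nm{v}_{H^\alpha(0,1)}^2$, completing the proof.
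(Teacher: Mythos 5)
The paper does not prove this lemma at all: it is quoted verbatim from the reference \cite{Tartar2007} (norm equivalence on an interval, Gagliardo--Fourier equivalence on $\mathbb{R}$, and implicitly the fractional Hardy inequality behind the extension-by-zero bound), so there is no in-paper argument to compare against. Your proof is a correct, self-contained reconstruction along the standard lines. The $\mathbb{R}$-part via Plancherel, the substitution $h=s-t$, and the scaling identity $\int_{\mathbb R}\snm{e^{\mathrm ih\xi}-1}^2\snm{h}^{-1-2\alpha}\,\mathrm dh=\kappa(\alpha)\snm{\xi}^{2\alpha}$ with $\kappa(\alpha)$ finite and positive for $0<\alpha<1$ is exactly right (and shows $C_3,C_4$ can even be taken so that the two quantities are proportional). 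The lower bound on $(0,1)$ by restricting the double integral and by testing against $s\in(1,2)$ is correct, and you correctly isolate the only genuinely nontrivial ingredient: the upper bound reduces, via $\int_{\mathbb R\setminus(0,1)}\snm{s-t}^{-1-2\alpha}\,\mathrm ds=\tfrac1{2\alpha}\bigl(t^{-2\alpha}+(1-t)^{-2\alpha}\bigr)$, to the fractional Hardy inequality $\int_0^1\snm{v(t)}^2\,\mathrm{dist}(t,\{0,1\})^{-2\alpha}\,\mathrm dt\lesssim\nm{v}_{H^\alpha(0,1)}^2$, which is precisely where $\alpha<1/2$ is needed and which fails for constants when $\alpha\geqslant1/2$. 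The dyadic-layer proof of that inequality is only sketched (one still has to control the telescoping means and sum $\int_0^1 t^{-2\alpha}\,\mathrm dt<\infty$, which is where $\alpha<1/2$ enters), but the sketch is standard and sound. What your route buys over the paper's bare citation is an explicit identification of where the restriction $\alpha<1/2$ is used, which is also the reason the paper works with $\gamma_0=(\gamma-1)/2\in(0,1/2)$ throughout.
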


\begin{lem}
  \label{lem:frac_appro}
  If $ 0 < \alpha < 1/2 $ and $ v \in H^{\alpha}(0,T) $ with $ v'\in L^1(0,T) $,
  then
  \[
    \snm{v-P_\tau v}_{H^{\alpha}(0,t_j)} \leqslant C_{\alpha,\gamma}
    \left(\sum_{i=1}^j \tau_i^{2-2\alpha} \inf_{0 \leqslant \delta < 1}
      \frac{t_i^{-\delta}}{1-\delta}
    \nm{\left(v - P_\tau v\right)'}_{L_\delta^2(I_i)}^2\right)^{\frac12}
  \]
  for each $ 1 \leqslant j \leqslant J $.
\end{lem}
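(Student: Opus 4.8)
The plan is to set $e:=v-P_\tau v$ and to estimate $\snm{e}_{H^\alpha(0,t_j)}$ through the Gagliardo (Sobolev--Slobodeckij) seminorm. Using $v'\in L^1$ (so that $v$ is absolutely continuous and $P_\tau v$ is well defined) and the defining properties of $P_\tau$, one checks that each restriction $e|_{I_i}$ is absolutely continuous on $\overline{I_i}$ with $e(t_i^-)=0$, hence $e(s)=-\int_s^{t_i}e'(r)\,\mathrm dr$ for $s\in I_i$; in particular $e$ is bounded and, since $\alpha<1/2$, lies in $H^\alpha(0,t_j)$. Writing $E:=e\chi_{(0,t_j)}$ for the extension by zero and recalling the paper's definition $\snm{\cdot}_{H^\alpha(\mathcal O)}=\snm{(\cdot)\chi_{\mathcal O}}_{H^\alpha(\mathbb R)}$, \cref{lem:equiv} gives
\[
  \snm{e}_{H^\alpha(0,t_j)}^2\sim\int_{\mathbb R}\int_{\mathbb R}\frac{\snm{E(s)-E(t)}^2}{\snm{s-t}^{1+2\alpha}}\,\mathrm ds\,\mathrm dt .
\]
Splitting $\mathbb R$ into the blocks $(-\infty,0]$, $I_1,\dots,I_j$, $[t_j,\infty)$, keeping the diagonal blocks, and on each off-diagonal block using $\snm{E(s)-E(t)}^2\le2\snm{E(s)}^2+2\snm{E(t)}^2$ together with the identity $\int_{\mathbb R\setminus I_i}\snm{s-t}^{-1-2\alpha}\,\mathrm dt=\frac1{2\alpha}\big((s-t_{i-1})^{-2\alpha}+(t_i-s)^{-2\alpha}\big)$ for $s\in I_i$ (finite precisely because $\alpha<1/2$), I would arrive at
\[
  \snm{e}_{H^\alpha(0,t_j)}^2\lesssim\sum_{i=1}^j\Big([e]_{H^\alpha(I_i)}^2+\int_{I_i}\snm{e(s)}^2\big((s-t_{i-1})^{-2\alpha}+(t_i-s)^{-2\alpha}\big)\,\mathrm ds\Big),
\]
where $[\,\cdot\,]_{H^\alpha(I_i)}$ denotes the Gagliardo seminorm over $I_i$.

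It then suffices to bound each summand by $C_\alpha\,\tau_i^{2-2\alpha}\,\frac{t_i^{-\delta}}{1-\delta}\,\nm{e'}_{L_\delta^2(I_i)}^2$ for \emph{every} $\delta\in[0,1)$, since the summand itself does not depend on $\delta$; taking the infimum over $\delta$ on each $I_i$ and summing then yields the lemma (after a square root and one more use of \cref{lem:equiv}). For the two weighted boundary terms I would fix $\delta\in[0,1)$, use $e(s)=-\int_s^{t_i}e'$ and Cauchy--Schwarz with the weight $r^{\pm\delta/2}$, and invoke the elementary inequality $t_i^{1-\delta}-s^{1-\delta}\le t_i^{-\delta}(t_i-s)$ (equivalently $(s/t_i)^{1-\delta}\ge s/t_i$, valid since $0\le s\le t_i$) to get $\snm{e(s)}^2\le\frac{(t_i-s)\,t_i^{-\delta}}{1-\delta}\,\nm{e'}_{L_\delta^2(I_i)}^2$; combined with $t_i-s\le\tau_i$ and the explicit integrals $\int_{I_i}(s-t_{i-1})^{-2\alpha}\,\mathrm ds=\frac{\tau_i^{1-2\alpha}}{1-2\alpha}$ and $\int_{I_i}(t_i-s)^{1-2\alpha}\,\mathrm ds=\frac{\tau_i^{2-2\alpha}}{2-2\alpha}$, this disposes of the boundary terms.

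For $[e]_{H^\alpha(I_i)}^2$ I would rescale $I_i$ to $(0,1)$ via $\widehat e(\widehat r):=e(t_{i-1}+\tau_i\widehat r)$, so that $[e]_{H^\alpha(I_i)}^2=\tau_i^{1-2\alpha}[\widehat e]_{H^\alpha(0,1)}^2$, and prove the reference estimate
\[
  [w]_{H^\alpha(0,1)}^2\le\frac{2}{(1-2\alpha)(1-\delta)}\int_0^1 r^\delta\snm{w'(r)}^2\,\mathrm dr\qquad(w'\in L^1(0,1))
\]
by writing $w(s)-w(t)=\int_t^s w'$, estimating $\int_t^s r^{-\delta}\,\mathrm dr\le(s-t)\,t^{-\delta}$, applying Cauchy--Schwarz, integrating the resulting triple integral by Tonelli (the factor $(s-t)^{-2\alpha}$ being integrable because $\alpha<1/2$), and finally using $r^{1-\delta}\le1$ on $(0,1)$. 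Scaling back, the weight $\tau_i^{1-\delta}(r-t_{i-1})^\delta$ that appears is bounded by $\tau_i t_i^{-\delta}r^\delta$ thanks to $\tfrac{r-t_{i-1}}{\tau_i}\le\tfrac{r}{t_i}$ (equivalently $r\le t_i$), so $[e]_{H^\alpha(I_i)}^2\le C_\alpha\,\frac{\tau_i^{2-2\alpha}t_i^{-\delta}}{1-\delta}\,\nm{e'}_{L_\delta^2(I_i)}^2$, as needed.

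The delicate point — and the only real obstacle — is securing these interval estimates with the correct dependence on $\delta$ \emph{uniformly} for $\delta\in[0,1)$: the natural shortcut of pulling $\nm{e'}_{L_\delta^2(I_i)}$ entirely out of the double integral fails, because $2\alpha+\delta$ may be $\ge1$ and then $(s-t)^{-2\alpha-\delta}$ is no longer integrable across the diagonal; one must keep the weighted $L^2$-norm local and release it only after the $(s-t)^{-2\alpha}$ factor has been integrated (which is where $\alpha<1/2$ is used once more). The two one-line convexity inequalities $(s/t_i)^{1-\delta}\ge s/t_i$ and $\frac{r-t_{i-1}}{\tau_i}\le\frac{r}{t_i}$ are exactly what turn the $s$-dependent weights into the single nodal weight $t_i^{-\delta}$ and keep the constant independent of the grid (so in fact the argument delivers the stronger statement with $C_\alpha$ in place of $C_{\alpha,\gamma}$).
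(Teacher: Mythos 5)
Your proof is correct and follows essentially the same route as the paper: reduce to the Gagliardo seminorm of the zero extension via \cref{lem:equiv}, split into diagonal blocks plus off-diagonal/exterior contributions weighted by $(t-t_{i-1})^{-2\alpha}+(t_i-t)^{-2\alpha}$, and control everything by $\delta$-weighted Hardy-type bounds that exploit $e(t_i^-)=0$ and the convexity inequality $(s/t_i)^{1-\delta}\geqslant s/t_i$. The only difference is organizational: the paper delegates these weighted interval estimates to \cref{lem:jm} in the appendix (proving the diagonal one via a Minkowski-integral-inequality argument rather than your Cauchy--Schwarz/Tonelli rescaling, which costs you an extra harmless factor $(1-2\alpha)^{-1}$), and handles the off-diagonal sum by telescoping instead of integrating $\snm{s-t}^{-1-2\alpha}$ over $\mathbb R\setminus I_i$ in one stroke.
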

\begin{proof}
  Setting $ g:= (I-P_\tau)v $, by \cref{lem:equiv} we only need to prove
  \begin{equation}
    \label{eq:I123}
    \begin{aligned}
      E_1 + E_2 + E_3
      \leqslant C_{\alpha,\gamma} \sum_{i=1}^j \tau_i^{2-2\alpha}
      \inf_{0\leqslant\delta<1} \frac{t_i^{-\delta}}{1-\delta}
      \nm{(v-P_\tau v)'}^2_{L^2_\delta(I_i)},
    \end{aligned}
  \end{equation}
  where
  \begin{align*}
    E_1 &= \sum_{i=1}^j
    \int_{t_{i-1}}^{t_i} \, \mathrm{d}t
    \int_{t_{i-1}}^{t_i} \snm{g(t)-g(s)}^2\snm{t-s}^{-1-2\alpha} \, \mathrm{d}s, \\
    E_2 &= \sum_{i=1}^j \sum_{l=i+1}^j
    \int_{t_{i-1}}^{t_i} \, \mathrm{d}t
    \int_{t_{l-1}}^{t_l} \snm{g(t)-g(s)}^2\snm{t-s}^{-1-2\alpha} \mathrm{d}s, \\
    E_3 &= \int_0^{t_j} \snm{g(t)}^2
    \left(
      \int_{t_j}^\infty (s-t)^{-1-2\alpha} \,\mathrm{d}s +
      \int_{-\infty}^0 (t-s)^{-1-2\alpha} \,\mathrm{d}s
    \right)\, \mathrm{d}t.
  \end{align*}
  Let us first observe that a straightforward calculation yields
  \begin{align*}
    {} &
    \sum_{i=1}^j \sum_{l=i+1}^j
    \int_{t_{i-1}}^{t_i} \, \mathrm{d}t \int_{t_{l-1}}^{t_l}
    g^2(t)\snm{t-s}^{-1-2\alpha} \, \mathrm{d}s \\
    ={} &
    \frac1{2\alpha} \sum_{i=1}^j \sum_{l=i+1}^j \int_{t_{i-1}}^{t_i}
    g^2(t) \left(
      (t_{l-1}-t)^{-2\alpha} - (t_l-t)^{-2\alpha}
    \right)
    \, \mathrm{d}t  \\
    \leqslant{} &
    \frac1{2\alpha} \sum_{i=1}^{j-1} \int_{t_{i-1}}^{t_i}\
    g^2(t) (t_i-t)^{-2\alpha}
    \, \mathrm{d}t
  \end{align*}
  and
  \begin{align*}
    {} &
    \sum_{i=1}^j \sum_{l=i+1}^j
    \int_{t_{i-1}}^{t_i} \, \mathrm{d}t \int_{t_{l-1}}^{t_l}
    g^2(s)\snm{t-s}^{-1-2\alpha} \, \mathrm{d}s \\
    ={} &
    \frac1{2\alpha} \sum_{i=1}^j \sum_{l=i+1}^j \int_{t_{l-1}}^{t_l} g^2(s)
    \left(
      (s-t_i)^{-2\alpha} - (s-t_{i-1})^{-2\alpha}
    \right) \, \mathrm{d}s \\
    \leqslant{} &
    \frac1{2\alpha} \sum_{l=2}^j \int_{t_{l-1}}^{t_l}
    g^2(s) (s-t_{l-1})^{-2\alpha}
    \, \mathrm{d}s.
  \end{align*}
  It follows that
  \[
    E_2 \leqslant
    \frac1\alpha \sum_{i=1}^j
    \int_{t_{i-1}}^{t_i} g^2(t) \big(
      (t_i-t)^{-2\alpha} + (t-t_{i-1})^{-2\alpha}
    \big) \, \mathrm{d}t.
  \]
  In addition, it is evident that
  \begin{align*}
    E_3 \leqslant
    \frac1{2\alpha} \sum_{i=1}^j \int_{t_{i-1}}^{t_i}
    g^2(t) \big((t_i-t)^{-2\alpha} + (t-t_{i-1})^{-2\alpha}\big) \,
    \mathrm{d}t.
  \end{align*}
  Therefore, using \cref{lem:jm} yields
  \begin{align*}
    E_2 + E_3 &
    \leqslant \frac3{2\alpha} \sum_{i=1}^j
    \int_{t_{i-1}}^{t_i} g^2(t)
    \big( (t_i-t)^{-2\alpha} + (t-t_{i-1})^{-2\alpha} \big)
    \, \mathrm{d}t \\
    & \leqslant
    C_{\alpha,\gamma} \sum_{i=1}^j \tau_i^{2-2\alpha}\inf_{0\leqslant\delta<1}
    \frac{t_i^{-\delta}}{1-\delta}\nm{g'}^2_{L^2_\delta(I_i)}.
  \end{align*}
  As \cref{lem:jm} also implies
  \[
    E_1 \leqslant C_\alpha\sum_{i=1}^j \tau_i^{2-2\alpha}
    \inf_{0\leqslant\delta<1} \frac{t_i^{-\delta}}{1-\delta}
    \nm{g'}^2_{L^2_\delta(I_i)},
  \]
  we readily obtain \cref{eq:I123} and thus conclude the proof.
\end{proof}

\begin{lem}
  \label{lem:graded}
  Define
  \[
    v(t) := t^r, \quad 0 < t < T,
  \]
  where $ 1 < r \leqslant m + 1/2 $ and $ r \not\in \mathbb N $. If $ \sigma=1
  $, then
  \begin{equation}
    \label{eq:equiv_grid}
    \sum_{i=1}^j \tau_i^{3-\gamma} \inf_{0 \leqslant \delta < 1}
    \frac{t_i^{-\delta}}{1-\delta}
    \nm{\left(v'- P_\tau v'\right)'}_{L_\delta^2(I_i)}^2
    \leqslant C_{r,m,\gamma,T} J^{-(2r-\gamma)}.
  \end{equation}
  If $ \sigma > \sigma^* $, then
  \begin{equation}
    \label{eq:graded_grid}
    \sum_{i=1}^j \tau_i^{3-\gamma} \inf_{0 \leqslant \delta < 1}
    \frac{t_i^{-\delta}}{1-\delta}
    \nm{\left(v'- P_\tau v'\right)'}_{L_\delta^2(I_i)}^2
    \leqslant C_{r,m,\gamma,\sigma,T} J^{-(2m+1-\gamma)}
  \end{equation}
  for each $ 1 \leqslant j \leqslant J $, where
  \[
    \sigma^* := \frac{2m+1-\gamma}{2r-\gamma}.
  \]
\end{lem}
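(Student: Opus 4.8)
The plan is to split the sum at the first subinterval $I_1=(0,t_1)$, which alone carries the singularity of $v=t^r$ at the origin, and to treat $I_1$ and the remaining subintervals $I_2,\dots,I_j$ --- on which $v'=rt^{r-1}$ is smooth up to the boundary --- by two different devices. I expect the $i=1$ term to be of size $\tau_1^{2r-\gamma}=(TJ^{-\sigma})^{2r-\gamma}$; this is precisely why $\sigma^{*}=(2m+1-\gamma)/(2r-\gamma)$ is the relevant threshold, since $\sigma\geqslant\sigma^{*}$ is equivalent to $\tau_1^{2r-\gamma}\leqslant C\,J^{-(2m+1-\gamma)}$.

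For $2\leqslant i\leqslant j$ I would take $\delta=0$. Since $P_\tau$ reproduces polynomials of degree $\leqslant m-1$, applying \cref{lem:ptau} to $v'$ after subtracting a suitable $p\in P_{m-1}$ gives $\nm{(v'-P_\tau v')'}_{L^2(I_i)}\lesssim\tau_i^{m-1}\nm{(v')^{(m)}}_{L^2(I_i)}$; and since $(v')^{(m)}(t)$ is a constant multiple of $t^{r-1-m}$ with $r-1-m\leqslant-1/2$, while $t_{i-1}/t_i=((i-1)/i)^\sigma\geqslant2^{-\sigma}$ on both meshes, the monotone integrand is comparable throughout $I_i$ and $\nm{(v')^{(m)}}_{L^2(I_i)}^2\lesssim t_i^{2(r-1-m)}\tau_i$. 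Hence the $i$-th summand is $\lesssim\tau_i^{2m+2-\gamma}\,t_i^{2(r-1-m)}$. For $i=1$ I would use that $P_\tau$ is covariant under the affine map of $I_1$ onto $(0,1)$: with $\hat g(\hat t)=\hat t^{\,r-1}$ and $P_1$ the corresponding operator on $(0,1)$, one gets $(v'-P_\tau v')'(t)=r\,\tau_1^{\,r-2}\,(\hat g-P_1\hat g)'(t/\tau_1)$, so that
\[
  \nm{(v'-P_\tau v')'}_{L^2_\delta(I_1)}^2
  = r^2\,\tau_1^{\,2r-3+\delta}\,\nm{(\hat g-P_1\hat g)'}_{L^2_\delta(0,1)}^2 .
\]
Because $r>1$, the constant on the right is finite for every $\delta\in(\max\{0,3-2r\},1)$ (near $\hat t=0$ the integrand is $\sim\hat t^{\,\delta+2r-4}$); fixing one such $\delta$ and using $t_1=\tau_1$, the $i=1$ summand is $\leqslant\frac{r^2}{1-\delta}\nm{(\hat g-P_1\hat g)'}_{L^2_\delta(0,1)}^2\,\tau_1^{\,2r-\gamma}\lesssim\tau_1^{\,2r-\gamma}$.

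It then remains to insert $t_i=T(i/J)^\sigma$ and $\tau_i\sim\sigma\,t_i/i$ and to sum. The $i=1$ contribution is $\lesssim\tau_1^{2r-\gamma}=T^{2r-\gamma}J^{-\sigma(2r-\gamma)}$, while for $2\leqslant i\leqslant j$,
\[
  \tau_i^{2m+2-\gamma}\,t_i^{2(r-1-m)}\sim C_\sigma\,T^{2r-\gamma}\,J^{-\sigma(2r-\gamma)}\,i^{\,\sigma(2r-\gamma)-(2m+2-\gamma)} .
\]
If $\sigma=1$, the exponent of $i$ equals $2r-2m-2\leqslant-1$ (by $r\leqslant m+1/2$), so $\sum_{i=2}^j i^{\,2r-2m-2}$ is bounded uniformly in $j$; together with $J^{-\sigma(2r-\gamma)}=J^{-(2r-\gamma)}$ this yields \cref{eq:equiv_grid}. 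If $\sigma>\sigma^{*}$, the exponent of $i$ exceeds $-1$, so $\sum_{i=2}^j i^{\,\sigma(2r-\gamma)-(2m+2-\gamma)}\lesssim J^{\,\sigma(2r-\gamma)-(2m+1-\gamma)}$; this cancels the factor $J^{-\sigma(2r-\gamma)}$ and leaves the interior part $\lesssim C_{r,m,\gamma,\sigma,T}J^{-(2m+1-\gamma)}$, while $\sigma\geqslant\sigma^{*}$ makes the $i=1$ term $\lesssim J^{-(2m+1-\gamma)}$ as well, giving \cref{eq:graded_grid}.

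The hard part will be twofold. First, the bookkeeping in the last step that pins down $\sigma^{*}$: one must check in each regime that the $i=1$ term never exceeds the interior sum, and keep careful track of which constants are permitted to depend on $\sigma$. Second, the critical value $r=m+1/2$ with $\sigma=1$, where the exponent of $i$ is exactly $-1$ and the per-subinterval infimum over $\delta$ must be scrutinised --- the interior series then sums to $\sim\log j$ --- so that at that value of $r$ one either needs a separate sharper estimate or reads the bound up to a logarithmic factor. Finally, although elementary, it is worth stating explicitly the covariance of $P_\tau$ under the affine rescaling of $I_1$ invoked above, since this is exactly what lets the first-subinterval analysis avoid any abstract weighted approximation machinery.
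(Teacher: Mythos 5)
Your proof is correct and follows essentially the same route as the paper: split off $I_1$ and handle it by the affine covariance of $P_\tau$ with a weight exponent $\delta$ chosen in $(\max\{0,3-2r\},1)$ (the paper fixes $\delta_0=1/2$ or $2-r$), then bound the interior terms with $\delta=0$ via \cref{lem:ptau} --- your direct summation of the series in $i$ using $t_i=T(i/J)^\sigma$, $\tau_i\sim\sigma t_i/i$ is just the discrete version of the paper's integral comparison based on $\tau_i<\sigma 2^{\sigma-1}J^{-1}T^{1/\sigma}t_{i-1}^{1-1/\sigma}$. Your remark about the endpoint $r=m+1/2$ with $\sigma=1$ is well taken: the paper's own ``trivial modification'' yields the same $\ln J$ factor there and passes over it in silence.
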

\begin{proof}
  Since the proof of \cref{eq:equiv_grid} is a trivial modification of that of
  \cref{eq:graded_grid}, we only prove the latter. A standard scaling argument
  yields
  \[
    \nm{(P_\tau v')'}_{L_{\delta_0}^2(I_1)} \leqslant
    C_r \nm{v''}_{L_{\delta_0}^2(I_1)} \leqslant
    C_r \tau_1^{(2r+\delta_0-3)/2},
  \]
  where
  \[
    \delta_0 :=
    \begin{cases}
      1/2 & \text{ if } r \geqslant 3/2,\\
      2-r & \text{ if } 1 < r < 3/2.
    \end{cases}
  \]
  It follows that
  \begin{align*}
    & \tau_1^{3-\gamma} \inf_{0 \leqslant \delta < 1} \frac{t_1^{-\delta}}{1-\delta}
    \nm{(v'-P_\tau v')'}_{L_\delta^2(I_1)}^2 \\
    \leqslant{} &
    \tau_1^{3-\gamma} \frac{\tau_1^{-\delta_0}}{1-\delta_0}
    \nm{(v'-P_\tau v')'}_{L_{\delta_0}^2(I_1)}^2 \\
    \leqslant{} &
    C_r \tau_1^{2r-\gamma}.
  \end{align*}
  Therefore, by the evident estimate
  \[
    \tau_1^{2r-\gamma} \leqslant C_{r,\gamma,\sigma,T} J^{-(2m+1-\gamma)},
  \]
  we obtain
  \begin{equation}
    \label{eq:733}
    \tau_1^{3-\gamma} \inf_{0 \leqslant \delta < 1} \frac{t_1^{-\delta}}{1-\delta}
    \nm{(v'-P_\tau v')'}_{L_\delta^2(I_1)}^2
    \leqslant C_{r,\gamma,\sigma,T} J^{-(2m+1-\gamma)}.
  \end{equation}

  In addition, \cref{lem:ptau} implies
  \[
    \begin{aligned}
      &\sum_{i=2}^j \tau_i^{3-\gamma} \inf_{0 \leqslant \delta < 1}
      \frac{t_i^{-\delta}}{1-\delta} \nm{(v'-P_\tau v')'}_{L_\delta^2(I_i)}^2 \\
      \leqslant{} &
      \sum_{i=2}^j \tau_i^{3-\gamma} \nm{(v'-P_\tau v')'}_{L^2(I_i)}^2 \\
      \leqslant{} &
      C_m \sum_{i=2}^j \tau_i^{3-\gamma+2(m-1)}
      \int_{t_{i-1}}^{t_i} t^{2(r-m-1)} \, \mathrm{d}t.
    \end{aligned}
  \]
  Then, by the inequality
  \[
    \tau_i < \sigma 2^{\sigma-1} J^{-1} T^{1/\sigma} t_{i-1}^{1-1/\sigma},
    \quad 2 \leqslant i \leqslant j,
  \]
  we obtain
  \begin{equation}
    \label{eq:734}
    \begin{aligned}
      &\sum_{i=2}^j \tau_i^{3-\gamma} \inf_{0 \leqslant \delta < 1}
      \frac{t_i^{-\delta}}{1-\delta} \nm{(v'-P_\tau v')'}_{L_\delta^2(I_i)}^2 \\
      \leqslant{}&
      C_{m,\gamma,\sigma,T} J^{-(2m+1-\gamma)} \int_{t_1}^{t_j}
      t^{2(r-m-1) + (1-1/\sigma)(2m+1-\gamma)} \, \mathrm{d}t \\
      \leqslant{}& C_{r,m,\gamma,\sigma,T} J^{-(2m+1-\gamma)}.
    \end{aligned}
  \end{equation}

  Finally, combining \cref{eq:733,eq:734} yields \cref{eq:graded_grid} and thus
  proves the lemma.
\end{proof}

\begin{lem}
  \label{lem:xy}
  If $ \gamma_0 \leqslant \beta < \infty $ and $ v \in H^\alpha(0,T) $ with $ 0
  \leqslant \alpha < 1/2 $, then
  \begin{equation}
    \label{eq:xy}
    \inf_{w_\tau \in W_\tau}
    \ssnm{\I_{t_j-}^\beta v - w_\tau}_{H^{\gamma_0}(0,t_j)}
    \leqslant C_{\alpha,\beta,m,\gamma,\sigma,T} \tau_j^{\min\{\alpha+\beta-\gamma_0,m-\gamma_0\}}
    \snm{v}_{H^{\varrho}(0,t_j)}
  \end{equation}
  for each $ 1 \leqslant j \leqslant J $,
  where
  \[
    \varrho :=\min\big\{\alpha,\max\{0,m-\beta\}\big\}.
  \]
\end{lem}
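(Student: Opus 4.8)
The left-hand side is a best-approximation error for $g:=\I_{t_j-}^\beta v$, measured in the extension-based $H^{\gamma_0}$-norm, by piecewise polynomials of degree $\le m-1$; for smooth $v$ the function $g$ is $C^\infty$ on $(0,t_j)$ with only a $(t_j-t)^\beta$-type singularity at $t_j$, so the situation is of ``single singular endpoint'' type. Since $0\le\gamma_0<1/2$, extension by zero is admissible in the definition of $\ssnm{\cdot}_{H^{\gamma_0}(0,t_j)}$ and its seminorm on $(-\infty,t_j)$ equals $\snm{\cdot}_{H^{\gamma_0}(0,t_j)}$, so $\ssnm{\phi}_{H^{\gamma_0}(0,t_j)}\le\snm{\phi}_{H^{\gamma_0}(0,t_j)}$; and a density argument lets me assume $v$ smooth, so that $g$ is continuous on $(0,t_j]$ with $g(t_j)=0$ and the interpolant $P_\tau g$ is well defined. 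It then suffices to bound $\snm{g-w_\tau}_{H^{\gamma_0}(0,t_j)}$ for the single choice $w_\tau:=P_\tau g$ on $(0,t_{j-1})$ and $w_\tau:=0$ on $I_j$ (and $w_\tau:=0$ throughout when $j=1$).

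I would then decompose $\snm{g-w_\tau}_{H^{\gamma_0}(0,t_j)}^2$ into the Gagliardo double integrals over $(0,t_{j-1})^2$, over $(0,t_{j-1})\times I_j$ and over $I_j^2$, plus the two boundary-weighted single integrals, exactly as in the proof of \cref{lem:frac_appro}. For the contributions involving $I_j$ one uses $g|_{I_j}=\I_{t_j-}^\beta(v|_{I_j})$ and the fact that $\I_{t_j-}^\beta\colon L^2(I_j)\to H^{\gamma_0}(I_j)$ (valid since $\beta\ge\gamma_0$) has scaling constant $\tau_j^{\beta-\gamma_0}$; together with $\nm{v}_{L^2(I_j)}\lesssim\tau_j^{\varrho}\snm{v}_{H^\varrho(0,t_j)}$ (which comes from the singular weight $(t_j-t)^{-2\varrho}$ inside $\snm{v}_{H^\varrho(0,t_j)}^2$) and the elementary inequality $\beta+\varrho-\gamma_0\ge\min\{\alpha+\beta-\gamma_0,\,m-\gamma_0\}$ --- a short case check, with $\varrho=\min\{\alpha,\max\{0,m-\beta\}\}$, over the three regimes $\alpha+\beta\le m$, $m-\alpha<\beta\le m$, $\beta>m$ --- these pieces are dominated by $\tau_j^{2\min\{\alpha+\beta-\gamma_0,m-\gamma_0\}}\snm{v}_{H^\varrho(0,t_j)}^2$. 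The integral over $(0,t_{j-1})^2$ and its boundary weights are, after the crude bound $(t_j-t)^{-2\gamma_0}\le(t_{j-1}-t)^{-2\gamma_0}$, at most $\snm{g-P_\tau g}_{H^{\gamma_0}(0,t_{j-1})}^2$, to which \cref{lem:frac_appro} applies with $j$ replaced by $j-1$ and $\alpha$ by $\gamma_0$.

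This leaves the technical core: bounding $\sum_{i=1}^{j-1}\tau_i^{2-2\gamma_0}\inf_{0\le\delta<1}\frac{t_i^{-\delta}}{1-\delta}\nm{(g-P_\tau g)'}_{L^2_\delta(I_i)}^2$. On each $I_i$ with $i<j$, $g$ is smooth, and I would control it in terms of $v$ through the factored representation $g(t)=\Gamma(\beta)^{-1}(t_j-t)^\beta\int_0^1 u^{\beta-1}v(t+u(t_j-t))\,\mathrm{d}u$, combined with the $L^2$-stability and $P_{m-1}$-reproduction of the Radau-type interpolant $P_\tau$ --- subtracting a suitable degree-$(m-1)$ polynomial so that the endpoint datum $g(t_i)$ in the definition of $P_\tau$ does not enter --- to obtain $\nm{(g-P_\tau g)'}_{L^2_\delta(I_i)}$ bounded by a power of $\tau_i$ times a localized $H^\varrho$-quantity of $v$ near $t_j$, with $\delta=\delta_i$ chosen to balance the $(t_j-t)^{\beta-m}$ growth of the high derivatives of $g$ against the regularity budget $\varrho$. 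Summing over $i$ with $t_i\sim t_{i-1}$ and the grading inequality $\tau_i<\sigma2^{\sigma-1}J^{-1}T^{1/\sigma}t_{i-1}^{1-1/\sigma}$, exactly as in \cref{lem:graded}, turns this sum into an integral that evaluates to $\tau_j^{2\min\{\alpha+\beta-\gamma_0,m-\gamma_0\}}\snm{v}_{H^\varrho(0,t_j)}^2$, the ``$\min$'' reflecting the competition between $\alpha+\beta$ and the polynomial degree $m$.

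The main obstacle is precisely this interior estimate: one must bound $(g-P_\tau g)'$ on $I_i$ with the correct power of $\tau_i$ toward $t_j$ and with a right-hand side involving \emph{only} the low-order seminorm $\snm{v}_{H^\varrho}$ --- the naive route through $\nm{g^{(m)}}_{L^2(I_i)}$ would cost $m$ derivatives of $v$ and fail to close the estimate --- which forces one to expend exactly the smoothness of $g$ that the $H^\varrho$-budget of $v$ affords (via fractional-order approximation properties of $P_\tau$, or via the factored form $g=(t_j-t)^\beta\psi$ with $\psi$ an average of $v$) and then to tune the family of weight exponents $\delta_i$ so that the resulting weighted sum converges to exactly the stated rate in each of the three regimes for $\varrho$; dovetailing this with the separate treatment of $I_j$ and of the Gagliardo cross terms is the remaining bookkeeping.
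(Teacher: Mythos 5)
Your outer reductions are sound: zero extension gives $\ssnm{\cdot}_{H^{\gamma_0}(0,t_j)}\leqslant\snm{\cdot}_{H^{\gamma_0}(0,t_j)}$, the $I_j$-contribution scales like $\tau_j^{\beta-\gamma_0}\nm{v}_{L^2(I_j)}\lesssim\tau_j^{\beta+\varrho-\gamma_0}\snm{v}_{H^\varrho(0,t_j)}$, and your exponent check $\beta+\varrho-\gamma_0\geqslant\min\{\alpha+\beta-\gamma_0,m-\gamma_0\}$ is correct. But the proof is not complete: the step you yourself call ``the technical core'' --- bounding $\sum_{i<j}\tau_i^{2-2\gamma_0}\inf_{\delta}\frac{t_i^{-\delta}}{1-\delta}\nm{(g-P_\tau g)'}_{L^2_\delta(I_i)}^2$ with $g:=\I_{t_j-}^\beta v$ and only $\snm{v}_{H^{\varrho}(0,t_j)}$ on the right --- is essentially the whole content of the lemma, and the route you sketch for it cannot work in the stated generality. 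The lemma permits $\gamma_0\leqslant\beta<1/2$ and $v$ merely in $H^\alpha(0,T)$ with $\alpha<1/2$ (it is in fact applied in \cref{lem:I-Q} with $\beta=2\gamma_0$, which is small when $\gamma$ is close to $1$). For such data $g$ is not smooth on the interior intervals $I_i$, $i<j$ (the kernel singularity at $s=t$ sits inside the domain of integration, not only at $t_j$), and $g'=-\D_{t_j-}^{1-\beta}v$ is a fractional derivative of order $1-\beta>1/2$ of an $H^\alpha$-function, hence in general not locally square-integrable; so $P_\tau g$ need not even be defined and $\nm{(g-P_\tau g)'}_{L^2_\delta(I_i)}$ need not be finite. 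Mollifying $v$ does not repair this: along the mollification the left-hand side of your interior estimate blows up while $\snm{v}_{H^\varrho}$ stays bounded, so no uniform constant can exist. The machinery of \cref{lem:frac_appro} and \cref{lem:jm}, which trades the Gagliardo seminorm for weighted $L^2_\delta$-norms of the \emph{derivative} of the error, is simply the wrong currency here.

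The paper's proof avoids the interval-by-interval analysis entirely. It extends $v$ by zero to $(-T,0)$ and enlarges the test space to piecewise polynomials $W_\tau^*$ on $(-T,t_j)$, padding $(-T,0)$ with a uniform grid of spacing $T/J^*\leqslant\tau_1$; since $\sigma\geqslant1$ makes the $\tau_i$ nondecreasing, the maximal mesh size on all of $(-T,t_j)$ is still $\tau_j$. It then passes to the full norm $\nm{\cdot}_{H^{\gamma_0}(-T,t_j)}$, where the norm equivalences of \cref{lem:equiv} have constants depending only on $\gamma$ and $T$ because the interval has length at least $T$ --- this is precisely why the remark after the lemma stresses that the constant is independent of $t_j$. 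A single global mapping property, $\nm{\I_{t_j-}^\beta v}_{H^{\min\{\alpha+\beta,m\}}(-T,t_j)}\lesssim\snm{v}_{H^\varrho(0,t_j)}$ (a scaled version of \cref{lem:regu}; this is where your three-regime case analysis for $\varrho$ actually lives), then reduces everything to the standard best-approximation estimate for piecewise polynomials of degree $m-1$ in fractional Sobolev norms on $(-T,t_j)$. To salvage your approach you would have to either restrict to $\beta\geqslant1$ or prove a genuinely new local fractional-order approximation property of $P_\tau\circ\I_{t_j-}^\beta$; the global route is both shorter and the intended one.
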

\begin{proof}
  Let $ J^* $ be the smallest integer such that $ T/J^* \leqslant \tau_1 $, and
  define
  \[
    W_\tau^* := \left\{
      w_\tau \in L^2(-T,T):\
      w_\tau|_{I_i} \in P_{m-1}(I_i),\
      {-(J^*-1)} \leqslant i\leqslant J
    \right\},
  \]
  where
  \[
    I_i := \left( {\frac{i-1}{J^*}} T, {\frac i{J^*}} T \right)
    \quad \text{ for all }  {-(J^*-1)} \leqslant i \leqslant 0.
  \]
  Extending $ v $ to $ (-T,0) $ by zero, by the definition of the norm $
  \ssnm{\cdot}_{H^{\gamma_0}(0,t_j)} $ we obtain
  \begin{align*}
    \inf_{w_\tau \in W_\tau}
    \ssnm{\I_{t_j-}^\beta v - w_\tau}_{H^{\gamma_0}(0,t_j)}
    & \leqslant
    \inf_{w_\tau \in W_\tau^*}
    \ssnm{
      \I_{t_j-}^\beta v - w_\tau
    }_{H^{\gamma_0}(-T,t_j)} \\
    & \leqslant
    \inf_{w_\tau \in W_\tau^*}
    \snm{\I_{t_j-}^\beta v - w_\tau}_{H^{\gamma_0}(-T,t_j)},
  \end{align*}
  so that
  \begin{equation*}
    \inf_{w_\tau \in W_\tau}
    \ssnm{\I_{t_j-}^\beta v - w_\tau}_{H^{\gamma_0}(0,t_j)}
    \leqslant C_{\gamma,T}
    \inf_{w_\tau \in W_\tau^*}
    \nm{\I_{t_j-}^\beta v - w_\tau}_{H^{\gamma_0}(-T,t_j)},
  \end{equation*}
  by \cref{lem:equiv}. Additionally, a standard scaling argument yields
  \[
    \nm{\I_{t_j-}^\beta v}_{H^{\min\{\alpha+\beta,m\}}(-T,t_j)}
    \leqslant C_{\alpha,\beta,m,T} \snm{v}_{H^\varrho(-T,t_j)}
    = C_{\alpha,\beta,m,T} \snm{v}_{H^\varrho(0,t_j)},
  \]
  by \cref{lem:regu,lem:equiv}. Therefore, \cref{eq:xy} follows from the
  standard approximation estimate (see \cite[Chapter~14]{Brenner2008})
  \begin{align*}
    &
    \inf_{w_\tau \in W_\tau^*}
    \nm{\I_{t_j-}^\beta v - w_\tau}_{H^{\gamma_0}(-T,t_j)} \\
    \leqslant{} &
    C_{\alpha,\beta,m,\gamma,\sigma,T} \,
    \tau_j^{\min\{\alpha+\beta-\gamma_0,m-\gamma_0\}}
    \nm{\I_{t_j-}^\beta v}_{H^{\min\{\alpha+\beta,m\}}(-T,t_j)}.
  \end{align*}
  This concludes the proof of the lemma.
\end{proof}
\begin{rem}
  Observe that the constant in \cref{eq:xy} is independent of $ t_j $, which is
  crucial in our analysis.
\end{rem}

\begin{lem}
  \label{lem:I-Q}
  If $ v \in H^{1+\gamma_0}(0,T) $ and $ v' \in C(0,T] $, then
  \begin{align}
    \snm{(v-Q_\tau v)'}_{H^{\gamma_0}(0,t_j)} &
    \lesssim \snm{(I-P_\tau)v'}_{H^{\gamma_0}(0,t_j)},
    \label{eq:I-Q-g'-frac} \\
    \nm{(v-Q_\tau v)'}_{L^2(0,t_j)} &
    \lesssim \tau_j^{\gamma_0} \snm{(I-P_\tau)v'}_{H^{\gamma_0}(0,t_j)},
    \label{eq:I-Q-g'-L2} \\
    \nm{v-Q_\tau v}_{L^2(0,t_j)} &
    \lesssim \tau_j^{\min\{1+\gamma_0,m-\gamma_0\}}
    \snm{(I-P_\tau) v'}_{H^{\gamma_0}(0,t_j)},
    \label{eq:I-Q-g-L2} \\
    \nm{v-Q_\tau v}_{H^{-\gamma_0}(0,t_j)} &
    \lesssim \tau_j^{\min\{\gamma,m-\gamma_0\}}
    \snm{(I-P_\tau) v'}_{H^{\gamma_0}(0,t_j)},
    \label{eq:I-Q-g-dual}
  \end{align}
  for each $ 1 \leqslant j \leqslant J $. Moreover, if $ m \geqslant 2 $, then
  \begin{equation}
    \label{eq:I-Q-g-inf}
    \snm{(v-Q_\tau v)(t_j)} \lesssim \tau_j^{1/2+\gamma_0}
    \snm{(I-P_\tau)v'}_{H^{\gamma_0}(0,t_j)}.
  \end{equation}
\end{lem}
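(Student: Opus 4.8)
The plan is to exploit the Petrov–Galerkin structure of $Q_\tau$ and reduce everything to a single duality argument. Write $g:=v-Q_\tau v$ and $\eta:=(I-P_\tau)v'$. Since $\D_{0+}^{2\gamma_0}$ is causal, $(Q_\tau v)|_{(0,t_j)}$ depends only on $v|_{(0,t_j)}$, so it suffices to prove each estimate with $T$ replaced by $t_j$. The key structural observation is that $\phi\mapsto\phi'$ is a bijection from $\{\phi\in C([0,t_j]):\phi(0)=v(0),\ \phi|_{I_i}\in P_m(I_i)\}$ onto $W_\tau$ (over $(0,t_j)$), with inverse $\psi\mapsto v(0)+\I_{0+}^1\psi$; hence the defining relations of $Q_\tau v$ say exactly that $(Q_\tau v)'$ is the Galerkin projection $\Pi_\tau v'$ onto $W_\tau$ for the bilinear form $b(p,q):=\dual{\D_{0+}^{2\gamma_0}p,q}_{H^{\gamma_0}(0,t_j)}$, so that $g'=(I-\Pi_\tau)v'$, $g(0)=0$, and $g=\I_{0+}^1 g'$. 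Because $b$ is bounded and coercive on $H^{\gamma_0}(0,t_j)$ (this is what underlies the well-posedness asserted via \cref{lem:coer}), $\Pi_\tau$ is quasi-optimal, and taking $P_\tau v'\in W_\tau$ as competitor yields \cref{eq:I-Q-g'-frac}:
\[
  \snm{g'}_{H^{\gamma_0}(0,t_j)}\lesssim\inf_{w_\tau\in W_\tau}\snm{v'-w_\tau}_{H^{\gamma_0}(0,t_j)}\leqslant\snm{\eta}_{H^{\gamma_0}(0,t_j)}.
\]

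For \cref{eq:I-Q-g'-L2,eq:I-Q-g-L2,eq:I-Q-g-dual} I would run one Aubin–Nitsche argument. Given $\phi$ in the relevant test space ($L^2(0,t_j)$ for the $L^2$-norms, $H^{\gamma_0}(0,t_j)$ for the $H^{-\gamma_0}$-norm), I use $g=\I_{0+}^1 g'$, the adjointness $\dual{\I_{0+}^\mu p,q}=\dual{p,\I_{t_j-}^\mu q}$, the identity $\D_{t_j-}^{2\gamma_0}\I_{t_j-}^{2\gamma_0}=\mathrm{id}$, and $\dual{\D_{0+}^{2\gamma_0}p,q}=\dual{p,\D_{t_j-}^{2\gamma_0}q}$ to rewrite
\[
  \dual{g,\phi}_{(0,t_j)}=\dual{\D_{0+}^{2\gamma_0}g',\,\I_{t_j-}^{\gamma}\phi}_{H^{\gamma_0}(0,t_j)},\qquad
  \dual{g',\phi}_{(0,t_j)}=\dual{\D_{0+}^{2\gamma_0}g',\,\I_{t_j-}^{2\gamma_0}\phi}_{H^{\gamma_0}(0,t_j)}.
\]
Subtracting an arbitrary $w_\tau\in W_\tau$ by Galerkin orthogonality, bounding $\dual{\D_{0+}^{2\gamma_0}g',\zeta-w_\tau}\lesssim\snm{g'}_{H^{\gamma_0}(0,t_j)}\,\ssnm{\zeta-w_\tau}_{H^{\gamma_0}(0,t_j)}$ (causality lets one pass to the $\ssnm$-seminorm), taking the infimum over $w_\tau$, and invoking \cref{lem:xy}: with $\beta=2\gamma_0$ and $\phi\in L^2$ one gets the exponent $\min\{\gamma_0,m-\gamma_0\}=\gamma_0$; with $\beta=\gamma$ and $\phi\in L^2$ one gets $\min\{\gamma-\gamma_0,m-\gamma_0\}=\min\{1+\gamma_0,m-\gamma_0\}$; with $\beta=\gamma$ and $\phi\in H^{\gamma_0}$ one gets $\min\{\gamma,m-\gamma_0\}$ — precisely the three claimed powers (here I use $\gamma-\gamma_0=1+\gamma_0$ and $\gamma=2\gamma_0+1$, and \cref{lem:equiv} together with $t_j\leqslant T$ to absorb the $\snm{\cdot}_{H^\varrho}$ factor of \cref{lem:xy} into $\nm{\phi}$). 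Replacing $\snm{g'}_{H^{\gamma_0}(0,t_j)}$ by $\snm{\eta}_{H^{\gamma_0}(0,t_j)}$ via \cref{eq:I-Q-g'-frac} finishes these three estimates.

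For \cref{eq:I-Q-g-inf} I would use the elementary identity $\tau_j\,g(t_j)=\int_{I_j}g(t)\,\mathrm{d}t+\int_{I_j}(t-t_{j-1})g'(t)\,\mathrm{d}t$ (integration by parts on $I_j$, which involves nothing about $g(t_{j-1})$), giving $\snm{g(t_j)}\lesssim\tau_j^{-1/2}\nm{g}_{L^2(I_j)}+\tau_j^{1/2}\nm{g'}_{L^2(I_j)}$; then bound $\nm{g}_{L^2(I_j)}\leqslant\nm{g}_{L^2(0,t_j)}$ by \cref{eq:I-Q-g-L2} and $\nm{g'}_{L^2(I_j)}\leqslant\nm{g'}_{L^2(0,t_j)}$ by \cref{eq:I-Q-g'-L2}. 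When $m\geqslant2$ one has $\min\{1+\gamma_0,m-\gamma_0\}=1+\gamma_0$, so both contributions are $\lesssim\tau_j^{1/2+\gamma_0}\snm{\eta}_{H^{\gamma_0}(0,t_j)}$ — this is exactly where the hypothesis $m\geqslant2$ is needed.

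I expect the main obstacle to be the boundedness step $\dual{\D_{0+}^{2\gamma_0}g',\zeta-w_\tau}\lesssim\snm{g'}_{H^{\gamma_0}(0,t_j)}\,\ssnm{\zeta-w_\tau}_{H^{\gamma_0}(0,t_j)}$ together with the supporting fractional-calculus bookkeeping: the identity $\D_{t_j-}^{2\gamma_0}\I_{t_j-}^{2\gamma_0}=\mathrm{id}$, the adjointness relations for $\I_{0+}^\mu$/$\I_{t_j-}^\mu$ and $\D_{0+}^{2\gamma_0}$/$\D_{t_j-}^{2\gamma_0}$, and the mapping property of $\D_{0+}^{2\gamma_0}$ between the relevant fractional Sobolev spaces on $(0,t_j)$ with $t_j$-independent constants; and then choosing $\beta$ in \cref{lem:xy} so as to land on exactly the stated exponents without losing a half-order at the interval endpoints.
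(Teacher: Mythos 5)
Your proposal is correct and follows essentially the same route as the paper: Galerkin orthogonality plus the coercivity of \cref{lem:coer} (a C\'ea-type quasi-optimality with competitor $P_\tau v'$) for \cref{eq:I-Q-g'-frac}, then Aubin--Nitsche duality arguments writing $\dual{g^{(i)},\phi}$ as $\dual{\D_{0+}^{2\gamma_0}g',\I_{t_j-}^{\beta}\phi-w_\tau}_{H^{\gamma_0}(0,t_j)}$ and invoking \cref{lem:43,lem:xy} with exactly the values of $\beta$ you list for \cref{eq:I-Q-g'-L2,eq:I-Q-g-L2,eq:I-Q-g-dual}. The only (immaterial) difference is in \cref{eq:I-Q-g-inf}, where the paper uses $\snm{g(t_j)}^2=2\int_0^{t_j}gg'\leqslant 2\nm{g}_{L^2(0,t_j)}\nm{g'}_{L^2(0,t_j)}$ instead of your local averaging identity on $I_j$; both combine \cref{eq:I-Q-g'-L2,eq:I-Q-g-L2} in the same way and give the same exponent.
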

\begin{proof}
  Set $ g := v-Q_\tau v $ and let us first prove \cref{eq:I-Q-g'-frac}. Observing
  that the definition of $ Q_\tau $ implies
  \[
    \dual{
      \D_{0+}^{2\gamma_0}(v-Q_\tau v)',
      (Q_\tau v)'- P_\tau v'
    }_{H^{\gamma_0}(0,t_j)} = 0,
  \]
  we obtain
  \[
    \begin{aligned}
      {}&
      \dual{
          \D_{0+}^{2\gamma_0}\big( (Q_\tau v)'-P_\tau v' \big),
          (Q_\tau v)'-P_\tau v'
        }_{H^{\gamma_0}(0,t_j)} \\
      ={}&
      \dual{
          \D_{0+}^{2\gamma_0}( v'-P_\tau v'),
          (Q_\tau v)'-P_\tau v'
        }_{H^{\gamma_0}(0,t_j)}.
    \end{aligned}
  \]
  Therefore, using \cref{lem:coer} yields
  \[
    \snm{(Q_\tau v)'-P_\tau v'}_{H^{\gamma_0}(0,t_j)}
    \lesssim \snm{ (I-P_\tau) v'}_{H^{\gamma_0}(0,t_j)},
  \]
  and so \cref{eq:I-Q-g'-frac} follows from the triangle inequality
  \[
    \snm{g'}_{H^{\gamma_0}(0,t_j)} \leqslant
    \snm{(Q_\tau v)' - P_\tau v'}_{H^{\gamma_0}(0,t_j)} +
    \snm{(I-P_\tau) v'}_{H^{\gamma_0}(0,t_j)}.
  \]

  Then let us prove \cref{eq:I-Q-g'-L2}. Since \cref{lem:coer} implies
  \[
    \nm{g'}_{L^2(0,t_j)}^2 =
    \dual{g',\D^{2\gamma_0}_{t_j-}\I^{2\gamma_0}_{t_j-} g'}_{(0,t_j)} =
    \dual{
      \D_{0+}^{2\gamma_0}g',
      \I^{2\gamma_0}_{t_j-} g'
    }_{H^{\gamma_0}(0,t_j)},
  \]
  the definition of $ Q_\tau $ implies that
  \[
    \nm{g'}_{L^2(0,t_j)}^2 =
    \dual{
      \D_{0+}^{2\gamma_0} g',
      \I^{2\gamma_0}_{t_j-} g'-w_\tau
    }_{H^{\gamma_0}(0,t_j)}
  \]
  for all $ w_\tau \in W_\tau $. Therefore, by \cref{lem:43,lem:xy} we obtain
  \begin{align*}
    \nm{g'}_{L^2(0,t_j)}^2 & \lesssim
    \snm{g'}_{H^{\gamma_0}(0,t_j)} \inf_{w_\tau \in W_\tau}
    \ssnm{\I_{t_j-}^{2\gamma_0} g' -w_\tau}_{H^{\gamma_0}(0,t_j)} \\
    & \lesssim \snm{g'}_{H^{\gamma_0}(0,t_j)}
    \tau_j^{\gamma_0} \nm{g'}_{L^2(0,t_j)}.
  \end{align*}
  It follows that
  \[
    \nm{g'}_{L^2(0,t_j)} \lesssim \tau_j^{\gamma_0}
    \snm{g'}_{H^{\gamma_0}(0,t_j)},
  \]
  which, together with \cref{eq:I-Q-g'-frac}, proves estimate
  \cref{eq:I-Q-g'-L2}.

  Analogously, we can obtain \cref{eq:I-Q-g-L2,eq:I-Q-g-dual}. Since $ g(0) = 0
  $, using integration by parts gives
  \[
    \snm{g(t_j)}^2 = \int_0^{t_j} 2g(t)g'(t) \, \mathrm{d}t
    \leqslant 2 \nm{g}_{L^2(0,t_j)} \nm{g'}_{L^2(0,t_j)}.
  \]
  Therefore, combining \cref{eq:I-Q-g'-L2,eq:I-Q-g-L2} proves
  \cref{eq:I-Q-g-inf}. This completes the proof.
\end{proof}

\begin{lem}
  \label{lem:I-Q-inf-1}
  If $ m=1 $ and $ v \in H^{1+\gamma_0}(0,T) $, then
  \begin{equation}
    \label{eq:I-Q-inf-1}
    \frac{\snm{(v-Q_\tau v)(t_j)}}
    {\snm{(v-Q_\tau v)'}_{H^{\gamma_0}(0,t_j)}}
    \lesssim\left\{
      \begin{aligned}
        &\tau_j^{\gamma_0+1/2}
        & \text{ if }  \gamma_0 < 1/4, \\
        &\sqrt{1+\ln(t_j/\tau_j)} \, \tau_j^{1-\gamma_0}
        & \text{ if } \gamma_0 = 1/4, \\
        &\tau_j^{1-\gamma_0}
        & \text{ if } \gamma_0 > 1/4,
      \end{aligned}
    \right.
  \end{equation}
  for each $ 1 \leqslant j \leqslant J $.
\end{lem}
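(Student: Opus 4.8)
Set $g:=v-Q_\tau v$, so that $g(0)=0$.

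\medskip
\noindent\textbf{Step 1: reduction to an approximation problem.} The plan is to estimate $g(t_j)$ by testing $g'$ against the constant function $\mathbf 1\equiv 1$ on $(0,t_j)$ and using the orthogonality built into $Q_\tau$. Since $g(0)=0$, we have $g(t_j)=\dual{g',\mathbf 1}_{(0,t_j)}$. Writing $\mathbf 1=\D_{t_j-}^{2\gamma_0}\psi$ with $\psi:=\I_{t_j-}^{2\gamma_0}\mathbf 1$, i.e.\ $\psi(t)=(t_j-t)^{2\gamma_0}/\Gamma(1+2\gamma_0)$ (here $2\gamma_0=\gamma-1\in(0,1)$), and integrating by parts for the Riemann--Liouville operators (as in the proof of \cref{lem:I-Q}), we get $g(t_j)=\dual{\D_{0+}^{2\gamma_0}g',\psi}_{H^{\gamma_0}(0,t_j)}$. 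Since $m=1$, the test space $W_\tau$ consists of piecewise constants, so the defining relation of $Q_\tau$, localised to $(0,t_j)$ exactly as in the proof of \cref{lem:I-Q}, gives $g(t_j)=\dual{\D_{0+}^{2\gamma_0}g',\psi-w_\tau}_{H^{\gamma_0}(0,t_j)}$ for every $w_\tau\in W_\tau$. Estimating this pairing by duality and bounding $\D_{0+}^{2\gamma_0}g'$ in $H^{-\gamma_0}(0,t_j)$ by $\snm{g'}_{H^{\gamma_0}(0,t_j)}$ with a $t_j$-independent constant, as in the proof of \cref{lem:I-Q}, we arrive at
\[
  \snm{g(t_j)}\lesssim\snm{g'}_{H^{\gamma_0}(0,t_j)}\,\inf_{w_\tau\in W_\tau}\snm{\psi-w_\tau}_{H^{\gamma_0}(0,t_j)}.
\]
It therefore suffices to bound the rate at which $\psi$ can be approximated by piecewise constants on the grid in the $H^{\gamma_0}$-seminorm. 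Note that no continuity of $v'$ is used here, which accounts for the weaker hypotheses compared with \cref{lem:I-Q}; note also that applying \cref{lem:xy} to $\psi$ is not sharp enough, as it would always leave a spurious logarithmic factor, so a direct argument is needed.

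\medskip
\noindent\textbf{Step 2: the direct approximation argument.} Take $w_\tau$ to be the piecewise constant equal to $\psi(t_i)$ on each $I_i$ (so $w_\tau\equiv 0$ on the last interval $I_j$). Extending $\psi-w_\tau$ by zero and using the Gagliardo form of $\snm{\cdot}_{H^{\gamma_0}(0,t_j)}$ from \cref{lem:equiv}, split the resulting double integral into the three sums $E_1,E_2,E_3$ of the proof of \cref{lem:frac_appro} (contributions from one subinterval, from two distinct subintervals, and from a subinterval paired with the complement of $(0,t_j)$). The parts of $E_1,E_2,E_3$ that involve $I_j$ are treated directly: after the affine change of variable $t=t_j-\tau_j\hat t$ the function on $I_j$ becomes $(\tau_j^{2\gamma_0}/\Gamma(1+2\gamma_0))\,\hat t^{2\gamma_0}$ on $(0,1)$, and a scaling computation shows each of these contributions is $\lesssim\tau_j^{1+2\gamma_0}$, with no logarithm. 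On $[0,t_{j-1}]$ the function $\psi$ is smooth with $\psi'(t)=-\tfrac{2\gamma_0}{\Gamma(1+2\gamma_0)}(t_j-t)^{2\gamma_0-1}$, and the remaining parts of $E_1,E_2,E_3$ — involving only $I_1,\dots,I_{j-1}$ and the complement — are dominated by the bound of \cref{lem:frac_appro} applied to $\psi$ on the grid $\{t_0,\dots,t_{j-1}\}$, namely by $\sum_{i=1}^{j-1}\tau_i^{2-2\gamma_0}\inf_{0\leqslant\delta<1}\tfrac{t_i^{-\delta}}{1-\delta}\nm{\psi'}_{L_\delta^2(I_i)}^2$.

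\medskip
\noindent\textbf{Step 3: evaluating the sum and assembling.} Since $\nm{\psi'}_{L_\delta^2(I_i)}^2\lesssim\int_{I_i}t^\delta(t_j-t)^{4\gamma_0-2}\,\mathrm{d}t$, the choice $\delta=0$ bounds the sum by $\sum_{i=1}^{j-1}\tau_i^{2-2\gamma_0}\int_{I_i}(t_j-t)^{4\gamma_0-2}\,\mathrm{d}t$, which for $\sigma=1$ equals $\tau_j^{2-2\gamma_0}\int_{\tau_j}^{t_j}u^{4\gamma_0-2}\,\mathrm{d}u$; an elementary evaluation of this integral gives, according as $\gamma_0<1/4$, $\gamma_0=1/4$, or $\gamma_0>1/4$, the bounds $\lesssim\tau_j^{1+2\gamma_0}$, $\lesssim\tau_j^{3/2}\ln(t_j/\tau_j)$, and $\lesssim\tau_j^{2-2\gamma_0}$ (the $t_j$-power in the last case being absorbed by $t_j\leqslant T$). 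For graded grids the same three outcomes are obtained after inserting the inequality $\tau_i<\sigma 2^{\sigma-1}J^{-1}T^{1/\sigma}t_{i-1}^{1-1/\sigma}$ ($2\leqslant i\leqslant j$) from the proof of \cref{lem:graded}, taking $\delta\neq 0$ on the first few subintervals if necessary. Adding the $\tau_j^{1+2\gamma_0}$ contribution of $I_j$ and taking square roots, one obtains $\inf_{w_\tau}\snm{\psi-w_\tau}_{H^{\gamma_0}(0,t_j)}\lesssim\tau_j^{1/2+\gamma_0}+\tau_j^{1-\gamma_0}$ when $\gamma_0\neq 1/4$, and $\lesssim\sqrt{1+\ln(t_j/\tau_j)}\,\tau_j^{3/4}$ when $\gamma_0=1/4$; since $\tau_j^{1/2+\gamma_0}\lesssim\tau_j^{1-\gamma_0}$ for $\gamma_0>1/4$ and $\tau_j^{1-\gamma_0}\lesssim\tau_j^{1/2+\gamma_0}$ for $\gamma_0<1/4$ (using $\tau_j\leqslant T$), this yields the three cases of \eqref{eq:I-Q-inf-1}. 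The hardest part is precisely this last step: obtaining a constant independent of $t_j$ (apart from the logarithm) forces one to bypass \cref{lem:xy}, to keep the minimum over $\delta$, and to treat the singular last interval by exact scaling, and it is the borderline integrability exponent $4\gamma_0-2=-1$, i.e.\ $\gamma_0=1/4$, that produces the logarithmic factor.
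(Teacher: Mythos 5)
Your proof follows the paper's argument exactly: the paper likewise reduces $\snm{(v-Q_\tau v)(t_j)}$ to $\snm{(v-Q_\tau v)'}_{H^{\gamma_0}(0,t_j)}\,\inf_{w_\tau\in W_\tau}\snm{G-w_\tau}_{H^{\gamma_0}(0,t_j)}$ with $G(t)=(t_j-t)^{2\gamma_0}/\Gamma(1+2\gamma_0)$ (your $\psi$, up to the normalizing constant) via the same duality/orthogonality step, and then appeals to ``the similar techniques as that used in \cref{lem:frac_appro,lem:graded}'' for the piecewise-constant approximation estimate. Your Steps 2--3 simply carry out that ``tedious but straightforward calculation'' explicitly, and the result---including the factor $t_j^{2\gamma_0-1/2}$ absorbed into the constant when $\gamma_0>1/4$, and the borderline logarithm at $\gamma_0=1/4$---agrees with the paper's stated intermediate bound.
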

\begin{proof}
  Putting
  \[
    G(t) := \frac{(t_j-t)^{2\gamma_0}}{\Gamma(1-2\gamma_0)},
    \quad 0 < t < t_j,
  \]
  by a direct computing we obtain
  \begin{align*}
    (v-Q_\tau v)(t_j) =
    \dual{\D_{0+}^{2\gamma_0} (v-Q_\tau v)', G}_{H^{\gamma_0}(0,t_j)}.
  \end{align*}
  From the definition of $ Q_\tau $ it follows that
  \begin{align*}
    (v-Q_\tau v)(t_j) =
    \dual{
      \D_{0+}^{2\gamma_0} (v-Q_\tau v)', G-w_\tau
    }_{H^{\gamma_0}(0,t_j)}
  \end{align*}
  for all $ w_\tau \in W_\tau $. Hence \cref{lem:coer} implies
  \[
    \snm{(v-Q_\tau v)(t_j)} \lesssim
    \snm{(v-Q_\tau v)'}_{H^{\gamma_0}(0,t_j)}
    \inf_{w_\tau \in W_\tau} \snm{G-w_\tau}_{H^{\gamma_0}(0,t_j)}.
  \]
  By the similar techniques as that used in \cref{lem:frac_appro,lem:graded}, a
  tedious but straightforward calculation yields
  \[
    \inf_{w_\tau \in W_\tau} \snm{G-w_\tau}_{H^{\gamma_0}(0,t_j)} \lesssim
    \begin{cases}
      \tau_j^{\gamma_0+1/2} &
      \text{ if }  \gamma_0 < 1/4, \\
      \sqrt{1+\ln(t_j/\tau_j)} \, \tau_j^{1-\gamma_0} &
      \text{ if } \gamma_0 = 1/4, \\
      t_j^{2\gamma_0-1/2} \tau_j^{1-\gamma_0} &
      \text{ if } \gamma_0 > 1/4.
    \end{cases}
  \]
  Combining the above two estimates gives \cref{eq:I-Q-inf-1} and thus concludes
  the proof.
\end{proof}

\subsection{Main Results}
In the rest of this paper, we assume that $ u $ and $ U $ are the solutions to
problem \cref{eq:weak_sol} and \cref{eq:algo}, respectively. Moreover, for each
$ 1 \leqslant j \leqslant J $ we define $ \epsilon_j $ as follows: if $ m = 1 $
then set
\[
  \epsilon_j :=
  \begin{cases}
    \tau_j^{1/2+\gamma_0} & \text{ if } \gamma_0 < 1/4, \\
    \sqrt{1+\ln(t_j/\tau_j)} \, \tau_j^{1-\gamma_0} & \text{ if } \gamma_0 = 1/4, \\
    \tau_j^{1-\gamma_0} & \text{ if } \gamma_0 > 1/4,
  \end{cases}
\]
and if $ m \geqslant 2 $ then set
\[
  \epsilon_j := \tau_j^{1/2+\gamma_0}.
\]
\begin{thm}
  \label{thm:stabi}
  It holds that
  \begin{equation}
    \label{eq:stabi}
    \begin{aligned}
      & \snm{U'}_{H^{{\gamma_0}}(0,t_j;L^2(\Omega))} +
      \nm{U(t_j)}_{H_0^1(\Omega)} \\
      \lesssim & \nm{ u_0}_{H_0^1(\Omega)} +
      t_j^{1/2-\gamma_0}\nm{u_1}_{L^2(\Omega)} +
      \nm{f}_{H^{-\gamma_0}(0,t_j;L^2(\Omega))}
    \end{aligned}
  \end{equation}
  for each $ 1 \leqslant j \leqslant J $.
\end{thm}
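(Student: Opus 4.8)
The plan is to establish \cref{eq:stabi} by a discrete energy argument, testing the scheme with a localized copy of the time derivative $U'$. The first step is to remove the initial-velocity term. Since $\Pi_h u_1$ is independent of $t$ and $1-\gamma=-2\gamma_0$ with $0<\gamma_0<1/2$, a direct computation gives $\D_{0+}^\gamma(t\Pi_h u_1)=\Gamma(2-\gamma)^{-1}t^{1-\gamma}\Pi_h u_1\in H^{-\gamma_0}(0,T;L^2(\Omega))$. Writing $w:=U-U(0)$ (so $w(0)=0$ and $w'=U'$), using the elementary Riemann--Liouville identity $\D_{0+}^\gamma w=\D_{0+}^{2\gamma_0}w'$, and moving the $t\Pi_h u_1$-term to the right-hand side, the scheme \cref{eq:algo} becomes
\[
  \dual{\D_{0+}^{2\gamma_0}U',V}_{H^{\gamma_0}(0,T;L^2(\Omega))}+\dual{\nabla U,\nabla V}_{\Omega\times(0,T)}=\dual{\widetilde f,V}_{H^{\gamma_0}(0,T;L^2(\Omega))}\qquad\forall\,V\in W_{h,\tau},
\]
where $\widetilde f:=f+\Gamma(2-\gamma)^{-1}t^{1-\gamma}\Pi_h u_1$. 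A scaling argument (together with \cref{lem:equiv}) and the $L^2(\Omega)$-stability of $\Pi_h$ show $\nm{\widetilde f}_{H^{-\gamma_0}(0,t_j;L^2(\Omega))}\lesssim\nm{f}_{H^{-\gamma_0}(0,T;L^2(\Omega))}+t_j^{1/2-\gamma_0}\nm{u_1}_{L^2(\Omega)}$; this is the source of the factor $t_j^{1/2-\gamma_0}$ in \cref{eq:stabi}.

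Fix $1\leqslant j\leqslant J$ and take $V:=U'\chi_{(0,t_j)}$. Because $U\in M_{h,\tau}$ we have $U'|_{I_i}\in P_{m-1}(I_i;S_h)$ for each $i$ and $t_j$ is a grid node, so $V\in W_{h,\tau}$. Since $\D_{0+}^{2\gamma_0}$ is causal (only $U'|_{(0,t_j)}$ pairs against $V$) and $\int_0^{t_j}\dual{\nabla U,\nabla U'}_{\Omega}=\tfrac12\nm{\nabla U(t_j)}_{L^2(\Omega)}^2-\tfrac12\nm{\nabla U(0)}_{L^2(\Omega)}^2$, testing with $V$ yields the energy identity
\[
  \dual{\D_{0+}^{2\gamma_0}U',U'}_{H^{\gamma_0}(0,t_j;L^2(\Omega))}+\tfrac12\nm{\nabla U(t_j)}_{L^2(\Omega)}^2=\tfrac12\nm{\nabla U(0)}_{L^2(\Omega)}^2+\dual{\widetilde f,U'}_{H^{\gamma_0}(0,t_j;L^2(\Omega))}.
\]
Expanding in an orthonormal basis of $L^2(\Omega)$ and applying \cref{lem:coer} componentwise bounds the first term below by $c\,\snm{U'}_{H^{\gamma_0}(0,t_j;L^2(\Omega))}^2$; the duality pairing on the right is at most $\nm{\widetilde f}_{H^{-\gamma_0}(0,t_j;L^2(\Omega))}\,\snm{U'}_{H^{\gamma_0}(0,t_j;L^2(\Omega))}$; and $\nm{\nabla U(0)}_{L^2(\Omega)}=\nm{\nabla R_hu_0}_{L^2(\Omega)}\leqslant\nm{u_0}_{H_0^1(\Omega)}$ by the $H_0^1(\Omega)$-stability of $R_h$. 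A Young inequality absorbs the $\snm{U'}_{H^{\gamma_0}(0,t_j;L^2(\Omega))}$ factor on the right, and together with the bound on $\widetilde f$ (and $\nm{f}_{H^{-\gamma_0}(0,t_j;L^2(\Omega))}\leqslant\nm{f}_{H^{-\gamma_0}(0,T;L^2(\Omega))}$) this gives $\snm{U'}_{H^{\gamma_0}(0,t_j;L^2(\Omega))}^2+\nm{\nabla U(t_j)}_{L^2(\Omega)}^2\lesssim\nm{u_0}_{H_0^1(\Omega)}^2+t_j^{1-2\gamma_0}\nm{u_1}_{L^2(\Omega)}^2+\nm{f}_{H^{-\gamma_0}(0,T;L^2(\Omega))}^2$; taking square roots and invoking Poincaré's inequality ($\nm{U(t_j)}_{H_0^1(\Omega)}\sim\nm{\nabla U(t_j)}_{L^2(\Omega)}$) yields \cref{eq:stabi}.

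The Riemann--Liouville manipulations (the identity $\D_{0+}^\gamma w=\D_{0+}^{2\gamma_0}w'$ for $w(0)=0$ and the causality of $\D_{0+}^{2\gamma_0}$) and the closing Young/Poincaré step are routine. The delicate point is keeping every constant and every power of $t_j$ uniform with respect to $j$ and to the two meshes; concretely this rests on (i) the scaling computation showing $t^{1-\gamma}=t^{-2\gamma_0}$ lies in $H^{-\gamma_0}(0,1)$ and that its $H^{-\gamma_0}(0,t_j)$-norm behaves exactly like $t_j^{1/2-\gamma_0}$, and (ii) the $t_j$-uniform coercivity in \cref{lem:coer}, applied to the discontinuous piecewise polynomial $U'$. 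Both are consistent with the scale-invariance of the $H^{\pm\gamma_0}$ seminorms used throughout the paper, analogous to the role the $t_j$-independence of the constant in \cref{eq:xy} plays elsewhere.
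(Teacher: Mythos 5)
Your proof is correct and follows essentially the same route as the paper: test \cref{eq:algo} with $V=U'\chi_{(0,t_j)}$, use the identity $\D_{0+}^\gamma(U-U(0)-t\Pi_hu_1)=\D_{0+}^{2\gamma_0}(U'-\Pi_hu_1)$, invoke the coercivity and continuity in \cref{lem:coer}, integrate the gradient term by parts, and close with Young's inequality and the stability of $R_h$ and $\Pi_h$. The only (equivalent) difference is that you absorb the initial-velocity term into the source as $\Gamma(2-\gamma)^{-1}t^{-2\gamma_0}\Pi_hu_1$ measured in $H^{-\gamma_0}(0,t_j;L^2(\Omega))$, whereas the paper keeps it as $\D_{0+}^{2\gamma_0}\Pi_hu_1$ and uses $\snm{\Pi_hu_1}_{H^{\gamma_0}(0,t_j;L^2(\Omega))}\lesssim t_j^{1/2-\gamma_0}\nm{u_1}_{L^2(\Omega)}$ --- the same scaling computation seen from the other side of the duality.
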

\begin{rem}
  Due to the linearity of \cref{eq:algo}, the above theorem also implies the
  unique existence of $ U $.
\end{rem}
\begin{rem}
  We recall that $ H^{-\gamma_0}(0,t_j;L^2(\Omega)) $ is the dual space of $
  H^{\gamma_0}(0,t_j;L^2(\Omega)) $, where $ H^{\gamma_0}(0,t_j;L^2(\Omega)) $
  is endowed with the norm $ \snm{\cdot}_{H^{\gamma_0}(0,t_j;L^2(\Omega))} $.
  Using the same technique as that used in the proof of \cref{lem:73}, we easily
  derive that
  \[
    L_{2\gamma_0}^2(0,t_j;L^2(\Omega)) \subset H^{-\gamma_0}(0,t_j;L^2(\Omega)).
  \]
  This indicates that even if $ f $ has singularity at $ t=0 $, problem
  \cref{eq:algo} may also be stable. Moreover, since
  \[
    \snm{v}_{H^{\gamma_0}(0,t_j;L^2(\Omega))} =
    \snm{v}_{H^{\gamma_0}(0,T;L^2(\Omega))}
  \]
  for all $ v \in H^{\gamma_0}(0,T;L^2(\Omega)) $ such that $ v|_{(t_j,T)} = 0
  $, we obtain
  \[
    \nm{f}_{H^{-\gamma_0}(0,t_j;L^2(\Omega))} \leqslant
    \nm{f}_{H^{-\gamma_0}(0,T;L^2(\Omega))}.
  \]
\end{rem}

\begin{thm}
  \label{thm:conv}
  If $ u \in H^{1+\gamma_0}(0,T; H_0^1(\Omega)\cap H^2(\Omega)) $ and $ u'' \in
  L^1(0,T;H^2(\Omega)) $, then
  \begin{align}
    \nm{(u-U)(t_j)}_{H_0^1(\Omega)} &\lesssim
    \eta_{j,1} + \eta_{j,2} + \eta_{j,3} + \eta_{j,5},
    \label{eq:conv_1} \\
    \nm{(u-U)'}_{L^2(0,t_j;L^2(\Omega))} &\lesssim
    \eta_{j,4} + t_j^{\gamma_0}(\eta_{j,2}+\eta_{j,5}),
    \label{eq:conv_2}
  \end{align}
  for each $ 1 \leqslant j \leqslant J $, where
  \begin{align*}
    \eta_{j,1} &:= \nm{(I-R_h)u(t_j)}_{H_0^1(\Omega)}, \\
    \eta_{j,2} &:= \snm{(I-R_h)u'}_{
      H^{\gamma_0}\left( 0,t_j; L^2(\Omega) \right)
    },\\
    \eta_{j,3} &:= \epsilon_j
    \left(
      \sum_{i=1}^j \tau_i^{2-2\gamma_0} \inf_{0 \leqslant \delta < 1}
      \frac{t_i^{-\delta}}{1-\delta}
      \nm{\big( (I-P_\tau)R_hu' \big)'}_{L_\delta^2(0,t_j;H_0^1(\Omega))}
    \right)^{1/2}, \\
    \eta_{j,4} &:=
    \tau_j^{\gamma_0}
    \left(
      \sum_{i=1}^j \tau_i^{2-2\gamma_0}
      \inf_{0 \leqslant \delta < 1}
      \frac{t_i^{-\delta}}{1-\delta}
      \nm{\big((I-P_\tau)R_hu'\big)'}_{L_\delta^2(I_i,L^2(\Omega))}
    \right)^{1/2}, \\
    \eta_{j,5} &:=
    \tau_j^{\min\{\gamma,m-\gamma_0\}}
    \left(
      \sum_{i=1}^j \tau_i^{2-2\gamma_0} \inf_{0 \leqslant \delta < 1}
      \frac{t_i^{-\delta}}{1-\delta}
      \nm{\big((I-P_\tau)\Delta u'\big)'}_{L_\delta^2(I_i;L^2(\Omega))}^2
    \right)^{1/2}.
  \end{align*}
\end{thm}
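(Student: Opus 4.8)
The plan is to estimate $u-U$ by splitting it through the intermediate projection $Q_\tau R_h u$, writing
\[
  u - U = (u - R_h u) + (R_h u - Q_\tau R_h u) + (Q_\tau R_h u - U) =: \rho + \theta + \xi,
\]
where $\rho$ contributes the spatial terms $\eta_{j,1}$ and $\eta_{j,2}$ directly, $\theta$ is controlled by the interpolation estimates of \cref{lem:I-Q} applied to $v = R_h u$, and $\xi \in M_{h,\tau}$ is a discrete function to which the stability estimate \cref{thm:stabi} can be applied. First I would write down the error equation satisfied by $\xi$: subtracting \cref{eq:algo} from the weak form \cref{eq:weak_sol} tested against $V \in W_{h,\tau}$, using that $R_h$ commutes with the Laplacian term in the sense $\dual{\nabla(u - R_h u), \nabla V}_{\Omega\times(0,t_j)} = 0$ for $V \in M_{h,\tau}$, and using the defining orthogonality of $Q_\tau$ (the middle relation in its definition, which kills $\dual{\D_{0+}^{2\gamma_0}(R_h u - Q_\tau R_h u)', w_\tau}_{H^{\gamma_0}}$ against test functions in $W_\tau$, hence against $W_{h,\tau}$). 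The residual driving $\xi$ should then reduce to the spatial consistency term involving $(I - R_h)$ applied inside the fractional derivative, i.e.\ something like $\dual{\D_{0+}^\gamma(R_h u - u), V}$, which after integration by parts (as in the first Remark of \cref{sec:discr}) and the $L^2$-projection structure becomes a term measured by $\snm{(I-P_\tau)\Delta u'}$-type quantities, explaining the appearance of $\Delta u'$ in $\eta_{j,5}$.

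Next I would apply \cref{thm:stabi} to $\xi$ with the residual playing the role of the data $f$ (and zero initial values, since $\xi(0) = Q_\tau R_h u(0) - U(0) = R_h u_0 - R_h u_0 = 0$ and the first-derivative initial datum is absorbed into the consistency), obtaining
\[
  \snm{\xi'}_{H^{\gamma_0}(0,t_j;L^2(\Omega))} + \nm{\xi(t_j)}_{H_0^1(\Omega)}
  \lesssim \nm{\mathrm{residual}}_{H^{-\gamma_0}(0,t_j;L^2(\Omega))}.
\]
The residual norm is then estimated by duality: pairing against an arbitrary $v \in H^{\gamma_0}(0,t_j;L^2(\Omega))$, inserting $P_\tau v$ (or rather the relevant $W_\tau$-projection) for free by the orthogonality already used, and bounding the remainder by the approximation estimates of \cref{lem:frac_appro} together with \cref{lem:xy} to transfer fractional-order regularity; this is where the sums $\sum_i \tau_i^{2-2\gamma_0} \inf_\delta \tfrac{t_i^{-\delta}}{1-\delta}\nm{((I-P_\tau)\Delta u')'}_{L^2_\delta}^2$ in $\eta_{j,5}$ arise, with the prefactor $\tau_j^{\min\{\gamma,m-\gamma_0\}}$ coming from the $\I_{t_j-}^\beta$ approximation in \cref{lem:xy} (this is exactly the estimate \cref{eq:I-Q-g-dual} structure, reused for the spatial consistency term). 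For the $H_0^1$-bound \cref{eq:conv_1} I would use $\nm{(u-U)(t_j)}_{H_0^1} \leqslant \nm{\rho(t_j)}_{H_0^1} + \nm{\theta(t_j)}_{H_0^1} + \nm{\xi(t_j)}_{H_0^1}$, identifying $\nm{\rho(t_j)}_{H_0^1} = \eta_{j,1}$, bounding $\nm{\theta(t_j)}_{H_0^1}$ by \cref{eq:I-Q-g-inf} (when $m\geqslant2$) or \cref{eq:I-Q-inf-1} (when $m=1$) — which is precisely where the factor $\epsilon_j$ and hence $\eta_{j,3}$ enters — and $\nm{\xi(t_j)}_{H_0^1}$ by the stability bound above, yielding $\eta_{j,5}$ (and $\eta_{j,2}$ from the residual's dependence on $(I-R_h)u'$). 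For \cref{eq:conv_2} I would similarly decompose $(u-U)' = \rho' + \theta' + \xi'$, bound $\nm{\rho'}_{L^2(0,t_j;L^2)}$ using the embedding $\snm{\cdot}_{H^{\gamma_0}} \hookrightarrow$ with weight $t_j^{\gamma_0}$ to get $t_j^{\gamma_0}\eta_{j,2}$, bound $\nm{\theta'}_{L^2}$ by \cref{eq:I-Q-g'-L2} giving $\eta_{j,4}$, and bound $\nm{\xi'}_{L^2(0,t_j;L^2)} \lesssim t_j^{\gamma_0}\snm{\xi'}_{H^{\gamma_0}(0,t_j;L^2)}$ (the scaling inequality relating the $L^2$ and $H^{\gamma_0}$ seminorms on an interval of length $t_j$), which again reduces to the residual estimate and produces $t_j^{\gamma_0}\eta_{j,5}$.

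The main obstacle I anticipate is organizing the spatial consistency term cleanly. The operators $R_h$ and $\Pi_h$ do not commute with $\D_{0+}^\gamma$ in any naive way, and $R_h$ is defined by an $H_0^1$-projection while $\Pi_h$ (used on $u_1$) is an $L^2$-projection onto the larger space $\Sigma_h$; reconciling these so that the term $\D_{0+}^\gamma((I-R_h)u - t(I-\Pi_h)u_1)$ tested against $V \in W_{h,\tau}$ collapses — via the integration-by-parts identity from the Remark and the defining property of $R_h$ — into exactly $\eta_{j,5}$ (involving $\Delta u'$, not $u'$ or $u$) requires care with the boundary/jump terms at $t_j$ and with the fact that $\Delta u \notin H_0^1(\Omega)$ in general, so one works in $L^2(\Omega)$ throughout the time-duality argument. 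A secondary technical point is keeping every constant independent of $t_j$ (not just of $T$), which forces the use of \cref{lem:xy} and its remark rather than a crude global approximation estimate; the $\inf_{0\leqslant\delta<1}$ weighting in $\eta_{j,3},\eta_{j,4},\eta_{j,5}$ is the mechanism that makes the singular-solution case work, and I would thread the scale-invariant version of \cref{lem:frac_appro} through every step where an $H^{\gamma_0}$-seminorm of a projection error is bounded.
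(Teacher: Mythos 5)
Your decomposition $u-U=(u-R_hu)+(R_hu-Q_\tau R_hu)+(Q_\tau R_hu-U)$, the use of \cref{lem:I-Q} and \cref{lem:I-Q-inf-1} for the middle piece (giving $\eta_{j,3}$, $\eta_{j,4}$ via \cref{lem:frac_appro}), the energy/stability bound for the discrete piece, and \cref{lem:73} for converting $H^{\gamma_0}$ seminorms into $t_j^{\gamma_0}$-weighted $L^2$ norms are all exactly the paper's route (the paper packages the discrete-piece bound as \cref{lem:conv}, re-running the energy argument of \cref{thm:stabi} with $V=\theta'\chi_{(0,t_j)}$ rather than citing the stability theorem as a black box, but that is a cosmetic difference).

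The one substantive slip is your account of where $\eta_{j,5}$ comes from. You attribute the $\Delta u'$ term to the fractional-derivative consistency residual $\dual{\D_{0+}^\gamma(R_hu-u),V}$ "after integration by parts and the $L^2$-projection structure." That term contains no Laplacian and cannot produce $\Delta u'$: by the $Q_\tau$-orthogonality and the continuity part of \cref{lem:coer} it yields precisely $\eta_{j,2}$ and nothing more. In the paper, $\eta_{j,5}$ arises from the \emph{elliptic} consistency term $\dual{\nabla(u-Q_\tau R_hu),\nabla\theta'}$: the Ritz property of $R_h$ (which commutes with $Q_\tau$) reduces it to $\dual{\nabla(I-Q_\tau)u,\nabla\theta'}$, integration by parts in space gives $-\dual{(I-Q_\tau)\Delta u,\theta'}$, and the dual-norm estimate \cref{eq:I-Q-g-dual} supplies the prefactor $\tau_j^{\min\{\gamma,m-\gamma_0\}}$ together with $\snm{(I-P_\tau)\Delta u'}_{H^{\gamma_0}(0,t_j;L^2(\Omega))}$, which \cref{lem:frac_appro} then converts into the weighted sum in $\eta_{j,5}$. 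You do cite \cref{eq:I-Q-g-dual} as the relevant tool and list $\eta_{j,2}$ separately, so the ingredients are all present; but as written, the step deriving $\eta_{j,5}$ from the time-fractional residual would fail, and you omit any explicit treatment of the $\nabla(u-Q_\tau R_hu)$ term, which is where that estimate must actually be applied.
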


\subsubsection{High Regularity Case}
By \cref{thm:conv}, \cref{lem:ptau} and the standard estimate that
(\cite{Ciarlet2002})
\[
  \nm{(I-R_h)v}_{L^2(\Omega)} + h \nm{(I-R_h)v}_{H_0^1(\Omega)}
  \lesssim h^{n+1} \nm{v}_{H^{n+1}(\Omega)}
\]
for all $ v \in H_0^1(\Omega) \cap H^{n+1}(\Omega) $, we readily conclude the
following convergence estimates.
\begin{thm}
  \label{thm:conv_regu}
  If
  \[
    u \in H^{m+1}( 0,T; H_0^1(\Omega)\cap H^2(\Omega )) \cap
    H^{1+\gamma_0}(0,T;H_0^1(\Omega)\cap H^{n+1}(\Omega)),
  \]
  then
  \begin{equation*}
    \nm{(u-U)(t_j)}_{H_0^1(\Omega)} \lesssim
    \nu_{j,1} + \nu_{j,3} + \nu_{j,4} + \nu_{j,5}
  \end{equation*}
  \begin{equation}
    \nm{(u-U)'}_{L^2(0,t_j;L^2(\Omega))} \lesssim
    \nu_{j,2} + t_j^{\gamma_0}(\nu_{j,3} + \nu_{j,5})
  \end{equation}
  for each $ 1 \leqslant j \leqslant J $, where
  \begin{align*}
    \nu_{j,1} &:=  h^n\nm{u(t_j)}_{H^{n+1}(\Omega)}, \\
    \nu_{j,2} &:= \tau_j^m \nm{u}_{H^{m+1}(0,t_j,H_0^1(\Omega))}, \\
    \nu_{j,3} &:= h^{n+1} \snm{u'}_{H^{\gamma_0}(0,t_j;H^{n+1}(\Omega))}, \\
    \nu_{j,4} &:= \epsilon_j \tau_j^{m-\gamma_0}
    \nm{u}_{H^{m+1}(0,t_j;H_0^1(\Omega))}, \\
    \nu_{j,5} &:=  \tau_j^{
      \min\{\gamma,m-\gamma_0\}-\gamma_0 + m
    } \nm{u}_{H^{m+1}(0,t_j;H^2(\Omega))}.
  \end{align*}
\end{thm}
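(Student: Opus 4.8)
The plan is to derive \cref{thm:conv_regu} as a direct corollary of \cref{thm:conv} by bounding each of the five quantities $\eta_{j,1},\dots,\eta_{j,5}$ in terms of the spatial and temporal approximation powers, using the two ingredients quoted immediately before the statement: the standard finite element estimate for $I-R_h$ and the temporal polynomial approximation estimate \cref{lem:ptau}. First I would handle the purely spatial terms: $\eta_{j,1} = \nm{(I-R_h)u(t_j)}_{H_0^1(\Omega)} \lesssim h^n \nm{u(t_j)}_{H^{n+1}(\Omega)} = \nu_{j,1}$ is immediate from the $R_h$ estimate, and $\eta_{j,2} = \snm{(I-R_h)u'}_{H^{\gamma_0}(0,t_j;L^2(\Omega))}$ is controlled by $h^{n+1}\snm{u'}_{H^{\gamma_0}(0,t_j;H^{n+1}(\Omega))} = \nu_{j,3}$ — here one commutes $R_h$ past the time-fractional seminorm (it acts pointwise in $t$, hence on each Fourier mode $(u',e_k)$ separately) and then applies the spatial estimate under the $H^{\gamma_0}(0,t_j)$ integral in $t$.

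Next I would treat the three terms $\eta_{j,3},\eta_{j,4},\eta_{j,5}$ that mix time and space. The common device is to apply \cref{lem:ptau} coordinatewise in $\Omega$ (i.e. to each Fourier coefficient against an orthonormal basis of the relevant Hilbert space $H_0^1(\Omega)$ or $L^2(\Omega)$), which gives, for $w\in H^m(I_i;X)$,
\[
  \nm{\big((I-P_\tau)w\big)'}_{L^2(I_i;X)} \lesssim \tau_i^{m-1}\nm{w}_{H^m(I_i;X)}.
\]
For $\eta_{j,4}$: inside the sum, drop the weight $t_i^{-\delta}/(1-\delta)$ by choosing $\delta = 0$, so $\tau_i^{2-2\gamma_0}\inf_\delta(\cdots)\nm{((I-P_\tau)R_hu')'}_{L^2_\delta(I_i;L^2)}^2 \lesssim \tau_i^{2-2\gamma_0}\tau_i^{2(m-1)}\nm{R_hu'}_{H^m(I_i;L^2)}^2$; using $H^1$-stability of $R_h$ in the $L^2(\Omega)$ norm (or simply $\nm{R_hu'}\le\nm{u'}$ in $H_0^1$) and $\tau_i\le\tau_j$ for $i\le j$, pulling $\tau_j^{2(m-\gamma_0)}$ out of the sum leaves $\tau_j^{\gamma_0}\cdot\tau_j^{m-\gamma_0}\big(\sum_i\nm{u'}_{H^m(I_i;H_0^1)}^2\big)^{1/2}\lesssim \tau_j^m\nm{u}_{H^{m+1}(0,t_j;H_0^1)} = \nu_{j,2}$. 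For $\eta_{j,3}$ the same estimate applies but with the extra factor $\epsilon_j$ in front and the $H_0^1(\Omega)$-norm on the space side, yielding $\epsilon_j\tau_j^{m-\gamma_0}\nm{u}_{H^{m+1}(0,t_j;H_0^1)} = \nu_{j,4}$ — note $R_h$ is a bounded projection on $H_0^1(\Omega)$, so $\nm{R_hu'}_{H^{m+1}(I_i;H_0^1)}\lesssim\nm{u'}_{H^{m+1}(I_i;H_0^1)}$. For $\eta_{j,5}$ the operator is $(I-P_\tau)\Delta u'$ with values in $L^2(\Omega)$; since $\Delta u' \in L^2(\Omega)$ is controlled by $\nm{u'}_{H^2(\Omega)}$, the identical chain gives $\tau_j^{\min\{\gamma,m-\gamma_0\}}\cdot\tau_j^{m-\gamma_0}\nm{u}_{H^{m+1}(0,t_j;H^2)} = \nu_{j,5}$, matching the stated exponent $\min\{\gamma,m-\gamma_0\}-\gamma_0+m$.

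Assembling: \cref{eq:conv_1} becomes $\nm{(u-U)(t_j)}_{H_0^1}\lesssim\eta_{j,1}+\eta_{j,2}+\eta_{j,3}+\eta_{j,5}\lesssim\nu_{j,1}+\nu_{j,3}+\nu_{j,4}+\nu_{j,5}$, and \cref{eq:conv_2} becomes $\nm{(u-U)'}_{L^2(0,t_j;L^2)}\lesssim\eta_{j,4}+t_j^{\gamma_0}(\eta_{j,2}+\eta_{j,5})\lesssim\nu_{j,2}+t_j^{\gamma_0}(\nu_{j,3}+\nu_{j,5})$, which are exactly the claimed bounds. The only genuinely delicate points — and the ones I would write out carefully — are (i) justifying that $R_h$ and the $H^{\gamma_0}$-in-time seminorm commute (so that the spatial estimate can be inserted under the time integral), which follows because $R_h$ acts only in space and the vector-valued Sobolev norms are defined modewise via an orthonormal basis; and (ii) the bookkeeping that converts the $\delta$-weighted, $\tau_i$-indexed sums of $\eta_{j,3},\eta_{j,4},\eta_{j,5}$ into clean powers of $\tau_j$ — this uses $\delta=0$, monotonicity $\tau_i\le\tau_j$, and $\big(\sum_{i=1}^j\nm{v}_{H^m(I_i;X)}^2\big)^{1/2}=\nm{v}_{H^m(0,t_j;X)}\le\nm{v}_{H^{m+1}(0,t_j;X)}$. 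Everything else is routine substitution, so no single step is a serious obstacle; the main care is simply keeping the weight $\delta$ and the mesh-grading factors from leaking extra $t_j$-dependence beyond what the $\nu_{j,k}$ already carry.
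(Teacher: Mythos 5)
Your proposal is correct and follows exactly the route the paper intends: the paper gives no written proof, merely asserting that \cref{thm:conv_regu} follows from \cref{thm:conv}, \cref{lem:ptau} (applied modewise to vector-valued functions), and the standard Ritz-projection estimate, which is precisely the term-by-term bounding of $\eta_{j,1},\dots,\eta_{j,5}$ by $\nu_{j,1},\dots,\nu_{j,5}$ that you carry out. Your handling of the $\delta$-weighted sums (taking $\delta=0$, using $\tau_i\leqslant\tau_j$ for $i\leqslant j$ since $\sigma\geqslant 1$) and the stability of $R_h$ supplies the details the paper leaves implicit.
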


\begin{rem}\label{rem:conv_regu}
  Assume that $ u $ is sufficiently regular. The above theorem indicates the
  following results. If $ m=1 $ then
  \[
    \max_{1 \leqslant j \leqslant J}
    \nm{(u-U)(t_j)}_{H_0^1(\Omega)} = \mathcal O(h^n) +
    \begin{cases}
      \mathcal O\left(J^{-3/2}\right) & \text{ if } \gamma_0 < 1/4, \\
      \mathcal O\left((\ln{}J)^{1/2} J^{-3/2}\right) & \text{ if } \gamma_0 = 1/4, \\
      \mathcal O\left(J^{-2(1-\gamma_0)}\right) & \text{ if } \gamma_0 > 1/4.
    \end{cases}
  \]
  If $ m \geqslant 2 $ then
  \[
    \max_{1 \leqslant j \leqslant J}
    \nm{(u-U)(t_j)}_{H_0^1(\Omega)} =
    \mathcal O\left( h^n \right) + \mathcal O\left( J^{-m-1/2} \right).
  \]
  Moreover,
  \[
    \nm{(u-U)'}_{L^2(0,T;L^2(\Omega))} =
    \mathcal O\left( h^{n+1} \right) +
    \mathcal O\left( J^{-m} \right)
  \]
  for all $ m \in \mathbb N_{>0} $.
\end{rem}

\subsubsection{Singularity Case}
Let us first consider the following fractional ordinary problem:
\begin{equation}
  \left\{
    \begin{aligned}
      & \D_{0+}^\gamma (y-c_0-tc_1) + \lambda y = g \text{ in } (0,T), \\
      & y(0) = c_0, \quad y'(0) = c_1,
    \end{aligned}
  \right.
\end{equation}
where $ c_0,c_1 \in \mathbb R $, $ \lambda \in \mathbb R_{>0} $, and $ g $ is a
given function. It is well known that we can turn the above problem into the
following integral form:
\[
  y(t) = c_0 + c_1t + \frac1{\Gamma(\gamma)}
  \int_0^t (t-s)^{\gamma-1} \big( g(s) -\lambda y(s) \big) \, \mathrm{d}s,
  \quad 0 < t < T.
\]
Suppose that $ g $ is sufficiently smooth on $ [0,T] $. It is clear that if $
g(0) \neq \lambda c_0 $, then $ y $ is dominated by
\[
  \frac{g(0) - \lambda c_0}{\Gamma(1+\gamma)} t^\gamma
\]
near $ t = 0 $. This motivates us to investigate the accuracy of $ U $ in the
case that $ u $ is of the form
\begin{equation}
  \label{eq:singu_u}
  u(x,t) = t^r \phi(x), \quad (x,t) \in \Omega \times (0,T),
\end{equation}
where $ \phi \in H_0^1(\Omega) \cap H^{n+1}(\Omega) $ and $ 1 < r \leqslant m +
1/2 $ with $ r \not\in \mathbb N $.

To this end, let us introduce $ \varepsilon_j $ for each $ 1 \leqslant j
\leqslant J $ as follows: if $ m=1 $ then define
\[
  \varepsilon_j :=
  \begin{cases}
    J^{-(1/2+\gamma_0)} & \text{ if } \gamma_0 < 1/4, \\
    \sqrt{1+\ln(t_j/\tau_j)} J^{-(1-\gamma_0)} & \text{ if } \gamma_0 = 1/4, \\
    J^{-(1-\gamma_0)} & \text{ if } \gamma_0 > 1/4,
  \end{cases}
\]
and if $ m \geqslant 2 $ then define
\[
  \varepsilon_j := J^{-(1/2+\gamma_0)}.
\]
We also set
\begin{equation}
  \label{eq:sigma*} \sigma^* := \frac{2m+1-\gamma}{2r-\gamma}.
\end{equation}
By \cref{thm:conv,lem:graded}, we easily obtain the following convergence
estimates.
\begin{thm}
  \label{thm:singular}
  If $ \sigma = 1 $, then
  \begin{align}
    \nm{(u-U)'}_{L^2(0,T;L^2(\Omega))} &\leqslant
    C_1 \left(  h^{n+1} + J^{-(r-1/2)} \right), \\
    \max_{1 \leqslant j \leqslant J} \nm{(u-U)(t_j)}_{H_0^1(\Omega)} &\leqslant
    C_2 (h^n + \varepsilon_j J^{-(r-1/2-\gamma_0)}).
  \end{align}
  Furthermore, if $ \sigma > \sigma^* $, then
  \begin{align}
    \nm{(u-U)'}_{L^2(0,T;L^2(\Omega))} &\leqslant
    C_3 \left( h^{n+1} + J^{-m}  \right), \\
    \max_{1 \leqslant j \leqslant J} \nm{(u-U)(t_j)}_{H_0^1(\Omega)} &\leqslant
    C_4( h^n + \varepsilon_j J^{-(m-\gamma_0)}).
  \end{align}
  Above $ C_1 $, $ C_2 $, $ C_3 $ and $ C_4 $ are four positive constants that
  depend only on $ m $, $ n $, $ \gamma $, $ \sigma $, $ r $, $ \phi $, $ T $
  and the regularity of $ \mathcal K_h $.
\end{thm}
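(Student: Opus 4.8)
The plan is to read off \cref{thm:singular} from \cref{thm:conv} and \cref{lem:graded} by substituting $ u(x,t)=t^r\phi(x) $ and bounding each of the five indicators $ \eta_{j,1},\dots,\eta_{j,5} $. First I would check the regularity hypotheses of \cref{thm:conv} for this $ u $: since $ r>1>1/2+\gamma_0 $ and $ \phi\in H_0^1(\Omega)\cap H^{n+1}(\Omega)\subset H_0^1(\Omega)\cap H^2(\Omega) $, a direct computation gives $ u\in H^{1+\gamma_0}(0,T;H_0^1(\Omega)\cap H^2(\Omega)) $, while $ u''=r(r-1)t^{r-2}\phi\in L^1(0,T;H^2(\Omega)) $ because $ r-2>-1 $, and $ u'=rt^{r-1}\phi $ is continuous on $ (0,T] $ with values in $ L^2(\Omega) $; hence \cref{eq:conv_1,eq:conv_2} apply. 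The two ``spatial'' indicators are then immediate: the standard elliptic-projection estimate $ \nm{(I-R_h)v}_{L^2(\Omega)}+h\nm{(I-R_h)v}_{H_0^1(\Omega)}\lesssim h^{n+1}\nm{v}_{H^{n+1}(\Omega)} $, together with $ t_j\leqslant T $ and $ t^{r-1}\in H^{\gamma_0}(0,T) $ (valid since $ r-1>\gamma_0-1/2 $), yields $ \eta_{j,1}=t_j^r\nm{(I-R_h)\phi}_{H_0^1(\Omega)}\lesssim h^n $ and $ \eta_{j,2}=r\,\snm{t^{r-1}}_{H^{\gamma_0}(0,t_j)}\nm{(I-R_h)\phi}_{L^2(\Omega)}\lesssim h^{n+1} $.

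The core of the proof is $ \eta_{j,3},\eta_{j,4},\eta_{j,5} $. For $ u=t^r\phi $ the separation of variables makes every space--time norm inside these indicators factor: writing $ v(t):=t^r $, one has $ \big((I-P_\tau)R_hu'\big)'=R_h\phi\,(v'-P_\tau v')' $ and $ \big((I-P_\tau)\Delta u'\big)'=\Delta\phi\,(v'-P_\tau v')' $, so, using the $ H_0^1 $- and $ L^2 $-stability of $ R_h $ and $ \nm{\Delta\phi}_{L^2(\Omega)}\lesssim\nm{\phi}_{H^2(\Omega)}\lesssim\nm{\phi}_{H^{n+1}(\Omega)} $, each of the three temporal sums reduces, up to a $ \phi $-dependent constant, to
\[
  \Big(\sum_{i=1}^j \tau_i^{2-2\gamma_0}\inf_{0\leqslant\delta<1}
  \tfrac{t_i^{-\delta}}{1-\delta}\,\nm{(v'-P_\tau v')'}_{L_\delta^2(I_i)}^2\Big)^{1/2}.
\]
Since $ 2-2\gamma_0=3-\gamma $, this is exactly the left-hand side of \cref{eq:equiv_grid} when $ \sigma=1 $ and of \cref{eq:graded_grid} when $ \sigma>\sigma^* $, so \cref{lem:graded} bounds it by $ CJ^{-(r-1/2-\gamma_0)} $ in the first case and $ CJ^{-(m-\gamma_0)} $ in the second, using the identities $ (2r-\gamma)/2=r-1/2-\gamma_0 $ and $ (2m+1-\gamma)/2=m-\gamma_0 $.

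It remains to absorb the scalar prefactors. On either mesh $ \tau_j\lesssim J^{-1} $ (mean value theorem applied to $ s\mapsto s^\sigma $) and $ \ln(t_j/\tau_j)\lesssim\ln J $, whence $ \epsilon_j\lesssim\varepsilon_j $; moreover $ \min\{\gamma,m-\gamma_0\}\geqslant\gamma_0 $, so $ \tau_j^{\gamma_0}\lesssim J^{-\gamma_0} $ and $ \tau_j^{\min\{\gamma,m-\gamma_0\}}\lesssim J^{-\gamma_0} $, and $ \min\{\gamma,m-\gamma_0\} $ is no smaller than the $ J $-exponent appearing in $ \varepsilon_j $ (an elementary case check in $ \gamma $, using $ \gamma_0<1/2 $ and, for $ m=1 $, $ m-\gamma_0=1-\gamma_0 $). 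Combining with the preceding paragraph gives, for $ \sigma=1 $, $ \eta_{j,3}\lesssim\varepsilon_jJ^{-(r-1/2-\gamma_0)} $, $ \eta_{j,5}\lesssim\varepsilon_jJ^{-(r-1/2-\gamma_0)} $ and $ \eta_{j,4}+t_j^{\gamma_0}\eta_{j,5}\lesssim J^{-(r-1/2)} $, and for $ \sigma>\sigma^* $ the analogues with $ J^{-(m-\gamma_0)} $ and $ J^{-m} $. Inserting these into \cref{eq:conv_1,eq:conv_2} and using $ h^{n+1}\lesssim h^n $, $ t_j^{\gamma_0}\lesssim 1 $ yields the four asserted estimates. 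The only genuine obstacle is this last bookkeeping: one must confirm the elementary inequalities among $ \gamma_0 $, $ m-\gamma_0 $, $ \gamma $ and the exponents hidden in $ \varepsilon_j $ and $ \epsilon_j $ so that every power of $ J $ recombines into the clean rates $ J^{-(r-1/2)} $, $ J^{-m} $, $ J^{-(r-1/2-\gamma_0)} $ and $ J^{-(m-\gamma_0)} $; with \cref{lem:graded} in hand, nothing else is more than routine.
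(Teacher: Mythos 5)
Your proposal is correct and follows exactly the route the paper intends: the paper gives no written proof beyond the remark that the theorem follows from \cref{thm:conv} and \cref{lem:graded}, and your argument — verifying the hypotheses of \cref{thm:conv} for $u=t^r\phi$, factoring the space and time variables so that $\eta_{j,3},\eta_{j,4},\eta_{j,5}$ reduce to the sum estimated in \cref{lem:graded}, and then checking the exponent bookkeeping against $\varepsilon_j$ — is precisely the omitted elaboration. No gaps.
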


\subsection{Proofs of \cref{thm:stabi,thm:conv}}
\label{ssec:proofs}
{\bf Proof of \cref{thm:stabi}.}
  Inserting $ V = U' \chi_{(0,t_j)} $ into \cref{eq:algo} yields
  \[
    \begin{aligned}
      {}&
      \dual{
        \D_{0+}^\gamma \big( U-U(0)-t\Pi_hu_1 \big), U'
      }_{H^{\gamma_0}(0,t_j;L^2(\Omega))} +
      \dual{\nabla U,\nabla U'}_{\Omega\times(0,t_j)}  \\
      ={}&
      \dual{f,U'}_{H^{\gamma_0}(0,t_j;L^2(\Omega))} +
      \dual{
        \D_{0+}^\gamma \Pi_hu_1, U'
      }_{H^{\gamma_0}(0,t_j;L^2(\Omega))}.
    \end{aligned}
  \]
  Since
  \[
    \D_{0+}^\gamma (U-U(0)-t\Pi_hu_1) =
    \D_{0+}^{2\gamma_0}(U'-\Pi_hu_1),
  \]
  it follows that
  \[
    \begin{aligned}
      {}&
      \dual{
        \D_{0+}^{2\gamma_0} \big( U'\Pi_hu_1 \big), U'
      }_{H^{\gamma_0}(0,t_j;L^2(\Omega))} +
      \dual{\nabla U,\nabla U'}_{\Omega\times(0,t_j)}  \\
      ={}&
      \dual{f,U'}_{H^{\gamma_0}(0,t_j;L^2(\Omega))} +
      \dual{
        \D_{0+}^\gamma \Pi_hu_1, U'
      }_{H^{\gamma_0}(0,t_j;L^2(\Omega))}.
    \end{aligned}
  \]
  In addition, using integration by parts gives
  \[
      2 \dual{ \nabla U, \nabla U' }_{\Omega\times(0,t_j)} =
      \nm{U(t_j)}_{H_0^1(\Omega)}^2 -
      \nm{U(0)}_{H_0^1(\Omega)}^2,
  \]
  and \cref{lem:coer} implies
  \[
    \begin{aligned}
      \dual{\D_{0+}^{2\gamma_0} U', U'}_{H^{\gamma_0}(0,t_j;L^2(\Omega))} &
      \sim  \snm{U'}_{H^{\gamma_0}(0,t_j;L^2(\Omega))}^2 ,\\
      \dual{\D_{0+}^{2\gamma_0} \Pi_hu_1, U'}_{H^{\gamma_0}(0,t_j;L^2(\Omega))} &
      \lesssim \snm{\Pi_hu_1}_{H^{\gamma_0}(0,t_j;L^2(\Omega))}
      \snm{U'}_{H^{\gamma_0}(0,t_j;L^2(\Omega))}.
    \end{aligned}
  \]
  Consequently,
  \[
    \begin{aligned}
      {}& \snm{U'}_{H^{\gamma_0}(0,t_j;L^2(\Omega))}^2 +
      \nm{U(t_j)}_{H_0^1(\Omega)}^2 \\
      \lesssim{}& \nm{U(0)}_{H_0^1(\Omega)}^2 +
      \dual{f,U'}_{H^{\gamma_0}(0,t_j;L^2(\Omega))} +
      \snm{\Pi_hu_1}_{H^{\gamma_0}(0,t_j;L^2(\Omega))}
      \snm{U'}_{H^{\gamma_0}(0,t_j;L^2(\Omega))} \\
      \lesssim{}& \nm{U(0)}_{H_0^1(\Omega)}^2 +
      \left(
        \nm{f}_{H^{-\gamma_0}(0,T_j;L^2(\Omega))} +
        \snm{\Pi_hu_1}_{H^{\gamma_0}(0,t_j;L^2(\Omega))}
      \right)
      \snm{U'}_{H^{\gamma_0}(0,t_j;L^2(\Omega))}.
    \end{aligned}
  \]
  By the Young's inequality with $ \epsilon $, it follows that
  \[
    \begin{aligned}
      & \snm{U'}_{H^{\gamma_0}(0,t_j;L^2(\Omega))} +  \nm{U(t_j)}_{H_0^1(\Omega)} \\
      \lesssim & \nm{U(0)}_{H_0^1(\Omega)} +
      \nm{f}_{H^{-\gamma_0}(0,t_j;L^2(\Omega))} +
      \snm{\Pi_hu_1}_{H^{\gamma_0}(0,t_j;L^2(\Omega))}.
    \end{aligned}
  \]
  Therefore, \cref{eq:stabi} follows from the following evident estimates:
  \begin{align*}
    \nm{U(0)}_{H_0^1(\Omega)} =
    \nm{R_h u_0}_{H_0^1(\Omega)} \leqslant \nm{ u_0}_{H_0^1(\Omega)}, \\
    \snm{\Pi_hu_1}_{H^{\gamma_0}(0,t_j;L^2(\Omega))}
    \lesssim t_j^{1/2-\gamma_0}\nm{u_1}_{L^2(\Omega)}.
  \end{align*}
  This completes the proof.
\hfill\ensuremath{\blacksquare}\\

To prove \cref{thm:conv}, let us first prove the following two lemmas.
\begin{lem}
  \label{lem:conv}
  If $ u \in H^{1+\gamma_0}(0,T; H_0^1(\Omega)\cap H^2(\Omega)) $ and $ u'' \in
  L^1(0,T;H^2(\Omega)) $, then
  \begin{equation*}
    \label{eq:conv}
    \snm{(U-Q_\tau R_h u)'}_{ H^{\gamma_0}\left( 0,t_j; L^2(\Omega) \right) } +
    \nm{(U-Q_\tau R_h u)(t_j)}_{H_0^1(\Omega)}
    \lesssim \eta_{j,2} + \eta_{j,5}
  \end{equation*}
  for each $ 1 \leqslant j \leqslant J $, where $ \eta_{j,2} $ and $ \eta_{j,5}
  $ are defined as that in \cref{thm:conv}.
\end{lem}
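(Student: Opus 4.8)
The plan is to run a discrete energy argument on $ W := U - Q_\tau R_h u $. First I would record the structural facts I intend to use. Since $ R_h $ is a bounded time-independent linear operator, it commutes with $ Q_\tau $ — for any bounded linear operator $ A $, both $ A Q_\tau v $ and $ Q_\tau(Av) $ satisfy the three defining conditions of $ Q_\tau(Av) $, hence coincide by uniqueness — and so does $ \Delta $; thus $ Q_\tau R_h u = R_h Q_\tau u $ and $ Q_\tau\Delta u = \Delta Q_\tau u $. The regularity hypotheses give $ u'\in C([0,T];H_0^1(\Omega)\cap H^2(\Omega)) $, so $ R_h u' $ and $ \Delta u' $ are continuous on $ (0,T] $ and \cref{lem:frac_appro,lem:I-Q} apply to $ u $, $ R_h u $ and $ \Delta u $; and $ W\in M_{h,\tau} $ with $ W(0) = R_h u_0 - R_h u(0) = 0 $ since $ u(0)=u_0 $. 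Testing both \cref{eq:weak_sol} and \cref{eq:algo} with $ V = W'\chi_{(0,t_j)} $ — admissible in the respective test spaces because $ \gamma_0 < 1/2 $ and $ W'|_{I_i}\in P_{m-1}(I_i;S_h) $ — subtracting, and using $ W(0)=0 $ with the causality of $ \D_{0+}^\gamma $ (and the remark that $ \snm{\cdot}_{H^{\gamma_0}(0,t_j;L^2(\Omega))} $ agrees with $ \snm{\cdot}_{H^{\gamma_0}(0,T;L^2(\Omega))} $ on functions supported in $ (0,t_j) $), the data $ f $ cancels and one is left with
\[
  \dual{
    \D_{0+}^{2\gamma_0}\big( (u-U)' - (u_1-\Pi_h u_1) \big), W'
  }_{H^{\gamma_0}(0,t_j;L^2(\Omega))}
  + \dual{\nabla(u-U),\nabla W'}_{\Omega\times(0,t_j)} = 0 .
\]

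Next I would substitute $ u-U = (I-R_h)u + (I-Q_\tau)R_h u - W $ (using $ Q_\tau R_h u = R_h Q_\tau u $) into both terms, move the three $ W $-contributions to the left, and bound that side from below by $ \snm{W'}_{H^{\gamma_0}(0,t_j;L^2(\Omega))}^2 + \nm{W(t_j)}_{H_0^1(\Omega)}^2 $ via the coercivity \cref{lem:coer} and integration by parts in time (with $ W(0)=0 $, so $ \dual{\nabla W,\nabla W'}_{\Omega\times(0,t_j)} = \frac12\nm{W(t_j)}_{H_0^1(\Omega)}^2 $). Of the remaining terms three vanish: $ \dual{\D_{0+}^{2\gamma_0}((I-Q_\tau)R_h u)',W'}_{H^{\gamma_0}(0,t_j;L^2(\Omega))} = 0 $ directly from the definition of $ Q_\tau $ ($ W'|_{(0,t_j)} $ being an admissible $ W_\tau $-type test function); $ \dual{\D_{0+}^{2\gamma_0}(u_1-\Pi_h u_1),W'}_{H^{\gamma_0}(0,t_j;L^2(\Omega))} = 0 $ because $ W'(t)\in\Sigma_h $ and $ \Pi_h $ is the $ L^2(\Omega) $-orthogonal projection onto $ \Sigma_h $, so $ t\mapsto\dual{u_1-\Pi_h u_1,W'(t)}_{L^2(\Omega)} $ is identically zero; and $ \dual{\nabla(I-R_h)u,\nabla W'}_{\Omega\times(0,t_j)} = 0 $ because $ W'(t)\in S_h $ is $ H_0^1(\Omega) $-orthogonal to $ (I-R_h)u(t) $. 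The two survivors are $ \dual{\D_{0+}^{2\gamma_0}(I-R_h)u',W'}_{H^{\gamma_0}(0,t_j;L^2(\Omega))} $, which \cref{lem:43} bounds by $ \eta_{j,2}\,\snm{W'}_{H^{\gamma_0}(0,t_j;L^2(\Omega))} $, and the elliptic term $ \dual{\nabla(I-Q_\tau)R_h u,\nabla W'}_{\Omega\times(0,t_j)} $.

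For the elliptic survivor I would write $ (I-Q_\tau)R_h u = R_h(u-Q_\tau u) $ and use that $ R_h $ reproduces the Dirichlet form against $ S_h\ni W'(t) $, so this term equals $ \dual{\nabla(u-Q_\tau u),\nabla W'}_{\Omega\times(0,t_j)} $; integration by parts in space (legitimate since $ (u-Q_\tau u)(t)\in H_0^1(\Omega)\cap H^2(\Omega) $ and $ W'(t)\in H_0^1(\Omega) $) turns this into $ -\dual{\Delta u - Q_\tau\Delta u,W'}_{\Omega\times(0,t_j)} $. Since $ \Delta u - Q_\tau\Delta u\in L^2(0,t_j;L^2(\Omega))\hookrightarrow H^{-\gamma_0}(0,t_j;L^2(\Omega)) $ and $ W'\in H^{\gamma_0}(0,t_j;L^2(\Omega)) $, this is at most $ \nm{\Delta u - Q_\tau\Delta u}_{H^{-\gamma_0}(0,t_j;L^2(\Omega))}\,\snm{W'}_{H^{\gamma_0}(0,t_j;L^2(\Omega))} $, and \cref{eq:I-Q-g-dual} with $ v=\Delta u $ followed by \cref{lem:frac_appro} with $ v=\Delta u' $ bounds $ \nm{\Delta u - Q_\tau\Delta u}_{H^{-\gamma_0}(0,t_j;L^2(\Omega))} $ by $ \eta_{j,5} $. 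Collecting the bounds yields $ \snm{W'}_{H^{\gamma_0}(0,t_j;L^2(\Omega))}^2 + \nm{W(t_j)}_{H_0^1(\Omega)}^2 \lesssim (\eta_{j,2}+\eta_{j,5})\,\snm{W'}_{H^{\gamma_0}(0,t_j;L^2(\Omega))} $, whence the lemma by absorption. I expect the main obstacle to be exactly this elliptic survivor: seeing that the Ritz projection may be peeled off to expose $ \dual{\nabla(u-Q_\tau u),\nabla W'} $, that integration by parts produces $ -\dual{\Delta(u-Q_\tau u),W'} $ with $ \Delta $ commuting through $ Q_\tau $, and that the natural functional setting for it is the $ H^{-\gamma_0} $--$ H^{\gamma_0} $ duality, where \cref{eq:I-Q-g-dual} supplies precisely the factor $ \tau_j^{\min\{\gamma,m-\gamma_0\}} $ occurring in $ \eta_{j,5} $; a secondary point will be checking that the $ Q_\tau $-orthogonality and the commutations $ Q_\tau R_h = R_h Q_\tau $, $ Q_\tau\Delta = \Delta Q_\tau $ all persist on the subinterval $ (0,t_j) $.
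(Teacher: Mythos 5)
Your proposal is correct and follows essentially the same route as the paper's proof: the same energy argument on $\theta = U - Q_\tau R_h u$ tested with $\theta'\chi_{(0,t_j)}$, elimination of the $(I-\Pi_h)u_1$ term by $L^2$-orthogonality, the $Q_\tau$-orthogonality to reduce the fractional term to $(I-R_h)u'$, and the Ritz-projection/integration-by-parts manipulation turning the elliptic term into $-\dual{(I-Q_\tau)\Delta u,\theta'}$, estimated in the $H^{-\gamma_0}$--$H^{\gamma_0}$ duality via \cref{eq:I-Q-g-dual} and \cref{lem:frac_appro}. The only differences are presentational (you split $u-U$ into three pieces up front and make the commutations $Q_\tau R_h=R_hQ_\tau$, $Q_\tau\Delta=\Delta Q_\tau$ and the subinterval-restriction issue explicit, which the paper leaves implicit).
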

\begin{proof}
  Since
  \[
    \D_{0+}^\gamma\big( u-U-(u-U)(0)-t(I-\Pi_h)u_1 \big) =
    \D_{0+}^{2\gamma_0} \big( (u-U)'-(I-\Pi_h)u_1 \big),
  \]
  combining \cref{eq:weak_sol,eq:algo} yields
  \[
    \dual{
      \D_{0+}^{2\gamma_0} \big( (u-U)' - (I-\Pi_h)u_1 \big),
      \theta'
    }_{H^{\gamma_0}(0,t_j;L^2(\Omega))} +
    \dual{ \nabla (u-U), \nabla\theta' }_{\Omega\times(0,t_j)} = 0,
  \]
  where $ \theta := U-Q_\tau R_h u $. Then, as the definition of $ \Pi_h $
  implies
  \[
    \dual{
      \D_{0+}^{2\gamma_0} (I-\Pi_h)u_1, \theta'
    }_{H^{\gamma_0}(0,t_j;L^2(\Omega))} = 0,
  \]
  we obtain
  \[
    \dual{
      \D_{0+}^{2\gamma_0} \big( u-U \big)', \theta'
    }_{H^{\gamma_0}(0,t_j;L^2(\Omega))} +
    \dual{ \nabla (u-U), \nabla\theta' }_{\Omega\times(0,t_j)} = 0.
  \]
  Therefore, a simple calculation gives
  \[
    \dual{
      \D_{0+}^{2\gamma_0} \theta', \theta'
    }_{H^{\gamma_0}(0,t_j;L^2(\Omega))} +
    \dual{ \nabla \theta, \nabla \theta'}_{ \Omega\times(0,t_j)} =
    E_1 + E_2,
  \]
  where
  \begin{align*}
    E_1 &:= \dual{
      \D_{0+}^{2\gamma_0}(u-Q_\tau R_hu)', \theta'
    }_{H^{\gamma_0}(0,t_j;L^2(\Omega))}, \\
    E_2 &:= \dual{ \nabla(u-Q_\tau R_hu),
    \nabla\theta'}_{\Omega\times(0,t_j)}.
  \end{align*}
  As the fact $ \theta(0)=0 $ implies
  \begin{align*}
    2\dual{ \nabla \theta, \nabla \theta'}_{ \Omega\times(0,t_j)} =
    \nm{\theta(t_j)}_{H_0^1(\Omega)}^2,
  \end{align*}
  by \cref{lem:coer} we obtain
  \begin{equation}
    \label{eq:E1+E2}
    \snm{\theta'}_{H^{\gamma_0}(0,t_j;L^2(\Omega))}^2 +
    \nm{\theta(t_j)}_{H_0^1(\Omega)}^2 \lesssim
    E_1 + E_2.
  \end{equation}

  Next, let us estimate $ E_1 $ and $ E_2 $. As the definition of $ Q_\tau $
  indicates
  \[
    E_1 = \dual{
      \D_{0+}^{2\gamma_0}(u-R_hu)', \theta'
    }_{H^{\gamma_0}(0,t_j;L^2(\Omega))},
  \]
  using \cref{lem:coer} yields
  \begin{equation}\label{eq:E1}
    E_1 \lesssim
    \snm{(I-R_h)u'}_{H^{\gamma_0}(0,t_j;L^2(\Omega))}
    \snm{\theta'}_{H^{\gamma_0}(0,t_j;L^2(\Omega))}.
  \end{equation}
  By the definitions of $ R_h $ and $ Q_\tau $, a straightforward computing
  gives
  \begin{align*}
    E_2 &=
    \dual{
      \nabla u-\nabla(Q_\tau u), \nabla\theta'
      }_{\Omega\times(0,t_j)} = -\dual{
      \Delta u-\Delta(Q_\tau u), \theta'
    }_{\Omega\times(0,t_j)} \\
    &=
    -\dual{
      (I-Q_\tau)\Delta u, \theta'
    }_{\Omega\times(0,t_j)},
  \end{align*}
  so that \cref{lem:I-Q} implies
  \begin{equation} \label{eq:E2}
    \begin{aligned}
      E_2 & \leqslant
      \nm{(I-Q_\tau)\Delta u}_{H^{-\gamma_0}(0,t_j;L^2(\Omega))}
      \snm{\theta'}_{H^{\gamma_0}(0,t_j;L^2(\Omega))} \\
      & \lesssim
      \tau_j^{\min\{\gamma,m-\gamma_0\}}
      \snm{ (I-P_\tau)\Delta u'}_{ H^{\gamma_0}(0,t_j;L^2(\Omega)) }
      \snm{\theta'}_{ H^{\gamma_0}( 0,t_j; L^2(\Omega) ) }.
    \end{aligned}
  \end{equation}

  Finally, by the Young's inequality with $ \epsilon $, combining
  \cref{eq:E1+E2,eq:E1,eq:E2} yields
  \begin{align*}
    {} &
    \snm{\theta'}_{ H^{\gamma_0}( 0,t_j; L^2(\Omega) ) } +
    \nm{\theta(t_j)}_{H_0^1(\Omega) } \\
    \lesssim{} &
    \snm{(I-R_h)u'}_{ H^{\gamma_0}( 0,t_j; L^2(\Omega) ) } +
    \tau_j^{\min\{\gamma,m-\gamma_0\}}
    \snm{ (I-P_\tau)\Delta u'}_{ H^{\gamma_0}(0,t_j;L^2(\Omega)) }.
  \end{align*}
  Therefore, using \cref{lem:frac_appro} proves \cref{eq:conv}.
\end{proof}

\begin{lem}
  \label{lem:73}
  If $ v \in H^{\gamma_0}(0,t) $ with $ 0 < t < \infty $, then
  \begin{equation}
    \label{eq:73}
    \nm{v}_{L^2(0,t)} \leqslant C_\gamma t^{\gamma_0} \snm{v}_{H^{\gamma_0}(0,t)}.
  \end{equation}
\end{lem}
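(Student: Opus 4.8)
The plan is to reduce to the unit interval by a dilation argument. Since $\gamma_0 = (\gamma-1)/2$ satisfies $0 < \gamma_0 < 1/2$, \cref{lem:equiv} applies with $\alpha = \gamma_0$ and gives $\nm{w}_{L^2(0,1)} \leqslant \nm{w}_{H^{\gamma_0}(0,1)} \leqslant C_1^{-1}\snm{w}_{H^{\gamma_0}(0,1)}$ for every $w \in H^{\gamma_0}(0,1)$, so \cref{eq:73} holds in the special case $t = 1$ with $C_\gamma = C_1^{-1}$. It remains to transfer this estimate to an arbitrary $t \in (0,\infty)$ by rescaling.

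For this, I would extend $v$ by zero outside $(0,t)$, writing $\tilde v := v\chi_{(0,t)}$, so that $\snm{v}_{H^{\gamma_0}(0,t)} = \snm{\tilde v}_{H^{\gamma_0}(\mathbb R)}$ and $\nm{v}_{L^2(0,t)} = \nm{\tilde v}_{L^2(\mathbb R)}$ directly from the definitions, and then set $\hat v(s) := \tilde v(ts)$ for $s \in \mathbb R$, which is supported in $[0,1]$. A change of variables in the integrals gives $\nm{\hat v}_{L^2(0,1)} = t^{-1/2}\nm{v}_{L^2(0,t)}$, while the identity $\mathcal F(\hat v)(\xi) = t^{-1}\mathcal F(\tilde v)(\xi/t)$, inserted into $\int_{\mathbb R}\snm{\xi}^{2\gamma_0}\snm{\mathcal F(\hat v)(\xi)}^2\,\mathrm d\xi$ followed by the substitution $\eta = \xi/t$, yields $\snm{\hat v}_{H^{\gamma_0}(0,1)} = t^{\gamma_0-1/2}\snm{v}_{H^{\gamma_0}(0,t)}$.

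Applying the unit-interval estimate to $\hat v$ then gives $t^{-1/2}\nm{v}_{L^2(0,t)} \leqslant C_\gamma\, t^{\gamma_0-1/2}\snm{v}_{H^{\gamma_0}(0,t)}$, and multiplying through by $t^{1/2}$ produces \cref{eq:73}; the constant depends only on $\gamma$ (through $\gamma_0$ and $C_1$) and, crucially, not on $t$. The only delicate point is the exponent bookkeeping for the scaling of the Fourier-based seminorm $\snm{\cdot}_{H^{\gamma_0}}$ under the dilation $s \mapsto ts$; everything else is routine. Essentially the same dilation device also underlies the embedding $L^2_{2\gamma_0}(0,t_j;L^2(\Omega)) \subset H^{-\gamma_0}(0,t_j;L^2(\Omega))$ noted in the remark following \cref{thm:stabi}.
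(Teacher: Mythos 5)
Your proof is correct, but it follows a genuinely different route from the paper's. You establish the case $t=1$ via the norm equivalence in \cref{lem:equiv} and then transfer it to general $t$ by a dilation, tracking how the Fourier-based seminorm scales ($\snm{\hat v}_{H^{\gamma_0}(0,1)} = t^{\gamma_0-1/2}\snm{v}_{H^{\gamma_0}(0,t)}$ is the right bookkeeping). The paper instead extends $v$ by zero, invokes the Gagliardo double-integral characterization of $\snm{\cdot}_{H^{\gamma_0}(\mathbb R)}$ from \cref{lem:equiv}, and observes that integrating $\snm{v(s)-v(\tau)}^2\snm{s-\tau}^{-1-2\gamma_0}$ over $\tau<0$ (where $v$ vanishes) produces the \emph{weighted} Hardy-type bound $\int_0^t s^{-2\gamma_0}v^2(s)\,\mathrm ds \leqslant C_\gamma \snm{v}_{H^{\gamma_0}(0,t)}^2$, from which \cref{eq:73} follows by the crude bound $1\leqslant t^{2\gamma_0}s^{-2\gamma_0}$ on $(0,t)$. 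Your argument is more routine and arguably cleaner for the stated inequality, but the paper's intermediate weighted estimate is strictly stronger, and it is precisely that weighted form (via duality) that yields the embedding $L^2_{2\gamma_0}(0,t_j;L^2(\Omega))\subset H^{-\gamma_0}(0,t_j;L^2(\Omega))$ claimed in the remark after \cref{thm:stabi}; your closing assertion that the dilation device alone underlies that embedding is not quite right, since the unweighted estimate only controls $\dual{f,v}$ for $f\in L^2$, not for $f$ merely in $L^2_{2\gamma_0}$ with a weight degenerating at $s=0$.
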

\begin{proof}
  Extending $ v $ to $ \mathbb R \setminus (0,t) $ by zero, by \cref{lem:equiv}
  we have
  \[
    \int_\mathbb R \int_\mathbb R
    \frac{\snm{v(s)-v(\tau)}^2}{\snm{s-\tau}^{1+2\gamma_0}}
    \, \mathrm{d}s \, \mathrm{d}\tau
    \leqslant C_\gamma \snm{v}_{H^{\gamma_0}(\mathbb R)}^2
    = C_\gamma \snm{v}_{H^{\gamma_0}(0,t)}^2.
  \]
  Since a simple computing yields
  \[
    \int_0^t s^{-2\gamma_0} v^2(s) \, \mathrm{d}s
    \leqslant 2\gamma_0
    \int_\mathbb R \int_\mathbb R
    \frac{\snm{v(s)-v(\tau)}^2}{\snm{s-\tau}^{1+2\gamma_0}}
    \, \mathrm{d}s \, \mathrm{d}\tau,
  \]
  we obtain
  \[
    \int_0^t v^2(s) \, \mathrm{d}s <
    t^{2\gamma_0} \int_0^t s^{-2\gamma_0} v^2(s) \, \mathrm{d}s
    \leqslant C_\gamma t^{2\gamma_0} \snm{v}_{H^{\gamma_0}(0,t)}^2.
  \]
  This proves \cref{eq:73} and thus completes the proof.
\end{proof}

\medskip\noindent
{\bf Proof of \cref{thm:conv}.}
As the proof of \cref{eq:conv_1} is trivial by \cref{lem:conv,lem:I-Q}, we only
prove \cref{eq:conv_2}. To do so, we set
  \begin{align*}
    E_1 &:= \nm{(I-R_h)u'}_{L^2(0,t_j;L^2(\Omega))},\\
    E_2 &:= \nm{(U-Q_\tau R_hu)'}_{L^2(0,t_j;L^2(\Omega))}, \\
    E_3 &:= \nm{\big((I-Q_\tau)R_hu\big)'}_{L^2(0,t_j;L^2(\Omega))}.
  \end{align*}
  Since \cref{lem:73} implies
  \begin{align*}
    E_1 & \lesssim t_j^{\gamma_0}
    \snm{(I-R_h)u'}_{H^{\gamma_0}(0,t_j;L^2(\Omega))}
    = t_j^{\gamma_0} \, \eta_{j,2}, \\
    E_2 & \lesssim t_j^{\gamma_0}
    \snm{(U-Q_\tau R_h u)'}_{H^{\gamma_0}(0,t_j;L^2(\Omega))},
  \end{align*}
  by \cref{lem:conv} we obtain
  \[
    E_1 + E_2
   \lesssim \tau_j^{\gamma_0} (\eta_{j,2} + \eta_{j,5}).
  \]
  Also, by \cref{lem:I-Q,lem:ptau},
  \[
    E_3 \lesssim \tau_j^{\gamma_0}
    \snm{\big((I-P_\tau)R_hu'\big)'}_{H^{\gamma_0}(0,t_j;L^2(\Omega))}
    \lesssim \eta_{j,4}.
  \]
   As a consequence,
  \begin{align*}
    E_1 + E_2 + E_3 \lesssim
    \eta_{j,4} + \tau_j^{\gamma_0} \, (\eta_{j,2} + \eta_{j,5}).
  \end{align*}
  Therefore, \cref{eq:conv_2} follows from the estimate
  \[
    \nm{(u-U)'}_{L^2(0,t_j;L^2(\Omega))} \leqslant
    E_1 + E_2 + E_3.
  \]
  This completes the proof.
\hfill\ensuremath{\blacksquare}\\

\section{Numerical Results}
\label{sec:numer}
This section performs some numerical experiments in two-dimensional space to
verify the theoretical results. We set $ \Omega := (0,1)^2 $, $ T := 1 $, and
\[
  u(x,t):=t^rxy(1-x)(1-y), \quad (x,t) \in \Omega \times (0,T),
\]
where $ r > 1 $. In addition, we introduce the following notations:
\[
  \mathcal E_1(U): = \nm{(u-U)'}_{L^2(0,T;L^2(\Omega))}, \quad
  \mathcal E_2(U): = \max_{1\leqslant j\leqslant J}
  \nm{(u-U)(t_j)}_{H_0^1(\Omega)}.
\]

\medskip\noindent{\bf Experiment 1.} This experiment verifies the spatial
accuracy of $ U $ in the case of $ \gamma = 1.5 $. To ensure that the spatial
discretization is dominating, we set $ r = 2 $, $ m = 2 $ and $ J = 64 $. The
numerical results in \cref{tab:ex1} illustrate $ \mathcal E_1(U) = \mathcal
O(h^{n+1}) $ and $ \mathcal E_2(U) = \mathcal O(h^n) $, which agrees well with
\cref{thm:conv_regu}.

\begin{table}[H]
  \caption{$ \gamma = 1.5$, $r = 2$, $m = 2$, $\sigma = 1$, $J = 64$.}
  \label{tab:ex1}
  \small\setlength{\tabcolsep}{3pt}
  \begin{tabular}{ccccccccccc}
    \toprule
    \multirow{2}{*}{$1/h$} & & \multicolumn{4}{c}{$n=1$} &&  \multicolumn{4}{c}{$n=2$} \\
    \cline{3-6} \cline{8-11}
    & & $ \mathcal E_1(U)$ & Order              & $ \mathcal E_2(U)$ & Order
    &                    & $ \mathcal E_1(U)$ & Order              & $ \mathcal E_2(U)$ & Order\\
    \midrule
    8    && 1.59e-3 & --   & 3.02e-2 & --     && 3.68e-5 & --   & 2.11e-3 & --   \\
    16   && 4.03e-4 & 1.98 & 1.52e-2 & 0.99   && 4.59e-6 & 3.00 & 5.31e-4 & 1.99 \\
    32   && 1.01e-4 & 2.00 & 7.60e-3 & 1.00   && 5.73e-7 & 3.00 & 1.33e-4 & 2.00 \\
    64   && 2.53e-5 & 2.00 & 3.80e-3 & 1.00   && 7.17e-8 & 3.00 & 3.32e-5 & 2.00 \\
    \bottomrule
  \end{tabular}
\end{table}

\medskip\noindent{\bf Experiment 2.} This experiment verifies the temporal
accuracy indicated by \cref{thm:conv_regu}. We set $ n=4 $ and $ h=1/16 $ to
ensure that the spatial error is negligible. The numerical results displayed in
\cref{tab:ex2} are summarized as follows.
\begin{itemize}
  \item The accuracy $ \mathcal E_1(U) = \mathcal O(J^{-m}) $ is well verified.
  \item For $ m=1 $, the accuracy $ \mathcal E_2(U) = \mathcal O(J^{-1.2}) $ in
    the case of $ \gamma=1.8 $ is verified, the numerical results about $
    \mathcal E_2(U) $ in the case of $ \gamma=1.5 $ also agree with the
    theoretical accuracy $ \mathcal O((\ln{}J)^{1/2}J^{-3/2}) $, but in the case
    of $ \gamma=1.2 $ the numerical results illustrate $ \mathcal E_2(U) =
    \mathcal O(J^{-2}) $, which exceeds the theoretical accuracy $ \mathcal
    O(J^{-1.5}) $.
  \item For $ m=2 $, the numerical results indicate that $ \mathcal E_2(U) =
    \mathcal O(J^{-3}) $, which exceeds the theoretical accuracy $ \mathcal
    O(J^{-2.5}) $.
\end{itemize}

\begin{table}[H]
  \caption{$r=3$, $n=4$, $h=1/16$, $\sigma=1$.}
  \label{tab:ex2}
  \small\setlength{\tabcolsep}{3pt}
  \begin{tabular}{cccccccccccc}
    \toprule
    \multirow{2}{*}{$\gamma$} &    \multicolumn{5}{c}{$m=1$}  & & \multicolumn{5}{c}{$m=2$}    \\
    \cline{2-6} \cline{8-12}
    & $J$ & $\mathcal E_1(U)$ & Order & $\mathcal E_2(U)$ & Order    &
    & $J$ & $\mathcal E_1(U)$ & Order & $\mathcal E_2(U)$ & Order \\
    \midrule
    \multirow{4}{*}{1.2}
    & 16  & 2.08e-3 & --   & 2.51e-4 & --   &  & 16  & 2.97e-5 & --   & 6.50e-7 & --  \\
    & 32  & 1.04e-3 & 1.00 & 6.12e-5 & 2.03 &  & 32  & 7.48e-6 & 1.99 & 8.02e-8 & 3.02  \\
    & 64  & 5.21e-4 & 1.00 & 1.49e-5 & 2.04 &  & 64  & 1.88e-6 & 1.99 & 1.00e-8 & 3.00  \\
    & 128 & 2.60e-4 & 1.00 & 3.60e-6 & 2.05 &  & 128 & 4.70e-7 & 2.00 & 1.25e-9 & 3.01  \\
    \midrule
    \multirow{4}{*}{1.5}
    & 128  & 2.60e-4 & --   & 7.72e-6 & --   &  & 16  & 3.42e-5 & --   & 1.71e-6 & -- \\
    & 256  & 1.30e-4 & 1.00 & 2.91e-6 & 1.41 &  & 32  & 8.59e-6 & 1.99 & 2.07e-7 & 3.05 \\
    & 512  & 6.51e-5 & 1.00 & 1.07e-6 & 1.44 &  & 64  & 2.15e-6 & 2.00 & 2.52e-8 & 3.04 \\
    & 1024 & 3.26e-5 & 1.00 & 3.90e-7 & 1.46 &  & 128 & 5.38e-7 & 2.00 & 3.08e-9 & 3.03 \\
    \midrule
    \multirow{4}{*}{1.8}
    & 128  & 2.62e-4 & --   & 8.13e-5 & --   &  & 16  & 4.13e-5 & --   & 3.36e-6 & --  \\
    & 256  & 1.31e-4 & 1.00 & 3.60e-5 & 1.18 &  & 32  & 1.03e-5 & 2.00 & 4.05e-7 & 3.05  \\
    & 512  & 6.54e-5 & 1.00 & 1.58e-5 & 1.19 &  & 64  & 2.59e-6 & 2.00 & 4.89e-8 & 3.05  \\
    & 1024 & 3.27e-5 & 1.00 & 6.91e-6 & 1.19 &  & 128 & 6.47e-7 & 2.00 & 5.93e-9 & 3.05  \\
    \bottomrule
  \end{tabular}
\end{table}

\noindent{\bf Experiment 3.} This experiment verifies the temporal accuracy
implied by \cref{thm:singular}. Here we set $ n = 4$, $ h = 1/16 $ so that the
spatial error is sufficiently small. The numerical results are presented in
\cref{tab:ex3-1,tab:ex3-2}. Obviously, the numerical results verifies well that
$ \mathcal E_1(U) = \mathcal O(J^{1/2-r}) $ for $ \sigma=1 $ and that $ \mathcal
E_1(U) = \mathcal O(J^{-m}) $ for $ \sigma>\sigma^* $. For $ m=2 $, the accuracy
$ \mathcal E_2(U)=\mathcal O(J^{-r}) $ in the case of $ \sigma=1 $ is verified,
but $ \mathcal E_2(U) = \mathcal O(J^{-3}) $ in the case of $ \sigma>\sigma^*$
is also observed, which exceeds the theoretical accuracy $ \mathcal O(J^{-2.5})
$. As for $ \mathcal E_2(U) $ in the case of $ m=1 $, we summarize as follows:
in \cref{tab:ex3-1} the accuracies $ \mathcal E_2(U) = \mathcal O(J^{-1.1}) $
for $ \sigma=1 $ and $ \mathcal E_2(U) = \mathcal O(J^{-1.5}) $ for $ \sigma >
  \sigma^* $ are verified; in \cref{tab:ex3-2}, the numerical results agree with
  the theoretical accuracies $ \mathcal E_2(U) = \mathcal O(J^{-1.05}) $ for $
  \sigma=1 $ and $ \mathcal E_2(U) = \mathcal O(J^{-1.4}) $ for $ \sigma >
  \sigma^* $.

\begin{table}[H]
  \caption{$ \gamma = 1.4 $, $n=4$, $h=1/16$.}
  \label{tab:ex3-1}
  \small\setlength{\tabcolsep}{3pt}
  \begin{tabular}{ccccccccccccc}
    \toprule
    \multirow{2}{*}{$\sigma$} &&  \multicolumn{5}{c}{$m=1$, $r=1.1$}  && \multicolumn{5}{c}{$m=2$, $r=1.6$}    \\
    \cline{3-7} \cline{9-13}
    && $J$ & $\mathcal E_1(U)$ & Order & $\mathcal E_2(U)$ & Order    &
    & $J$ & $\mathcal E_1(U)$ & Order & $\mathcal E_2(U)$ & Order \\
    \midrule
    \multirow{4}{*}{1}


    &  & 32  & 4.23e-4 & --   & 7.90e-5 & --   &  & 32  & 4.92e-5 & --   & 3.33e-6 & -- \\
    &  & 64  & 2.80e-4 & 0.59 & 4.15e-5 & 0.93 &  & 64  & 2.30e-5 & 1.10 & 1.02e-6 & 1.70 \\
    &  & 128 & 1.85e-4 & 0.60 & 2.03e-5 & 1.03 &  & 128 & 1.07e-5 & 1.10 & 3.27e-7 & 1.65 \\
    &  & 256 & 1.22e-4 & 0.60 & 9.62e-6 & 1.07 &  & 256 & 5.00e-6 & 1.10 & 1.07e-7 & 1.62 \\

    \midrule
    \multirow{4}{*}{2($\sigma^*$)}
    &  & 32  & 1.38e-4 & --   & 1.13e-5 & --   &  & 16  & 1.57e-5 & --   & 5.63e-7 & -- \\
    &  & 64  & 7.03e-5 & 0.97 & 4.07e-6 & 1.47 &  & 32  & 4.12e-6 & 1.93 & 7.39e-8 & 2.93 \\
    &  & 128 & 3.56e-5 & 0.98 & 1.42e-6 & 1.52 &  & 64  & 1.07e-6 & 1.95 & 9.47e-9 & 2.96 \\
    &  & 256 & 1.80e-5 & 0.99 & 4.89e-7 & 1.54 &  & 128 & 2.75e-7 & 1.96 & 1.20e-9 & 2.98 \\
    \midrule
    \multirow{4}{*}{2.1}
    &  & 32  & 1.34e-4 & --   & 1.08e-5 & --   &  & 16  & 1.54e-5 & --   & 6.34e-7 & --  \\
    &  & 64  & 6.80e-5 & 0.98 & 3.88e-6 & 1.47 &  & 32  & 4.00e-6 & 1.95 & 8.54e-8 & 2.89  \\
    &  & 128 & 3.43e-5 & 0.99 & 1.36e-6 & 1.51 &  & 64  & 1.02e-6 & 1.97 & 1.10e-8 & 2.96  \\
    &  & 256 & 1.72e-5 & 0.99 & 4.68e-7 & 1.54 &  & 128 & 2.59e-7 & 1.98 & 1.38e-9 & 2.99  \\
    \bottomrule
  \end{tabular}
\end{table}

\begin{table}[H]
  \caption{$ \gamma = 1.6$, $n=4$, $h=1/16$.}
  \label{tab:ex3-2}
  \small\setlength{\tabcolsep}{3pt}
  \begin{tabular}{ccccccccccccc}
    \toprule
    \multirow{2}{*}{$\sigma$} &&   \multicolumn{5}{c}{$m=1$, $r=1.15$}  && \multicolumn{5}{c}{$m=2$, $r=1.65$}    \\
    \cline{3-7} \cline{9-13}
    && $J$ & $\mathcal E_1(U)$ & Order             & $\mathcal E_2(U)$ & Order
    &     & $J$               & $\mathcal E_1(U)$ & Order             & $\mathcal E_2(U)$ & Order \\
    \midrule
    \multirow{4}{*}{1}

    && 32  & 5.66e-4  &--   & 1.83e-4  &--   &&     32    & 4.03e-5 & --  & 2.76e-6 & -- \\
    
&& 64  & 3.61e-4  &0.65 & 9.12e-5  &1.01 &&     64    & 1.82e-5 & 1.15  & 8.28e-7 & 1.74 \\
    
&& 128 & 2.29e-4  &0.65 & 4.29e-5  &1.09 &&     128   & 8.19e-6 & 1.15  & 2.58e-7 & 1.68 \\
    
&& 256 & 1.46e-4  &0.65 & 1.97e-5  &1.12 &&    256   & 3.69e-6 & 1.15  & 8.18e-8 & 1.66 \\

    \midrule
    \multirow{4}{*}{2($\sigma^*$)}
    && 32  & 1.92e-4  &--   & 4.61e-5  &--   && 16  & 1.50e-5  &--   & 8.75e-7  & -- \\
    && 64  & 9.60e-5  &1.00 & 1.84e-5  &1.32 && 32  & 3.90e-6  &1.94 & 1.08e-7  &3.01 \\
    && 128 & 4.79e-5  &1.00 & 7.21e-6  &1.35 && 64  & 9.98e-7  &1.97 & 1.36e-8  &2.99 \\
    && 256 & 2.39e-5  &1.00 & 2.78e-6  &1.37 && 128 & 2.55e-7  &1.97 & 1.71e-9  &2.99 \\
    \midrule
    \multirow{4}{*}{2.1}
    && 32  & 1.89e-4  &--   & 4.49e-5  &--   && 16  & 1.50e-5  &--   & 9.68e-7  &--  \\
    && 64  & 9.41e-5  &1.00 & 1.80e-5  &1.32 && 32  & 3.86e-6  &1.96 & 1.25e-7  &2.95  \\
    && 128 & 4.69e-5  &1.01 & 7.05e-6  &1.35 && 64  & 9.81e-7  &1.98 & 1.58e-8  &2.98  \\
    && 256 & 2.33e-5  &1.01 & 2.72e-6  &1.37 && 128 & 2.47e-7  &1.99 & 1.99e-9  &2.99  \\
    \bottomrule
  \end{tabular}
\end{table}

\appendix
\section{Properties of Fractional Calculus Operators}

\begin{lem}[\cite{Samko1993, Diethelm2010,Podlubny1998}]
  \label{lem:basic-frac}
  Let $ -\infty < a < b < \infty $. If $ 0 < \alpha < \beta < \infty $, then
  \begin{align*}
    & \I_{a+}^\alpha \I_{a+}^\beta = \I_{a+}^{\alpha+\beta}, \quad
    \I_{b-}^\alpha \I_{b-}^\beta= \I_{b-}^{\alpha+\beta}, \\
    & \D_{a+}^\beta I_{a+}^\alpha = \D_{a+}^{\beta-\alpha}, \quad
    \D_{b-}^\beta I_{b-}^\alpha = \D_{b-}^{\beta-\alpha}.
  \end{align*}
\end{lem}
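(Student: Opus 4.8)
The plan is to establish the semigroup law for the left-sided Riemann--Liouville integral first, and then to read off the two remaining identities from it. For $v \in L^1(a,b;X)$ I would write, directly from the definition,
\[
  \big(\I_{a+}^\alpha \I_{a+}^\beta v\big)(t) =
  \frac1{\Gamma(\alpha)\Gamma(\beta)}
  \int_a^t (t-s)^{\alpha-1} \int_a^s (s-\tau)^{\beta-1} v(\tau)
  \,\mathrm d\tau \,\mathrm ds,
\]
and interchange the two integrations over the triangle $\{a<\tau<s<t\}$. This is legitimate by Fubini's theorem for Bochner integrals, since $\I_{a+}^\beta v \in L^1(a,b;X)$ (a standard consequence of Young's convolution inequality) and the kernel $(t-s)^{\alpha-1}$ is integrable on $(a,t)$, so the iterated integral of the norms is finite. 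After the swap the inner integral is $\int_\tau^t (t-s)^{\alpha-1}(s-\tau)^{\beta-1}\,\mathrm ds$, and the change of variable $s = \tau + (t-\tau)\xi$ converts it to $(t-\tau)^{\alpha+\beta-1}B(\beta,\alpha) = (t-\tau)^{\alpha+\beta-1}\Gamma(\alpha)\Gamma(\beta)/\Gamma(\alpha+\beta)$. Substituting back yields precisely $\big(\I_{a+}^{\alpha+\beta}v\big)(t)$. The identity for $\I_{b-}$ then follows at once by the reflection $t\mapsto a+b-t$, which intertwines $\I_{a+}$ and $\I_{b-}$ of the same order.

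For the composition rule, let $j\in\mathbb N_{>0}$ be such that $j-1<\beta<j$, so that $\D_{a+}^\beta = \D^j \I_{a+}^{j-\beta}$ by definition. Applying the semigroup law just proved (both exponents $j-\beta$ and $\alpha$ are positive) gives
\[
  \D_{a+}^\beta \I_{a+}^\alpha = \D^j \I_{a+}^{j-\beta}\I_{a+}^\alpha
  = \D^j \I_{a+}^{\,j-(\beta-\alpha)}.
\]
Because $0<\beta-\alpha<\beta<j$, the integer $j'$ with $j'-1<\beta-\alpha<j'$ satisfies $1\le j'\le j$; splitting $\I_{a+}^{\,j-(\beta-\alpha)} = \I_{a+}^{\,j-j'}\I_{a+}^{\,j'-(\beta-\alpha)}$ (semigroup law again, both exponents nonnegative) and invoking the elementary identity $\D^{j-j'}\I_{a+}^{\,j-j'} = I$ — the iterated integral from $a$, differentiated the same number of times, is the identity in the distributional sense — we obtain
\[
  \D^j \I_{a+}^{\,j-(\beta-\alpha)}
  = \D^{j'}\!\big(\D^{j-j'}\I_{a+}^{\,j-j'}\big)\I_{a+}^{\,j'-(\beta-\alpha)}
  = \D^{j'}\I_{a+}^{\,j'-(\beta-\alpha)} = \D_{a+}^{\beta-\alpha}.
\]
If $\beta-\alpha$ happens to be a positive integer one interprets $\D_{a+}^{\beta-\alpha}$ as the classical derivative of that order and the same bookkeeping applies. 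The $\D_{b-}$ identity follows once more by the reflection argument.

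The only points needing genuine care are the justification of the interchange of integrals in the Bochner-valued setting — routine, once one records $\I_{a+}^\beta v\in L^1(a,b;X)$ — and keeping the two ceilings $j$ and $j'$ consistent in the derivative step, in particular the inequality $j'\le j$ that allows the surplus integer-order integrations to be cancelled against differentiations. Since all of this is classical, I would keep the exposition brief, presenting the Fubini/Beta-function computation in full and otherwise deferring to \cite{Samko1993,Diethelm2010,Podlubny1998}.
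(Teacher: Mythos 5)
Your argument is correct: the Fubini/Beta-function computation for the semigroup law, the reflection $t\mapsto a+b-t$ to transfer it to $\I_{b-}^\alpha$, and the cancellation $\D^{j-j'}\I_{a+}^{j-j'}=I$ to reduce $\D^{j}\I_{a+}^{j-(\beta-\alpha)}$ to $\D^{j'}\I_{a+}^{j'-(\beta-\alpha)}$ are all sound, and you correctly flag the only delicate points (integrability of $\I_{a+}^{\beta}v$ for the Bochner--Fubini step, and the bookkeeping $j'\leqslant j$). The paper itself gives no proof of this lemma --- it is quoted directly from \cite{Samko1993,Diethelm2010,Podlubny1998} --- and your write-up is precisely the standard argument found in those references, so deferring to them as you propose is exactly what the authors do.
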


\begin{lem}[\cite{Ervin2006}]
  \label{lem:coer}
  Assume that $ -\infty < a < b < \infty $ and $ 0 < \alpha < 1/2 $. If $ v \in
  H^\alpha(a,b) $, then
  \begin{align*}
    & \nm{\D_{a+}^\alpha v}_{L^2(a,b)} \leqslant \snm{v}_{H^\alpha(a,b)}, \\
    & \nm{\D_{b-}^\alpha v}_{L^2(a,b)} \leqslant \snm{v}_{H^\alpha(a,b)}, \\
    & \dual{\D_{a+}^\alpha v, \D_{b-}^\alpha v}_{(a,b)}
    = \cos(\alpha\pi) \snm{v}_{H^{\alpha}(a,b)}^2, \\
    & \dual{\D_{a+}^\alpha v, \D_{b-}^\alpha w}_{(a,b)} \leqslant
    \snm{v}_{H^\alpha(a,b)} \snm{w}_{H^\alpha(a,b)}, \\
    & \dual{\D_{a+}^{2\alpha} v, w}_{H^\alpha(a,b)} =
    \dual{\D_{a+}^\alpha v, \D_{b-}^\alpha w}_{(a,b)} =
    \dual{\D_{b-}^{2\alpha} w, v}_{H^\alpha(a,b)}.
  \end{align*}
\end{lem}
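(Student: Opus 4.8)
The plan is to transport everything to the whole real line by zero‑extension and then read off the five assertions from the Fourier side. Extend $v$ and $w$ by zero to all of $\mathbb R$ and call the extensions $\widetilde v,\widetilde w$. Because $0<\alpha<1/2$ these extensions lie in $H^\alpha(\mathbb R)$ and $\snm{\widetilde v}_{H^\alpha(\mathbb R)}=\snm v_{H^\alpha(a,b)}$; indeed this is essentially the definition of the seminorm recalled in \cref{sec:pre}, namely $\snm v_{H^\alpha(a,b)}=\bigl(\int_{\mathbb R}\snm{\xi}^{2\alpha}\snm{\mathcal F(v\chi_{(a,b)})(\xi)}^2\,\mathrm{d}\xi\bigr)^{1/2}$. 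Comparing the defining integrals, $\I_{a+}^{1-\alpha}v$, $\I_{b-}^{1-\alpha}w$ and $\I_{a+}^{1-2\alpha}v$ agree on $(a,b)$ with $\I_{-\infty+}^{1-\alpha}\widetilde v$, $\I_{\infty-}^{1-\alpha}\widetilde w$ and $\I_{-\infty+}^{1-2\alpha}\widetilde v$ respectively; since the classical first derivative is local, $\D_{a+}^\alpha v$, $\D_{b-}^\alpha w$ and $\D_{a+}^{2\alpha}v$ coincide on $(a,b)$ with the restrictions of the Fourier‑multiplier operators on $\mathbb R$ with symbols $(\mathrm{i}\xi)^{\alpha}$, $(-\mathrm{i}\xi)^{\alpha}$ and $(\mathrm{i}\xi)^{2\alpha}$ (principal branch) applied to the relevant extension. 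The only arithmetic needed is $\snm{(\mathrm{i}\xi)^\alpha}=\snm{(-\mathrm{i}\xi)^\alpha}=\snm{\xi}^\alpha$ together with $\overline{(-\mathrm{i}\xi)^\alpha}=(\mathrm{i}\xi)^\alpha$, so that $(\mathrm{i}\xi)^\alpha\,\overline{(-\mathrm{i}\xi)^\alpha}=(\mathrm{i}\xi)^{2\alpha}=\snm{\xi}^{2\alpha}e^{\mathrm{i}\alpha\pi\operatorname{sgn}\xi}$.

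With this in hand the first two bounds are immediate: the multiplier operator with symbol $(\mathrm{i}\xi)^\alpha$ maps $\widetilde v$ to an $L^2(\mathbb R)$ function whose norm, by Plancherel, is $\bigl(\int_{\mathbb R}\snm{\xi}^{2\alpha}\snm{\widehat{\widetilde v}}^2\bigr)^{1/2}=\snm v_{H^\alpha(a,b)}$, and restriction to $(a,b)$ only decreases the $L^2$ norm; the estimate for $\D_{b-}^\alpha$ is the mirror image. For the inner products the key point is a support property: since $\widetilde v$ vanishes on $(-\infty,a)$ so do $\I_{-\infty+}^{1-\alpha}\widetilde v$ and hence $\D_{-\infty+}^\alpha\widetilde v$, and symmetrically $\D_{\infty-}^\alpha\widetilde w$ vanishes on $(b,\infty)$; hence the product of the two whole‑line derivatives is supported in $[a,b]$, where it equals $(\D_{a+}^\alpha v)(\D_{b-}^\alpha w)$, and Parseval gives
\begin{align*}
  \dual{\D_{a+}^\alpha v,\D_{b-}^\alpha w}_{(a,b)}
  &=\int_{\mathbb R}(\mathrm{i}\xi)^\alpha\,\overline{(-\mathrm{i}\xi)^\alpha}\,
     \widehat{\widetilde v}(\xi)\,\overline{\widehat{\widetilde w}(\xi)}\,\mathrm{d}\xi \\
  &=\int_{\mathbb R}\snm{\xi}^{2\alpha}\,e^{\mathrm{i}\alpha\pi\operatorname{sgn}\xi}\,
     \widehat{\widetilde v}(\xi)\,\overline{\widehat{\widetilde w}(\xi)}\,\mathrm{d}\xi .
\end{align*}
When $w=v$ is real, $\snm{\widehat{\widetilde v}}^2$ is even in $\xi$ while $\sin(\alpha\pi\operatorname{sgn}\xi)$ is odd, so the imaginary part integrates to zero and the integral reduces to $\cos(\alpha\pi)\int_{\mathbb R}\snm{\xi}^{2\alpha}\snm{\widehat{\widetilde v}}^2=\cos(\alpha\pi)\snm v_{H^\alpha(a,b)}^2$; for general $v,w$ the integrand has modulus $\snm{\xi}^{2\alpha}\snm{\widehat{\widetilde v}}\,\snm{\widehat{\widetilde w}}$, whence Cauchy--Schwarz gives $\dual{\D_{a+}^\alpha v,\D_{b-}^\alpha w}_{(a,b)}\le\snm v_{H^\alpha(a,b)}\snm w_{H^\alpha(a,b)}$. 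Repeating the same support‑and‑Parseval step for $\D_{a+}^{2\alpha}v$ (multiplier $(\mathrm{i}\xi)^{2\alpha}$, naturally an element of $H^{-\alpha}(a,b)$) yields $\dual{\D_{a+}^{2\alpha}v,w}_{H^\alpha(a,b)}=\int_{\mathbb R}(\mathrm{i}\xi)^{2\alpha}\widehat{\widetilde v}\,\overline{\widehat{\widetilde w}}$, which by the symbol identity above equals $\dual{\D_{a+}^\alpha v,\D_{b-}^\alpha w}_{(a,b)}$; the mirror computation with $a+$ and $b-$ interchanged gives the last equality.

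I expect the real work to lie not in the computation but in the supporting bookkeeping in this low‑regularity setting. One has to (i) pin down the branch and the Fourier normalization so that $\I_{-\infty+}^{\alpha}$ and $\I_{\infty-}^{\alpha}$ genuinely carry the symbols $(\mathrm{i}\xi)^{-\alpha}$ and $(-\mathrm{i}\xi)^{-\alpha}$ — consistent with \cref{lem:basic-frac} — and check that this composes correctly with $\D$ and with the $L^2$‑identification of the whole‑line Riemann–Liouville derivative as that multiplier; (ii) justify that zero‑extension preserves $H^\alpha$ together with its seminorm, which is exactly where $\alpha<1/2$ enters and is equivalent to the density of $C_c^\infty(a,b)$ in $H^\alpha(a,b)$; and (iii) make rigorous sense of $\D_{a+}^{2\alpha}v\in H^{-\alpha}(a,b)$ and of the pairing $\dual{\cdot,\cdot}_{H^\alpha(a,b)}$, which is cleanest to do first for $v,w\in C_c^\infty(a,b)$ and then pass to the limit by density. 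Since the statement is quoted from \cite{Ervin2006}, one may alternatively simply invoke it there.
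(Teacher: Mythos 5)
The paper does not prove this lemma at all---it is quoted directly from \cite{Ervin2006}---so there is no internal proof to compare against. Your zero-extension and Fourier-multiplier argument (symbols $(\mathrm{i}\xi)^\alpha$ and $(-\mathrm{i}\xi)^\alpha$, the support observation that makes the pairing over $(a,b)$ equal to the whole-line pairing, and the identity $(\mathrm{i}\xi)^\alpha\overline{(-\mathrm{i}\xi)^\alpha}=\snm{\xi}^{2\alpha}e^{\mathrm{i}\alpha\pi\operatorname{sgn}\xi}$ giving the $\cos(\alpha\pi)$ factor) is correct, and it is essentially the standard proof given in the cited reference; the bookkeeping points you flag in (i)--(iii) are exactly the ones handled there, with $\alpha<1/2$ entering through the norm equivalence recorded in \cref{lem:equiv}.
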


\begin{lem}
  \label{lem:43}
  Suppose that $ -\infty < a < b < \infty $ and $ 0 < \alpha < 1/2 $. If $ v,w \in
  H^\alpha(a,b) $, then
  \[
    \dual{\D_{a+}^\alpha v, \D_{b-}^\alpha w}_{(a,b)} \leqslant
    \snm{v}_{H^\alpha(a,b)} \ssnm{w}_{H^\alpha(a,b)}.
  \]
\end{lem}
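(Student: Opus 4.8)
The plan is to reduce the estimate, via a zero extension of $v$ and an arbitrary admissible extension of $w$, to the whole‑line counterpart of the fourth bound in \cref{lem:coer}, which is nothing but Plancherel's identity followed by the Cauchy--Schwarz inequality.

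First, fix any $\widetilde w\in H^\alpha((-\infty,b))$ with $\widetilde w|_{(a,b)}=w$. Since the infimum defining $\ssnm{w}_{H^\alpha(a,b)}$ is taken precisely over such $\widetilde w$, it suffices to prove
\[
  \dual{\D_{a+}^\alpha v,\D_{b-}^\alpha w}_{(a,b)}
  \leqslant \snm{v}_{H^\alpha(a,b)}\,\snm{\widetilde w}_{H^\alpha((-\infty,b))},
\]
and then pass to the infimum over $\widetilde w$. To that end, let $\widetilde v$ be the extension of $v$ by zero to $\mathbb R$ and let $\widetilde W:=\widetilde w\chi_{(-\infty,b)}$, regarded as a function on $\mathbb R$. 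Because $0<\alpha<1/2$, both $\widetilde v$ and $\widetilde W$ lie in $H^\alpha(\mathbb R)$, and directly from the Fourier definition of the seminorm one has $\snm{\widetilde v}_{H^\alpha(\mathbb R)}=\snm{v}_{H^\alpha(a,b)}$ and $\snm{\widetilde W}_{H^\alpha(\mathbb R)}=\snm{\widetilde w}_{H^\alpha((-\infty,b))}$.

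Next I rewrite the left-hand pairing over $(a,b)$ as a pairing over $\mathbb R$ of the whole-line Riemann--Liouville derivatives of $\widetilde v$ and $\widetilde W$. Since $\widetilde v$ vanishes on $(-\infty,a)$, the integral $\I_{-\infty+}^{1-\alpha}\widetilde v$ vanishes there and coincides with $\I_{a+}^{1-\alpha}v$ on $(a,\infty)$; as $\I^{1-\alpha}$ sends $L^1$ functions to continuous functions that vanish at their base point, no Dirac mass appears at $t=a$, so $\D_{-\infty+}^\alpha\widetilde v$ agrees with $\D_{a+}^\alpha v$ on $(a,b)$ and vanishes on $(-\infty,a)$. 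Symmetrically, because $\widetilde W$ equals $w$ on $(a,b)$ and vanishes on $(b,\infty)$, the whole-line right derivative $\D_{\infty-}^\alpha\widetilde W$ agrees with $\D_{b-}^\alpha w$ on $(a,b)$ and vanishes on $(b,\infty)$. Hence the product $(\D_{-\infty+}^\alpha\widetilde v)(\D_{\infty-}^\alpha\widetilde W)$ is supported in $(a,b)$, where it equals $(\D_{a+}^\alpha v)(\D_{b-}^\alpha w)$, so that
\[
  \dual{\D_{a+}^\alpha v,\D_{b-}^\alpha w}_{(a,b)}
  =\dual{\D_{-\infty+}^\alpha\widetilde v,\D_{\infty-}^\alpha\widetilde W}_{\mathbb R}.
\]
Now $\mathcal F(\D_{-\infty+}^\alpha\widetilde v)(\xi)=(\mathrm i\xi)^\alpha\mathcal F\widetilde v(\xi)$ and $\mathcal F(\D_{\infty-}^\alpha\widetilde W)(\xi)=(-\mathrm i\xi)^\alpha\mathcal F\widetilde W(\xi)$, both in $L^2(\mathbb R)$ since $|\xi|^\alpha\mathcal F\widetilde v$ and $|\xi|^\alpha\mathcal F\widetilde W$ are; so by Plancherel's identity the pairing equals $\int_\mathbb R(\mathrm i\xi)^\alpha\overline{(-\mathrm i\xi)^\alpha}\,\mathcal F\widetilde v(\xi)\,\overline{\mathcal F\widetilde W(\xi)}\,\mathrm d\xi$, whose modulus is at most
\[
  \int_\mathbb R|\xi|^{2\alpha}\snm{\mathcal F\widetilde v(\xi)}\,\snm{\mathcal F\widetilde W(\xi)}\,\mathrm d\xi
  \leqslant\snm{\widetilde v}_{H^\alpha(\mathbb R)}\,\snm{\widetilde W}_{H^\alpha(\mathbb R)}
  =\snm{v}_{H^\alpha(a,b)}\,\snm{\widetilde w}_{H^\alpha((-\infty,b))}
\]
by Cauchy--Schwarz. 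Taking the infimum over $\widetilde w$ yields the lemma, with no constant, exactly as stated.

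The only point that demands care is the bookkeeping in the second step --- verifying that zero extension and restriction commute with the fractional integrals, hence with the fractional derivatives, without producing spurious endpoint terms; this is routine because $\I^{1-\alpha}$ maps $L^1$ into $C$ and annihilates its base point. An alternative route that avoids whole-line operators is to truncate: for each $c<a$ apply \cref{lem:coer} verbatim on the bounded interval $(c,b)$ to the zero extension of $v$ and the restriction $\widetilde w|_{(c,b)}$, which gives the bound $\snm{v}_{H^\alpha(a,b)}\snm{\widetilde w|_{(c,b)}}_{H^\alpha(c,b)}$, and then let $c\to-\infty$, using that $\snm{\widetilde w\chi_{(c,b)}-\widetilde w\chi_{(-\infty,b)}}_{H^\alpha(\mathbb R)}\to0$ (density of $C_c^\infty$ in $H^\alpha(\mathbb R)$ together with the boundedness of multiplication by a half-line indicator on $H^\alpha(\mathbb R)$ for $0<\alpha<1/2$). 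Either way there is no genuine difficulty.
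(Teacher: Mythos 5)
Your proposal is correct and is essentially the paper's argument: the paper's one-line proof (``by the definition of $\ssnm{\cdot}_{H^\alpha(a,b)}$, a direct consequence of \cref{lem:coer}'') is exactly the extension-plus-duality step you carry out, with your Plancherel computation being the standard proof of the fourth inequality in \cref{lem:coer} transplanted to the extended domain, and your truncation variant being the literal ``apply \cref{lem:coer}'' reading. You simply supply the bookkeeping (zero extension of $v$, restriction properties of $\I^{1-\alpha}$, absence of endpoint Dirac masses) that the paper leaves implicit.
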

\begin{proof}
  By the definition of $ \ssnm{\cdot}_{H^\alpha(a,b)} $, this lemma is a direct
  consequence of \cref{lem:coer}.
\end{proof}

\begin{lem}
  \label{lem:regu}
  If $ \alpha \in [0,1) \setminus \{0.5\} $ and $ 0 < \beta < \infty $, then
  \begin{equation}
    \label{eq:regu}
    \nm{\I_{1-}^\beta v}_{H^{\alpha+\beta}(0,1)} \leqslant
    C_{\alpha,\beta} \nm{v}_{H^\alpha(0,1)}
  \end{equation}
  for all $ v \in H_0^\alpha(0,1) $.
\end{lem}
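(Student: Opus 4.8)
The plan is to realise $\I_{1-}^\beta$, up to restriction, as a Fourier multiplier of order $-\beta$ on the whole line. First I would extend $v\in H_0^\alpha(0,1)$ by zero to a function $\widetilde v$ on $\mathbb R$; since $\alpha\neq1/2$, extension by zero is bounded $H_0^\alpha(0,1)\to H^\alpha(\mathbb R)$, so $\nm{\widetilde v}_{H^\alpha(\mathbb R)}\lesssim\nm{v}_{H^\alpha(0,1)}$ (see \cite{Tartar2007}), and, $\widetilde v$ being supported in $[0,1]$, its Fourier transform obeys $\nm{\widehat{\widetilde v}}_{L^\infty(\mathbb R)}\lesssim\nm{\widetilde v}_{L^1(\mathbb R)}\lesssim\nm{v}_{L^2(0,1)}$. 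Because $\widetilde v\equiv0$ on $(1,\infty)$, for every $t\in(0,1)$ we have $(\I_{1-}^\beta v)(t)=(\I_{\infty-}^\beta\widetilde v)(t)$, where $\I_{\infty-}^\beta w(t):=\tfrac1{\Gamma(\beta)}\int_t^\infty(s-t)^{\beta-1}w(s)\,\mathrm ds$, and the Fourier symbol of $\I_{\infty-}^\beta$ has modulus $\snm{\xi}^{-\beta}$.

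Consider first $0<\beta<1/2$. Then $\I_{\infty-}^\beta\widetilde v\in L^2(\mathbb R)$, because its left tail decays like $\snm{t}^{\beta-1}$ at $-\infty$ and it has at worst the integrable singularity $(1-t)^{\beta-1/2}$ at $t=1$; hence, by boundedness of restriction $H^{\alpha+\beta}(\mathbb R)\to H^{\alpha+\beta}(0,1)$ and Plancherel,
\[
  \nm{\I_{1-}^\beta v}_{H^{\alpha+\beta}(0,1)}^2
  \lesssim \int_{\mathbb R}(1+\snm{\xi}^2)^{\alpha+\beta}\,\snm{\xi}^{-2\beta}\,\snm{\widehat{\widetilde v}(\xi)}^2 \,\mathrm d\xi .
\]
For $\snm{\xi}\geqslant1$ the weight $(1+\snm{\xi}^2)^{\alpha+\beta}\snm{\xi}^{-2\beta}$ is $\lesssim(1+\snm{\xi}^2)^{\alpha}$, so that part of the integral is $\lesssim\nm{\widetilde v}_{H^\alpha(\mathbb R)}^2\lesssim\nm{v}_{H^\alpha(0,1)}^2$; for $\snm{\xi}<1$ the weight is $\lesssim\snm{\xi}^{-2\beta}$, which is integrable since $\beta<1/2$, so that part is $\lesssim\nm{\widehat{\widetilde v}}_{L^\infty(\mathbb R)}^2\lesssim\nm{v}_{L^2(0,1)}^2$. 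This proves \cref{eq:regu} when $0<\beta<1/2$.

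For general $\beta>0$, write $\beta=N+\beta'$ with $N\in\mathbb N$ and $0<\beta'\leqslant1$, and if $\beta'>1/2$ split $\beta'=\beta_1+\beta_2$ with $\beta_1,\beta_2\in(0,1/2)$; by \cref{lem:basic-frac} this factors $\I_{1-}^\beta$ into $\I_{1-}^N$ composed with one or two fractional integrals of order below $1/2$. The operator $\I_{1-}^N$ is an $N$-fold antiderivative and is bounded $H^s(0,1)\to H^{s+N}(0,1)$ for every $s\geqslant0$, while each fractional factor is handled by the second paragraph \emph{provided} the intermediate function can be extended from $(0,1)$ to $\mathbb R$ with control of its $H^s$-norm. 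Since $\I_{1-}^{\beta_i}$ only integrates to the right, I would extend the intermediate function by a bounded Sobolev extension to $(-\tfrac12,1)$, multiply by a smooth cut-off vanishing near $-\tfrac12$, and extend by zero past $t=1$; the last step is admissible in $H^s(\mathbb R)$ because an elementary H\"older estimate on the kernel $(s-t)^{\beta_i-1}$ shows that $\I_{1-}^{\beta_i}w$ vanishes at $t=1$ whenever $w\in H^{\alpha'}(0,1)$ with $\alpha'+\beta_i>1/2$, while for $\alpha'+\beta_i<1/2$ no vanishing is needed. In this decomposition the fractional steps only involve Sobolev exponents below $3/2$, so only the value at $t=1$ must be matched, and the exponent $1/2$ can be avoided in the intermediate steps because $\alpha\neq1/2$. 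Iterating the estimate of the second paragraph finitely many times then gives \cref{eq:regu}.

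The main obstacle is the low-frequency behaviour when $\beta\geqslant1/2$: there $\I_{\infty-}^\beta\widetilde v$ no longer lies in $L^2(\mathbb R)$ (its left tail decays too slowly, or grows), so the one-shot Plancherel computation is unavailable, and one must either localise with a cut-off and argue with the distributional symbol or, as above, break $\beta$ into pieces below $1/2$ and iterate; the secondary point is keeping track of the vanishing of the intermediate functions at $t=1$ so that the zero-extension used at each step stays in the correct Sobolev space. Everything else is routine Fourier analysis combined with the semigroup property already recorded in \cref{lem:basic-frac}.
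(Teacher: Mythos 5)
Your argument is correct in substance but follows a genuinely different route from the paper's. For $0<\beta<1/2$ you prove the full $H^\alpha\to H^{\alpha+\beta}$ bound in a single Plancherel computation, splitting frequencies at $\snm{\xi}=1$ and absorbing the non-integrable symbol $\snm{\xi}^{-2\beta}$ at low frequencies into $\nm{\widehat{\widetilde v}}_{L^\infty}\lesssim\nm{v}_{L^2(0,1)}$; larger $\beta$ is then handled by factoring through \cref{lem:basic-frac} and iterating, which forces you to track whether each intermediate function can be extended by zero past $t=1$. The paper instead only ever runs the Fourier argument at the two endpoints $w\in L^2(0,1)$ and $w\in H_0^1(0,1)$ (the latter via the identity $\D\I_{1-}^{\beta/2}w=\I_{1-}^{\beta/2}w'$), and obtains every other exponent --- both the order $\beta$ in the composition $\I_{1-}^{\beta}=\I_{1-}^{\beta/2}\I_{1-}^{\beta/2}$ and the general source regularity $\alpha$ --- by interpolation, citing \cite[Lemma~22.3]{Tartar2007}. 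Interpolation is precisely what lets the paper bypass the endpoint bookkeeping that occupies your third paragraph, since it never needs to extend an intermediate function by zero. Your approach buys a self-contained, transparent estimate for each piece of order below $1/2$ at arbitrary $\alpha$; the paper's buys a much shorter composition step. Two loose ends in your version should be repaired, though neither requires a new idea: (i) the value $\beta'=1/2$ falls through your case split, since the low-frequency integral $\int_{\snm{\xi}<1}\snm{\xi}^{-2\beta'}\,\mathrm{d}\xi$ diverges there, so you must split $\beta'$ whenever $\beta'\geqslant 1/2$, not only when $\beta'>1/2$; (ii) pointwise vanishing of $\I_{1-}^{\beta_i}w$ at $t=1$ is necessary but not by itself sufficient for the zero extension to be bounded in $H^{s}$ when $s>1/2$ --- you also need the standard fact that for $1/2<s<3/2$ extension by zero is bounded on $\{w\in H^{s}(0,1):w(1)=0\}$, which is the same non-half-integer restriction you already invoked for $v$ itself and is compatible with your freedom to choose the split so that no intermediate exponent equals $1/2$ or $3/2$.
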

\begin{proof}
  The proof is a simple modification of that of \cite[Lemma 5.7]{Li2017A}. Let
  us first prove that
  \begin{equation}
    \label{eq:init}
    \nm{\I_{1-}^\beta w}_{H^\beta(0,1)}
    \leqslant C_\beta \nm{w}_{L^2(0,1)}
  \end{equation}
  for all $ w \in L^2(0,1) $ and $ 0 < \beta < 1 $. Extending $ w $ to $
  \mathbb R \backslash (0,1) $ by zero, we define
  \[
    G(t) := \frac1{\Gamma(\beta/2)} \int_t^\infty
    (s-t)^{\beta/2-1} w(s) \, \mathrm{d}s,
    \quad -\infty < t < \infty.
  \]
  Since $ 0 < \beta/2 < 1/2 $, a routine calculation yields $ G \in L^2(\mathbb
  R) $, and then \cite[Theorem~7.1]{Samko1993} implies
  \[
    \mathcal FG(\xi) = (-\mathrm{i}\xi)^{-\beta/2} \mathcal Fw(\xi),
    \quad -\infty < \xi < \infty,
  \]
  where $ \mathcal F: L^2(\mathbb R) \to L^2(\mathbb R) $ is the Fourier
  transform operator, and $ \mathrm{i} $ is the imaginary unit. From the
  well-known Plancherel Theorem it follows
  \[
    \nm{G}_{H^{\beta/2}(\mathbb R)}
    \leqslant C_\beta \nm{w}_{L^2(0,1)},
  \]
  and hence
  \begin{equation}
    \label{eq:regu-3}
    \nm{\I_{1-}^{\beta/2} w}_{H^{\beta/2}(0,1)}
    \leqslant C_\beta \nm{w}_{L^2(0,1)}.
  \end{equation}
  In addition, if $ w \in H_0^1(0,1) $ then, since
  \[
    \D \I_{1-}^{\beta/2} w = -\D \I_{1-}^{\beta/2} \I_{1-} w' =
    \I_{1-}^{\beta/2} w',
  \]
  the estimate \cref{eq:regu-3} implies
  \[
    \nm{ \I_{1-}^{\beta/2}w }_{ H^{1+\beta/2}(0, 1) }
    \leqslant C_\beta \nm{w}_{ H_0^1(0,1)}.
  \]
  Consequently, \cite[Lemma~22.3]{Tartar2007} yields
  \begin{equation}\label{eq:inter}
    \nm{ \I_{1-}^{\beta/2} w }_{ H^\beta(0,1) }
    \leqslant C_\beta \nm{  w }_{ H_0^{\beta/2}(0,1) }
    \quad \text{ for all } w \in H_0^{\beta/2}(0,1).
  \end{equation}
  Therefore, since $\I_{1-}^\beta w = \I_{1-}^{\beta/2} \I_{1-}^{\beta/2} w $,
  combining \cref{eq:regu-3,eq:inter} indicates that \cref{eq:init} holds for
  all $ w \in L^2(0,1) $ and $ 0 < \beta < 1 $.

  Next, let us proceed to prove \cref{eq:regu}. Since the case of $ \beta \in
  \mathbb N $ is trivial, we assume that $ k < \beta < k+1 $ with $ k \in
  \mathbb N $, and so it suffices to prove
  \begin{equation}
    \label{eq:103}
    \nm{\I_{1-}^{\beta-k} v}_{H^{\alpha+\beta-k}(0,1)}
    \leqslant C_{\alpha,\beta} \nm{v}_{H^\alpha(0,1)}.
  \end{equation}
  Since we have already prove that \cref{eq:init} holds for all $ w \in L^2(0,1)
  $ and $ 0 < \beta < 1 $, we obtain
  \begin{align*}
    \nm{\I_{1-}^{\beta-k} w}_{H^{\beta-k}(0,1)} \leqslant
    C_\beta \nm{w}_{L^2(0,1)} \quad \text{ for all } w \in L^2(0,1), \\
    \nm{\I_{1-}^{\beta-k} w}_{H^{1+\beta-k}(0,1)} \leqslant
    C_\beta \nm{w}_{H_0^1(0,1)} \quad \text{ for all } w \in H_0^1(0,1).
  \end{align*}
  Therefore, using \cite[Lemma~22.3]{Tartar2007} again proves \cref{eq:103} and
  thus concludes the proof of this lemma.
\end{proof}

\section{Three Inequalities}
\begin{lem}
  \label{lem:jm}
  Let $ 0\leqslant a < b < \infty $ and $ 0 < \alpha < 1 $. If $ v' \in L^2_{\delta}(a,b) $ with $ 0\leqslant \delta < 1 $ and $ v(b) = 0 $, then
  \begin{align}
    \int_a^b v^2(t) (t-a)^{-\alpha} \, \mathrm{d}t
    \leqslant
    \frac{b^{-\delta}}{(1-\delta)(1-\alpha)} (b-a)^{2-\alpha}
    \nm{v'}_{L_\delta^2(a,b)}^2,
    \label{eq:jm-1} \\
    \int_a^b v^2(t) (b-t)^{-\alpha} \, \mathrm{d}t \leqslant
    \frac{b^{-\delta}}{(1-\delta)(1-\alpha)} (b-a)^{2-\alpha}
    \nm{v'}_{L_\delta^2(a,b)}^2,
    \label{eq:jm-2} \\
    \int_a^b \, \mathrm{d}t \int_a^b
    \snm{v(s)-v(t)}^2 \snm{s-t}^{-1-\alpha} \, \mathrm{d}s
    \leqslant \frac{8b^{-\delta}}{1-\delta}(b-a)^{2-\alpha}
    \nm{v'}_{L_\delta^2(a,b)}^2.
    \label{eq:jm-3}
  \end{align}
\end{lem}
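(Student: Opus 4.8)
All three estimates rest on the same mechanism: since $v(b)=0$, for $t\in(a,b)$ we have $v(t)=-\int_t^b v'(r)\,\mathrm dr$, and for $s,t\in(a,b)$ we have $v(s)-v(t)=\int_t^s v'(r)\,\mathrm dr$, and in each case a weighted Cauchy--Schwarz inequality (splitting $v'=r^{-\delta/2}\cdot r^{\delta/2}v'$) expresses the left-hand side through $\nm{v'}_{L^2_\delta(a,b)}$. The one elementary fact I would isolate first is that, for $0\leqslant c<d<\infty$ and $0\leqslant\delta<1$,
\[
  \int_c^d r^{-\delta}\,\mathrm dr=\frac{d^{1-\delta}-c^{1-\delta}}{1-\delta}\leqslant\frac{d^{-\delta}}{1-\delta}(d-c),
\]
which reduces to $c^{1-\delta}\geqslant d^{-\delta}c$, i.e.\ to the concavity of $r\mapsto r^{1-\delta}$ (its graph lies above the chord through the origin and $(d,d^{1-\delta})$). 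This is exactly the orientation that produces the factor $d^{-\delta}$ rather than the larger pointwise value $c^{-\delta}$, and it stays valid on the whole range $0\leqslant\delta<1$.

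For \cref{eq:jm-1}, Cauchy--Schwarz and the displayed bound give $v(t)^2\leqslant\big(\int_t^b r^{-\delta}\,\mathrm dr\big)\nm{v'}_{L^2_\delta(a,b)}^2\leqslant\frac{b^{-\delta}(b-t)}{1-\delta}\nm{v'}_{L^2_\delta(a,b)}^2$; multiplying by $(t-a)^{-\alpha}$, integrating over $(a,b)$, and using $\int_a^b(b-t)(t-a)^{-\alpha}\,\mathrm dt=\frac{(b-a)^{2-\alpha}}{(1-\alpha)(2-\alpha)}\leqslant\frac{(b-a)^{2-\alpha}}{1-\alpha}$ gives exactly the stated constant. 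Estimate \cref{eq:jm-2} follows from the same pointwise bound on $v(t)^2$, now multiplied by $(b-t)^{-\alpha}$, together with $\int_a^b(b-t)^{1-\alpha}\,\mathrm dt=\frac{(b-a)^{2-\alpha}}{2-\alpha}\leqslant\frac{(b-a)^{2-\alpha}}{1-\alpha}$.

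For \cref{eq:jm-3}, I would first use the symmetry of the integrand to restrict to $t<s$, so that $|v(s)-v(t)|^2\leqslant\big(\int_t^s r^{-\delta}\,\mathrm dr\big)\big(\int_t^s r^\delta v'(r)^2\,\mathrm dr\big)\leqslant\frac{s^{-\delta}(s-t)}{1-\delta}\int_t^s r^\delta v'(r)^2\,\mathrm dr$. Substituting this into the double integral and exchanging the order of integration (Tonelli over the simplex $\{a<t<r<s<b\}$, integrating the factor $r^\delta v'(r)^2$ last) reduces matters to bounding the kernel integral $\int_a^r\mathrm dt\int_r^b s^{-\delta}(s-t)^{-\alpha}\,\mathrm ds$, uniformly in $r\in(a,b)$, by a fixed multiple of $b^{-\delta}(b-a)^{2-\alpha}$; this I would do by integrating in $t$ first and using subadditivity of $r\mapsto r^{1-\alpha}$ to get $(s-a)^{1-\alpha}-(s-r)^{1-\alpha}\leqslant(r-a)^{1-\alpha}$, and then applying the elementary fact above once more to $\int_r^b s^{-\delta}\,\mathrm ds$. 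Collecting the constants then yields \cref{eq:jm-3}.

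The routine part is the order-of-integration juggling and the constant bookkeeping. The point that needs a little care is arranging the estimates in \cref{eq:jm-3} so that the weight $r^\delta$ stays attached to $v'(r)^2$: if one instead pulls out the pointwise value $t^{-\delta}$ of the weight one ends up with $\nm{v'}_{L^2(a,b)}^2$ in place of $\nm{v'}_{L^2_\delta(a,b)}^2$, which is not what is claimed. A secondary subtlety is that the bounds must remain valid when $\alpha+\delta\geqslant1$, which rules out the tempting estimate $\int_t^s r^{-\delta}\,\mathrm dr\leqslant\frac{(s-t)^{1-\delta}}{1-\delta}$ (it would make the ensuing $s$-integral diverge) and forces the use of the $d^{-\delta}$-flavoured bound throughout.
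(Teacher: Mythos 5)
Your proofs of \cref{eq:jm-1} and \cref{eq:jm-2} are correct and essentially identical to the paper's (the paper simply relaxes $b-t$ to $b-a$ in the pointwise bound before integrating, so it never needs the Beta integral; both routes land on the stated constant). The elementary fact you isolate, $\int_c^d r^{-\delta}\,\mathrm dr\leqslant\frac{d^{-\delta}}{1-\delta}(d-c)$, is exactly the inequality the paper uses in the form $\frac{b^{1-\delta}-t^{1-\delta}}{1-\delta}\leqslant\frac{b^{-\delta}(b-a)}{1-\delta}$.

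For \cref{eq:jm-3}, however, your route does not deliver the constant that is claimed. Tracking your own steps: Cauchy--Schwarz contributes one factor $\frac1{1-\delta}$ (from $\int_t^s r^{-\delta}\,\mathrm dr\leqslant\frac{s^{-\delta}(s-t)}{1-\delta}$), the $t$-integration of $(s-t)^{-\alpha}$ contributes $\frac1{1-\alpha}$, and the final bound on $\int_r^b s^{-\delta}\,\mathrm ds$ contributes a second factor $\frac1{1-\delta}$. So the kernel integral satisfies $K(r)\leqslant\frac{b^{-\delta}(b-a)^{2-\alpha}}{(1-\alpha)(1-\delta)}$ and the whole argument yields
\begin{equation*}
  \int_a^b\!\!\int_a^b\snm{v(s)-v(t)}^2\snm{s-t}^{-1-\alpha}\,\mathrm ds\,\mathrm dt
  \leqslant\frac{2\,b^{-\delta}}{(1-\alpha)(1-\delta)^2}(b-a)^{2-\alpha}\nm{v'}_{L_\delta^2(a,b)}^2,
\end{equation*}
which is \emph{not} dominated by $\frac{8b^{-\delta}}{1-\delta}(b-a)^{2-\alpha}\nm{v'}_{L_\delta^2(a,b)}^2$ once $(1-\alpha)(1-\delta)<1/4$. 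The linear (not quadratic) dependence on $\frac1{1-\delta}$ is part of the statement and is what feeds the $\inf_{0\leqslant\delta<1}\frac{t_i^{-\delta}}{1-\delta}$ structure of \cref{lem:frac_appro}, so "collecting the constants" does not close the proof as written. The paper avoids both spurious factors by a different mechanism: after symmetrizing, it writes $\int_t^s v'=(s-t)\int_0^1 v'(t+\theta(s-t))\,\mathrm d\theta$, which turns $(s-t)^{-1-\alpha}$ into $(s-t)^{1-\alpha}\leqslant(b-a)^{1-\alpha}$ pointwise (no $\frac1{1-\alpha}$ ever appears), then applies Minkowski's integral inequality in $\theta$ and changes variables $\eta=t+\theta(s-t)$; the weight is attached by the crude insertion $1\leqslant(\eta/t)^\delta$, so the only $\frac1{1-\delta}$ comes from a single integral $\int_a^b t^{-\delta}\,\mathrm dt$, and $\bigl(\int_0^1\theta^{-1/2}\,\mathrm d\theta\bigr)^2=4$ together with the symmetrization factor $2$ produces the $8$. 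If you only need \cref{eq:jm-3} up to a constant $C_{\alpha,\delta}$ your argument is fine, but to prove the lemma as stated you should adopt the Minkowski/substitution device (or otherwise eliminate the extra $\frac1{(1-\alpha)(1-\delta)}$).
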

\begin{proof}
  The proof below shall be brief, since the techniques used are standard (see
  Minkowski's integral inequality and Hardy's inequality). For $ a < t < b $, a
  simple computing gives
  \begin{align*}
    {} &
    \snm{v(t)} \leqslant \int_t^b \snm{v'(s)} \, \mathrm{d}s \leqslant
    \left(
      \int_t^b s^{-\delta} \, \mathrm{d}s
    \right)^\frac12
    \left(
      \int_t^b s^\delta \snm{v'(s)}^2 \, \mathrm{d}s
    \right)^\frac12 \\
    \leqslant{} &
    \sqrt{
      \frac{b^{1-\delta} - t^{1-\delta}}{1-\delta}
    }
    \nm{v'}_{L_\delta^2(a,b)} \leqslant
    \sqrt{
      \frac{b^{-\delta}(b-a)}{1-\delta}
    } \nm{v'}_{L_\delta^2(a,b)},
  \end{align*}
  so that we obtain
  \begin{align*}
    {} &
    \int_a^b v^2(t) (t-a)^{-\alpha} \, \mathrm{d}t
    \leqslant   \frac{b^{-\delta}(b-a)}{1-\delta}
    \int_a^b  (t-a)^{-\alpha} \, \mathrm{d}t \,
    \nm{v'}_{L_\delta^2(a,b)}^2 \\
    = {} &
    \frac{b^{-\delta}(b- a)^{2-\alpha}}{(1-\delta)(1-\alpha)}
    \nm{v'}_{L_\delta^2(a,b)}^2,
  \end{align*}
  namely, estimate \cref{eq:jm-1}. Similarly, we have
  \begin{align*}
    {} &
    \int_a^b v^2(t) (b-t)^{-\alpha} \, \mathrm{d}t
    \leqslant\frac{b^{-\delta}(b-a)}{1-\delta}
    \int_a^b  (b-t)^{-\alpha}  \, \mathrm{d}t \,
    \nm{v'}_{L_\delta^2(a,b)}^2 \\
    = {} &
    \frac{b^{-\delta}(b- a)^{2-\alpha}}{(1-\delta)(1-\alpha)}
    \nm{v'}_{L_\delta^2(a,b)}^2,
  \end{align*}
  namely, estimate \cref{eq:jm-2}. Finally, let us prove \cref{eq:jm-3}. Since
  \begin{align*}
    {} &
    \int_a^b \, \mathrm{d}t
    \int_a^b \snm{v(s)-v(t)}^2 \snm{s-t}^{-1-\alpha} \, \mathrm{d}s \\
    ={} &
    2\int_a^b \, \mathrm{d}t
    \int_t^b \snm{\int_t^s v'(\tau) \, \mathrm{d}\tau}^2 (s-t)^{-1-\alpha}
    \, \mathrm{d}s \\
    ={} &
    2\int_a^b \, \mathrm{d}t
    \int_t^b \snm{\int_0^1 v'(t+\theta(s-t)) \, \mathrm{d}\theta}^2
    (s-t)^{1-\alpha} \, \mathrm{d}s \\
    \leqslant{} &
    2(b-a)^{1-\alpha} \int_{a}^{b}
    \left(
      \int_0^1 \sqrt{
        \int_t^b \snm{v'(t+\theta(s-t))}^2 \, \mathrm{d}s
      } \, \mathrm{d}\theta
    \right)^2 \, \mathrm{d} t \\
    ={} &
    2(b-a)^{1-\alpha} \int_a^b
    \left(
      \int_0^1 \sqrt{
        \int_t^{t+\theta(b-t)}
        \snm{v'(\eta)}^2 \theta^{-1} \,\mathrm{d}\eta
      } \,\mathrm{d}\theta
    \right)^2 \, \mathrm{d}t,
  \end{align*}
  the inequality \cref{eq:jm-3} is a direct consequence of
  \begin{align*}
    {} &
    \int_a^b \left(
      \int_0^1 \sqrt{
        \int_t^{t+\theta(b-t)}
        \snm{v'(\eta)}^2 \theta^{-1} \, \mathrm{d}\eta
      } \, \mathrm{d}\theta
    \right)^2 \, \mathrm{d}t \\
    \leqslant{} &
    \int_a^b \left(
      \int_0^1 \sqrt{
        \int_t^{t+\theta(b-t)}
        (\eta/t)^\delta \snm{v'(\eta)}^2 \theta^{-1} \, \mathrm{d}\eta
      } \,\mathrm{d}\theta
    \right)^2 \, \mathrm{d}t \\
    ={} &
    \int_a^b t^{-\delta} \left(
      \int_0^1 \theta^{-1/2} \sqrt{
        \int_t^{t+\theta(b-t)}
        \eta^\delta \snm{v'(\eta)}^2 \, \mathrm{d}\eta
      } \,\mathrm{d}\theta
    \right)^2 \, \mathrm{d}t \\
    \leqslant{} &
    \int_a^b t^{-\delta} \left(
      \int_0^1 \theta^{-1/2}
      \,\mathrm{d}\theta
    \right)^2 \, \mathrm{d}t \, \nm{v'}_{L_\delta^2(a,b)}^2 \\
    \leqslant{} &
    \frac{4b^{-\delta}(b-a)}{1-\delta}
    \nm{v'}_{L_\delta^2(a,b)}^2.
  \end{align*}
  This lemma is thus proved.
\end{proof}





\end{document}